  \newcommand{\bDoNotIncludePackages}{0}
  \newcommand{\bSkipDocumentSetting}{0}
  \newcommand{\bDoNotDefineTheorems}{0}
\def\Z{\mathbb Z}
\def\N{\mathbb N}
\def\A{\mathcal A}
\def\B{\mathcal B}
\def\C{\mathcal C}
\def\P{\mathcal P}
\def\P{\mathcal P}
\def\PT{{\mathcal P}_{\Theta}}
\def\L{\mathcal L}
\def\Lu{{\mathcal L}(\uu)}
\def\uu{\mathbf u}
\def\vv{\mathbf v}
\def\tt{\mathbf t}
\def\Pal{{\rm Pal}}
\def \id {{\rm Id}}
\def\PalT{{\rm Pal}_{\Theta}}
\def\PalG{{\rm Pal}_{G}}
\def \Rk#1 {$\mathcal{R}_{#1}$}
\def \Rext {{\rm Rext}}
\def \Lext {{\rm Lext}}
\def \Bext {{\rm Bext}}
\def \Pext {{\rm Pext}}
\def \PextT {\Pext_{\Theta}}
\def \b {{\rm b}}
\def \FC#1 {
\mathcal{C}
\ifthenelse{\equal{#1}{}}{}{(#1)}
}
\def \PC#1 {
\mathcal{P}_{\Theta}
\ifthenelse{\equal{#1}{}}{}{(#1)}
}
\def \PCn#1 {
\mathcal{P}
\ifthenelse{\equal{#1}{}}{}{(#1)}
}
\def \Tr {R}
\def \gT {\gamma_{\Theta}}
\def \Gg #1#2{#1\text{-}\textrm{tls}(#2)}
\def \Gc #1#2{#1\text{-}\textrm{crw}(#2)}
\def \Gl #1#2{#1\text{-}\textrm{lps}(#2)}
\newtheorem{thm}{Theorem}
\newtheorem{corollary}[thm]{Corollary}
\newtheorem{lem}[thm]{Lemma}
\newtheorem{prop}[thm]{Proposition}
\theoremstyle{definition}
\newtheorem{definition}[thm]{Definition}
\newtheorem{example}[thm]{Example}
\crefname{thm}{theorem}{theorems}
\crefname{thrm}{theorem}{theorems}
\crefname{coro}{corollary}{corollaries}
\crefname{example}{example}{examples}
\crefname{lem}{lemma}{lemmas}
\crefname{lmm}{lemma}{lemmas}
\crefname{claim}{claim}{claims}
\crefname{obs}{observation}{observations}
\crefname{proposition}{proposition}{propositions}
\crefname{prop}{proposition}{propositions}
\crefname{defi}{definition}{definitions}
\crefname{theorem}{theorem}{theorems}
\crefname{corollary}{corollary}{corollaries}
\crefname{example}{example}{examples}
\crefname{lemma}{lemma}{lemmas}
\crefname{proposition}{proposition}{propositions}
\crefname{definition}{definition}{definitions}
\theoremstyle{remark}
\newtheorem{remark}[thm]{Remark}
\crefname{example}{example}{examples}
\definecolor{sediva}{rgb}{0.85,0.85,0.85}
\begin{document}


\title{Palindromic richness for languages \\ invariant under more symmetries}

\author{Edita Pelantov\'a \\ Czech Technical University in Prague\\ Czech Republic \and \v St\v ep\'an Starosta \\ Czech Technical University in Prague \\ Czech Republic}

\date{}
\maketitle

\begin{abstract}
For a  given finite group $G$ consisting of morphisms and antimorphisms of a free monoid  $\mathcal{A}^*$,
we study  infinite words with language closed under the group $G$.
We focus on the notion of $G$-richness which describes words rich in generalized palindromic factors, i.e., in factors $w$ satisfying $\Theta(w) = w$ for some antimorphism $\Theta \in G$.
We give several equivalent descriptions which are generalizations of know characterizations of rich words (in the terms of classical palindromes) and show two examples of $G$-rich words.
\end{abstract}






\section{Introduction}

In \cite{DrJuPi}, Droubay et al. showed  that the number of
different palindromes occurring in a finite word $w$ cannot
exceed the bound $|w|+1$. If this  bound is met, the word $w$ is
called rich or rich in palindromes or full \cite{DrJuPi, BrHaNiRe}. An
infinite word $\uu$ is said to be rich if all of its factors are
rich. The list of the most prominent rich words contains
Arnoux-Rauzy words, see \cite{DrJuPi},  and words coding interval exchange with
symmetric permutation of intervals, see \cite{BaMaPe}. Note that Sturmian words
belong to both mentioned classes of words.

During the past two decades, the notion of \textit{palindromic richness}
showed to be fruitful. Application of \textit{palindromes} in physics of quasicrystals
(see for instance \cite{HoKnSi, DaLe_I}) and in genetics (see for instance \cite{KaMa}) served as
stimulating factor for research  in this area as well.
Restivo and Rosone \cite{ReRo09} showed a narrow connection of rich
periodic words with extremal cases of Burrows-Wheeler transform
which is used in  compression algorithms.
In \cite{ReRo}, the authors further refined the result.

Generalizations of rich words appeared soon. Instead of
classical palindromes defined as words invariant under the reversal
mapping one can consider \textit{\mbox{$\Theta$-palindromes}}, i.e., words
invariant under an involutive antimorphism $\Theta$.
For first appearance of the notion see \cite{KaMa2008},
where it appeared in the context of DNA, or \cite{LuLu},
where the name pseudopalindrome is also used.
 Words saturated by $\Theta$-palindromes up to the
highest possible level are called \textit{\mbox{$\Theta$-rich}}. Another kind
of generalization of rich words relaxes the requirement on the number of
palindromes occurring in any factor $w$. We say that infinite word
$\uu$ is \textit{almost rich} if there exists a constant $D$ such that any
factor $w$ of $\uu$ contains at least $|w|+1-D$ different
palindromes. The minimal constant $D$ with this property is referred
to as \textit{palindromic defect} and was introduced in \cite{BrHaNiRe}. Both
mentioned generalizations can be combined into the notion of \textit{almost
$\Theta$-rich} words. It follows directly from the definition that
almost $\Theta$-rich words contain infinitely many
$\Theta$-palindromes.

 As shown in \cite{PeSta1},
besides peculiar periodic words, no infinite uniformly recurrent words can be
simultaneously almost $\Theta_1$-rich and  almost $\Theta_2$-rich
for two distinct involutive antimorphisms $\Theta_1$ and
$\Theta_2$. The famous Thue-Morse word contains infinitely many
classical palindromes and $E$-palindromes, where $E$ is the antimorphism generated by interchange of symbols $0$ and $1$.
Nevertheless, the Thue-Morse word has no chance to be  almost rich
or almost $E$-rich.
 Therefore, the authors suggested in \cite{PeSta1} a further
generalization under the name of \textit{$G$-richness} and \textit{almost $G$-richness}. The new definition
of richness respects more antimorphisms of finite order under which
the language of an infinite word is invariant, the letter $G$
stands for the group generated by these antimorphisms. The
definition is based on the notion of \textit{graph of symmetries}, which is
assigned to an infinite word.   Adopting the new definition, the
second author showed in \cite{Sta2011} that all
generalized Thue-Morse words $\tt_{b,m}$
 are $I_2(m)$-rich, where $I_2(m)$ is a group isomorphic to the dihedral
group having $2m$ elements.

It turned out that words rich in the classical sense can be
characterized by using many other notions of combinatorics on words:
return word, bilateral order, longest palindromic suffix, factor and palindromic complexity, and super reduced Rauzy graphs. These
characterizations can be found in \cite{GlJuWiZa,BuLuGlZa,BaPeSta2}.
Analogous results for $\Theta$-rich words can be found in
\cite{Sta2010}. The aim of this article is to find $G$-analogies of
these characterizations.
They are stated as \Cref{CharakteristikaReturn,CharakteristikaLongestSuffix,CharakteristikaDG} and \Cref{G_rich_Pext_rovnosti}.
A consequence of these
characterizations is the fact that existence of a $G$-rich word
forces the group to be generated by involutive antimorphisms only.
This class of groups contains dihedral groups and, more generally, finite Coxeter groups.
The question whether there exists a $G$-rich word for any finite group $G$ such that it is generated by involutive antimorphisms, or it is at least a finite Coxeter group, remains open.
At the end of the article we present two examples of $G$-rich words.
The two examples are defined over alphabet of distinct sizes, nevertheless their groups of symmetries are mutually isomorphic, but not to a dihedral group.

The list of known examples of $G$-rich and almost $G$-rich words is very short.
One aim of this article is to trigger a search for such words.


\section{Preliminaries}
An \textit{alphabet} $\A$ is a finite set.
Elements of $\A$ are usually called \textit{letters}.
A \textit{finite word} $w$ over $\A$ is a finite string $w = w_1w_2\cdots w_n$
of letters $w_i \in \A$.
 Its length, denoted by $|w|$, is $n$.
The set of all finite words over $\A$ equipped
with the operation of concatenation is the free monoid
$\mathcal{A}^*$.
Its neutral element is the \textit{empty word }$\varepsilon$.
A word $v \in \mathcal{A}^*$ is a \textit{factor} of a word $w \in \mathcal{A}^*$ if there exist words $s,t\in \mathcal{A}^*$
such that $w=svt$. If $s=\varepsilon$, then $v$ is a \textit{prefix} of
$w$, if  $t=\varepsilon$, then $v$ is a \textit{suffix} of $w$.

\subsection{Antimorphisms and their fixed points}
A  mapping $\varphi$ on $\mathcal{A}^*$ is called
\begin{itemize}
\item a \textit{morphism} if $ \varphi(vw)=  \varphi(v) \varphi(w)$ for any
$v,w
 \in \mathcal{A}^*$;
\item an \textit{antimorphism} if $ \varphi(vw)=  \varphi(w) \varphi(v)$
for any $v,w \in \mathcal{A}^*$.
\end{itemize}
We denote the set of all morphisms and antimorphisms on
$\mathcal{A}^*$ by $AM({\mathcal{A}^*})$. Together with
composition, it forms a monoid with the identity mapping $\id$
as the unit element.
 The set of all morphisms, denoted by $M({\mathcal{A}^*})$, is a
submonoid of $AM({\mathcal{A}^*})$. The \textit{reversal} mapping $\Tr$
defined by $$\Tr(w_1w_2\cdots w_n) = w_nw_{n-1}\cdots w_2w_1 \quad \text{for all } w = w_1\cdots w_n \in \A^*$$ is
an involutive antimorphism, i.e., $\Tr^2 = \id$. It is obvious
that any antimorphism  is a composition of $\Tr$ and a morphism.
Thus
$$AM({\mathcal{A}^*})= M({\mathcal{A}^*}) \cup
\Tr \big( M({\mathcal{A}^*}) \big).$$

A fixed point of a given antimorphism $\Theta$  is called
\textit{\mbox{$\Theta$-palindrome}}, i.e., a word $w$ is a  \mbox{$\Theta$-palindrome} if
$w=\Theta(w)$. If $\Theta$ is the reversal mapping $\Tr$, we say
palindrome or classical palindrome  instead of $\Tr$-palindrome.
One can see that if $\Theta$ has a fixed point containing all the
letters of $\A$, then $\Theta$ is an involution, and thus a
composition of $\Tr$ and an involutive permutation of letters.

\subsection{Factor and palindromic complexities }

An \textit{infinite word} $\uu$ over an alphabet $\mathcal{A}$ is a
sequence $(u_n)_{n\in \mathbb{N}}\in \mathcal{A}^\mathbb{N}$.
We always implicitly suppose that $\A$ is the smallest
possible  alphabet for $\uu$, i.e., any letter of $\A$ occurs at
least once in $\uu$. Action of any morphism $\varphi\in   M({\mathcal{A}^*})$ can be naturally extended to infinite words
 by the  prescription
$$\varphi(\uu) = \varphi(u_0)\varphi(u_1) \varphi(u_2) \ldots \quad \text{ for all } \uu = (u_n)_{n\in \N} \in \A^{\N}. $$
 A finite word $w$ is a factor of $\uu$ if there exists an index
$i\in \mathbb{N}$, called \textit{occurrence} of $w$, such that $w=
u_iu_{i+1}\cdots u_{i+|w|-1}$. The set of all factors of $\uu$ of
length $n$ is denoted  $\mathcal{L}_n(\uu)$. The \textit{language} of an
infinite  word  $\uu$ is the set of all of its factors
$\mathcal{L}(\uu)= \cup_{n\in\mathbb{N}}\mathcal{L}_n(\uu)$. An
infinite word $\uu$ is \textit{recurrent} if any of  its factors has infinitely many occurrences in $\uu$. A factor $v\in\mathcal{L}(\uu)$ is a
\textit{complete  return word} of a factor $w$ if  $w$ occurs in $v$
exactly twice, as  a suffix and a prefix of $v$. A complete return
word $v$ of $w$ can be written as $v=qw$ for some factor
$q\neq\varepsilon$, which is usually called a \textit{return word} of $w$.
If any factor $w$ of $\uu$ has only finitely many return words,
then $\uu$ is said to be \textit{uniformly recurrent}.

The \textit{factor complexity} of $\uu$ is the mapping $\mathcal{C}:
\mathbb{N}\mapsto \mathbb{N}$ defined by the prescription
$$\mathcal{C}(n):=\# \mathcal{L}_n(\uu).$$
To evaluate the factor complexity of $\uu$, one may watch possible
prolongations of factors. A letter $a\in  \mathcal{A}$ is a \textit{left
extension} of a factor $w$ in $\uu$  if $aw$ belongs to
$\mathcal{L}(\uu)$. The set of all left extensions of $w$ is
denoted $\Lext (w)$. A factor $w \in \mathcal{L}(\uu)$ is called
\textit{left special} if  $\#\Lext (w)\geq 2$. Analogously, we define
\textit{right extension}, the set $\Rext (w)$, and \textit{right special}.  If
$w$ is right and left special, we call it \textit{bispecial}. The
first difference of the factor complexity of a recurrent word
$\uu$ satisfies
\begin{equation*}\label{DeltaC} \Delta \mathcal{C}(n) = \mathcal{C}(n+1) -
\mathcal{C}(n) = \sum_{w\in \mathcal{L}_n(\uu)}  \bigl(\# \Lext
(w) -1\bigr) =\sum_{w\in \mathcal{L}_n(\uu)}  \bigl(\# \Rext (w)
-1\bigr).
\end{equation*}
The second difference of factor complexity can be expressed using
the \textit{bilateral order} of a factor.
Let $w$ be a factor of $\uu$.
Its bilateral order is the quantity $\b(w): = 
\# \Bext(w) - \# \Lext (w) -\# \Rext (w)+1$,
where $\Bext(w) = \{ awb \ | \ awb \in \L(\uu), a,b \in \A\}$.
In \cite{Ca}, the formula
\begin{equation*}\label{DeltaNaDruhuC} \Delta^2 \mathcal{C}(n) = \Delta \mathcal{C}(n+1) -
\Delta \mathcal{C}(n) = \sum_{w\in \mathcal{L}_n(\uu)} \b(w)
\end{equation*}
is deduced.

The \textit{$\Theta$-palindromic complexity} of $\uu$ is the mapping
$\PT(n): \N \mapsto \N$ defined by
$$
\PT(n) := \# \{ w \in \L_n(\uu) \mid w = \Theta(w) \}.
$$
Similarly to factor  complexity, evaluation of palindromic
complexity is based on counting  possible extensions of
palindromes. For a $\Theta$-palindrome $w$, we denote by
$\Pext_\Theta(w)$ the set of all letters $  a \in \mathcal{A}$
such that $aw\Theta(a) \in \L(\uu)$. It is easy to see that


\begin{equation}\label{DeltaP} \PT(n+2) = \sum\limits_{\substack{w \in \L_n(\uu)\\ w=
\Theta(w)}} \#\Pext_{\Theta}(w).
\end{equation}


\subsection{Words with language closed under a group $G$ and $G$-richness}

In the rest of the article, the symbol $G$ stands exclusively for a subset of $AM({\mathcal{A}^*})$  satisfying the two following requirements:
\begin{enumerate}[i)]
\item \label{reqG1} $G$ is a finite group;
\item \label{reqG2} $G$ contains at least one antimorphism.
\end{enumerate}

The first requirement on $G$ implies the following for an element $\nu$ of $G$.
The element $\nu$ is non-erasing, i.e., $\nu(a) \neq \varepsilon$ for all $a \in \A$ (otherwise $\nu$ has no inverse in $G$).
Moreover, $\nu(a)$ is a letter for all $a \in \A$ (otherwise $\nu^n \neq \id$ for all $n \geq 1$).
We can conclude that $\nu$ restricted to $\A$ is a permutation of letters.

The second requirement on $G$ stems from the fact that our results are based on generalized palindromes and one gets only trivial or no results when dealing with groups consisting of morphisms only.
Since especially involutive antimorphisms are important in the study of generalized palindromes, 
by $G^{(2)}$ we denote the set of all involutive antimorphisms belonging to $G$. 

\begin{example}
Set $\A = \{0, 1\}$. Denote by $E$ the antimorphism determined by $0 \mapsto 1$ and $1 \mapsto 0$.
The only finite subgroups of $AM({\mathcal{A}^*})$ containing at least one antimorphism are
$$
G_1 = \{\id, \Tr\}, \ G_2 = \{\id, E\}, \text{ and } G_3 = \{\id,E,\Tr,E\Tr\}.
$$
In this case, we have
$$
G_1^{(2)} = \{\Tr\}, \ G_2^{(2)} = \{E\}, \text{ and } G_3^{(2)} = \{E,\Tr\}.
$$
\end{example}

The previous example shows that binary alphabet allows only abelian groups to satisfy \ref{reqG1}) and \ref{reqG2}).
On multiliteral alphabet, $G$ need not be abelian.

\begin{example}
Let $m$ be an integer such that $m \geq 3$.
Let $\pi \in S_m$ be a permutation of $\A = \{0,1,\ldots,m-1\}$.
Denote by $\mu_{\pi}$ the morphism on $\A^*$ induced by $\pi$, i.e., the restriction of $\mu_{\pi}$ to $\A$ is the permutation $\pi$ of $\A$.
Then the set
$$
G = \{ \mu_{\pi} \mid \pi \in S_m \} \cup \{ R\mu_{\pi} \mid \pi \in S_m \}
$$
is a group satisfying \ref{reqG1}) and \ref{reqG2}).
Clearly, $G$ is not abelian.
\end{example}

\begin{example} \label{ex:I2_3}
Let $\A = \{0, 1, 2\}$.
For all $k \in \A$ define the antimorphism $\Psi_k$ by $\Psi_k(\ell) = (k - \ell) \bmod{3}$ for all $\ell \in \A$.
By $\mu$ denote the morphism determined by $\mu(\ell) = (\ell - 1) \bmod{3}$ for all $\ell  \in \A$.
The set
$$
G = \{ \id, \mu, \mu^{-1}, \Psi_0, \Psi_1, \Psi_2 \}
$$
forms a non-abelian group satisfying \ref{reqG1}) and \ref{reqG2}) and not containing $R$.
\end{example}

Let us stress some aspects of such a group $G$ satisfying requirements \ref{reqG1}) and \ref{reqG2}):
\begin{enumerate}
\item every element of $G$ is either a morphism or an antimorphism determined by a permutation of letters of $\A$;
\item $G$ may contain elements of order greater than $2$;
\item $G$ need not be abelian;
\item \label{G-aspects-gens} the set of antimorphisms of $G$ generates the group $G$;
\item \label{G-aspects-no} the number of morphism in $G$ equals the number of antimorphisms in $G$;
\end{enumerate}

To prove the last two items it suffices to fix an antimorphism $\Theta \in G$.
Item \ref{G-aspects-gens} follows from the fact that given a morphism $\mu \in G$ one can write
 $\mu \Theta = \Theta'$ where $\Theta'$ is an antimorphism of $G$.
Thus, $\mu = \Theta' \Theta^{-1}$.
To show the last property, it suffices to show that the mapping from the set of morphisms of $G$ to the set of antimorphisms of $G$ defined by $\mu \mapsto \mu \Theta$ for all morphism $\mu \in G$ is a bijection.

We say that finite words $w, v\in
\A^*$ are \textit{$G$-equivalent} if there exists $\mu \in G$ such that
$w=\mu(v)$. The class of equivalence containing a word $w$ is
denoted
 $$ [w] := \left \{ \mu(w) \mid \mu
\in G \right \}.
$$
As already mentioned, since the group $G$ is
finite, any $\mu \in  G$ preserves  length  of words and thus
equivalent words have the same length.

 We say
that language $\L(\uu)$ of an infinite word $\uu \in
\A^\mathbb{N}$ is \textit{closed under $G$} if for any factor $w \in
\L(\uu)$ and any $\mu\in G$, the image   $\mu(w)$ belongs to
$\L(\uu)$ as well.  Since $G$ contains at least one antimorphism,
closedness of $\Lu$ under $G$ implies that $\uu$ is recurrent.

A useful tool to study language in combinatorics on words is \emph{Rauzy graph}.
Given a language $\L$, the Rauzy graph of order $n$ of the language $\L$ is a subgraph of $n$-dimensional De Bruijn graph such that the set of vertices equals $\L_{n} = \L \cap \A^n$ and the set of edges equals $\L_{n+1} = \L \cap \A^{n+1}$.
In other words, there is an oriented edge $e \in \L_{n+1}$ starting in $p \in \L_n$ and ending in $s \in \L_n$ if $p$ is a prefix of $e$ and $s$ is a suffix of $e$.
For languages closed under reversal, the notion of Rauzy graph has been generalized in \cite{BuLuGlZa} to \emph{super reduced Rauzy graph}.
A super reduced Rauzy graph is undirected and has multiple edges and loops allowed.
It can be produced from a Rauzy graph by a certain ``compression'' which replaces some vertices and takes advantage of the symmetry given by the reversal mapping, see \cite{BuLuGlZa} for more details.
This process is lossless and one can reconstruct the Rauzy graph back.
The following definition (introduced in \cite{PeSta1}) of undirected graph of symmetries generalizes the notion of super reduced Rauzy graph; the two definitions coincide for $G = \{\id, \Tr\}$.

\begin{definition} Let
 $\uu$ be an infinite word with language closed under
$ G$ and $n\in \mathbb{N}$.

\begin{description}
\item[1)] The \emph{directed graph  of symmetries of the word $\uu$ of order $n$} is
$\overrightarrow{\Gamma}_n(\uu) = (V,\overrightarrow{E})$ with the
set of vertices $$V=\{ [w]\mid w\in \L_n(\uu), w \ \hbox{is left or
right special} \}$$ and  an edge $e\in \overrightarrow{E} \subset \L(\uu)$
starts in the vertex $[w]$ and ends in the vertex $[v]$ if and only if
\begin{itemize}
\item the prefix of $e$ of length $n$ belongs to $[w]$,
 \item the suffix of $e$ of length $n$ belongs to $[v]$,
\item $e$ has exactly two occurrences of special factors of length $n$, i.e., $e$ is at least of length $n+1$ and all its factors of length $n$  except for its prefix and suffix are not special.
\end{itemize}

\item[2)] The \emph{undirected graph of symmetries of the word $\uu$ of order $n$} is
$\Gamma_n(\uu)=(V,E)$ with the same set of vertices as
$\overrightarrow{\Gamma}_n(\uu)$ and two vertices $[w]$ and $[v]$ are connected by an undirected edge $[e] \in E$ if and only if
$$
e \in \overrightarrow{E} \text{ starts in } [w] \text{ and ends in } [v] \text{ or vice versa.}
$$
\end{description}
Multiple edges and loops are allowed in both defined graphs.
\end{definition}

Any vertex $[w]$ of the graph $\overrightarrow{\Gamma}_n(\uu)$
represents a class of equivalent factors of $\L_n(\uu)$. It has at
most  $\#G$ elements; the actual cardinality  of $[w]$ may depend
on $n$ as well.

Since $G$ contains at least one antimorphism, the word $\uu$ is recurrent which implies that both graphs  $\overrightarrow{\Gamma}_n(\uu)$ and  $\Gamma_n(\uu)$ are connected.

We give two famous examples to demonstrate the last definition.

\begin{example}[The Fibonacci word]
The Fibonacci word $\uu_F$ is the fixed point of the morphism determined by
$$
0 \mapsto 01, 1 \mapsto 0.
$$
We have
$$
\uu_F = 0100101001001010010100100101001001010010 \ldots \ .
$$
The language of the Fibonacci word is closed under reversal, i.e., if we set $G_F = \{ \id, \Tr \}$, then the language $\L_{\uu_F}$ is closed under $G_F$.
We have
$$
\L_3(\uu_F) = \{ 101, 010, 100, 001 \} \quad \text{ and } \quad \L_4(\uu_F) = \{ 1001, 1010, 0100, 0010, 0101 \}.
$$
The only special factor of length $3$ is $010$ and it is in fact bispecial.

Figure \ref{fig:fibo_example_directed} depicts the graph $\overrightarrow{\Gamma}_3(\uu_F)$,
while Figure \ref{fig:fibo_example_undirected} shows the graph ${\Gamma}_3(\uu_F)$.

\newsavebox{\helperfibo}
\savebox{\helperfibo}{$\overrightarrow{\Gamma}_3(\uu_F)$}

\begin{figure}[h]
\centering
\parbox{0.45\textwidth}{
\centering
\begin{tikzpicture}[scale=1]

\tikzstyle{every node} = [draw, circle, line width=2pt, outer sep=2pt]

\node (010) at (1,3) {$[010]$};

\path[->,line width=2pt,every node/.style={}]
 (010) edge[out=60, in=120, loop, looseness=8.5] node[pos=0.5,above] {$010010$} (010)
 (010) edge[out=-60, in=-120, loop, looseness=8.5] node[pos=0.5,below] {$01010$} (010)
;

\end{tikzpicture}
\caption{The graph \usebox{\helperfibo} for the group $G_F$.}
\label{fig:fibo_example_directed}
}%
\qquad
\begin{minipage}{0.45\textwidth}
\centering
\begin{tikzpicture}[scale=1]

\tikzstyle{every node} = [draw, circle, line width=2pt, outer sep=2pt]

\node (010) at (1,3) {$[010]$};

\path[-,line width=2pt,every node/.style={}]
 (010) edge[out=60, in=120, loop, looseness=8.5] node[pos=0.5,above] {$[010010]$} (010)
(010) edge[out=-60, in=-120, loop, looseness=8.5] node[pos=0.5,below] {$[01010]$} (010)
 ;

\end{tikzpicture}
\caption{The graph ${\Gamma}_3(\uu_F)$ for the group $G_F$.}
\label{fig:fibo_example_undirected}
\end{minipage}
\end{figure}

\end{example}

\begin{example}[Generalized Thue-Morse words] \label{ex:genTM}

The generalized Thue-Morse word $\tt_{b,m}$ is defined on the alphabet $\{0, \ldots, m-1\}$ for all $b \geq 2$ and $m \geq 1$ as
$$
\tt_{b,m} = \left ( s_b(n) \mod m \right )_{n=0}^{+\infty},
$$
where $s_b(n)$ denotes the sum of digits in the base-$b$ representation of the integer $n$.
See for instance \cite{AlSh} where this class of words is studied.
In \cite{Sta2011}, it is show that the language of $\tt_{b,m}$ is closed under a group isomorphic to the dihedral group of order $2m$, denoted $I_2(m)$.
We exhibit here the graphs of symmetries for two pairs of parameters $b$ and $m$.

Take $b = m = 2$, the word $\tt_{2,2}$ is then the famous Thue-Morse word.
It starts with $0$ and it is a fixed point of the morphism determined by
$$
0 \mapsto 01 \text{ and } 1 \mapsto 10.
$$
We have
$$
\tt_{2,2} = 0110100110010110100101100110100110010110 \ldots
$$

Figure \ref{fig:example-TM-Rauzy} shows the Rauzy graph of order $3$ of the Thue-Morse word.

The language of the Thue-Morse word is closed under the reversal mapping and the antimorphism exchanging letters, denoted again $E$.
Thus, it is closed under the group
$$
I_2(2) = \{\id, \Tr, E, E \Tr \}.
$$
Figure \ref{fig:tm_example_directed} depicts the graph $\overrightarrow{\Gamma}_3(\tt_{2,2})$,
while Figure \ref{fig:tm_example_undirected} shows the graph ${\Gamma}_3(\tt_{2,2})$.

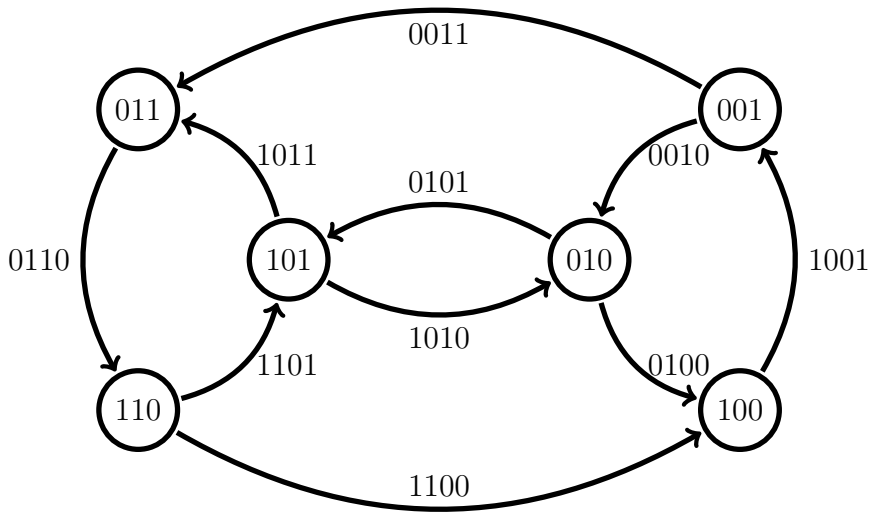
\begin{figure}[h]
\centering
\begin{tikzpicture}[scale=2]

\tikzstyle{every node} = [draw, circle, line width=2pt, outer sep=2pt]

\node (101) at (1,1) {$101$};
\node (010) at (3,1) {$010$};
\node (100) at (4,0) {$100$};
\node (001) at (4,2) {$001$};
\node (011) at (0,2) {$011$};
\node (110) at (0,0) {$110$};

\path[->,line width=2pt,every node/.style={}]
    (101) edge[bend right] node[pos=0.5,below] {$1010$} (010)
   (010) edge[bend right] node[pos=0.5,above] {$0101$} (101)
(010) edge[bend right] node[pos=0.5,right] {$0100$} (100)
(001) edge[bend right] node[pos=0.5,right] {$0010$} (010)
(101) edge[bend right] node[pos=0.5,right] {$1011$} (011)
(110) edge[bend right] node[pos=0.5,right] {$1101$} (101)

(100) edge[bend right] node[pos=0.5,right] {$1001$} (001)
(001) edge[bend right] node[pos=0.5,below] {$0011$} (011)
(011) edge[bend right] node[pos=0.5,left] {$0110$} (110)
(110) edge[bend right] node[pos=0.5,above] {$1100$} (100)
    ;

\end{tikzpicture}
\caption{The Rauzy graph of order $3$ of the Thue-Morse word $\tt_{2,2}$.}
\label{fig:example-TM-Rauzy}
\end{figure}

\newsavebox{\helpertm}
\savebox{\helpertm}{$\overrightarrow{\Gamma}_3(\tt_{2,2})$}

\begin{figure}[h]
\centering
\parbox{0.45\textwidth}{
\centering
\begin{tikzpicture}[scale=1]

\tikzstyle{every node} = [draw, circle, line width=2pt, outer sep=2pt]

\node (011) at (1,-0.5) {$[011]$};
\node (101) at (1,3.5) {$[101]$};

\path[->,line width=2pt,every node/.style={}]
 (101) edge[bend right=80] node[pos=0.5,left] {$0100$} (011)
 (101) edge[bend right=10] node[pos=0.5,left] {$1011$} (011)
 (011) edge[bend right=10] node[pos=0.5,right] {$0010$} (101)
 (011) edge[bend right=80] node[pos=0.5,right] {$1101$} (101)
 (011) edge[out=-20, in=-40, loop, looseness=12] node[pos=0.5,below] {$0011$} (011)
 (011) edge[out=-60, in=-80, loop, looseness=12] node[pos=0.5,below] {$1100$} (011)
 (011) edge[out=-120, in=-100, loop, looseness=12] node[pos=0.5,below] {$0110$} (011)
 (011) edge[out=-160, in=-140, loop, looseness=12] node[pos=0.5,below] {$1001$} (011)
 (101) edge[out=30, in=60, loop, looseness=14] node[pos=0.5,above] {$0101$} (101)
 (101) edge[out=150, in=120, loop, looseness=14] node[pos=0.5,above] {$1010$} (101)
;

\end{tikzpicture}
\caption{The graph \usebox{\helpertm} for the group $I_2(2)$.}
\label{fig:tm_example_directed}
}%
\qquad
\begin{minipage}{0.45\textwidth}
\centering
\begin{tikzpicture}[scale=1]

\tikzstyle{every node} = [draw, circle, line width=2pt, outer sep=2pt]

\node (011) at (1,0) {$[011]$};
\node (101) at (1,3) {$[101]$};

\path[-,line width=2pt,every node/.style={}]
 (011) edge node[pos=0.5,right] {$[0100]$} (101)
 (101) edge[out=60, in=120, loop, looseness=8.5] node[pos=0.5,above] {$[1010]$} (101)
 (011) edge[out=-30, in=-80, loop, looseness=9.5] node[pos=0.5,below] {$[1100]$} (011)
 (011) edge[out=-100, in=-150, loop, looseness=9.5] node[pos=0.5,below] {$[1001]$} (011)
 ;

\end{tikzpicture}
\caption{The graph ${\Gamma}_3(\tt_{2,2})$ for the group $I_2(2)$.}
\label{fig:tm_example_undirected}
\end{minipage}
\end{figure}

Take $b = m = 3$.
As shown in \cite{AlSh}, the word $\tt_{3,3}$ is a fixed point of the morphism given by
$$
0 \mapsto 012, \ 1 \mapsto 120 \text{ and } 2 \mapsto {201}
$$
and starting with $0$.
Its language is closed under the group $I_2(3)$ which coincides with the group already introduced in \Cref{ex:I2_3}.

We have
$$
\L_3(\tt_{3,3}) = \{001, 202, 220, 200, 201, 011, 010, 012, 020, 122, 112, 101, 120, 212, 121 \},
$$
where the only special factors are $012$, $120$, and $201$.
Figure \ref{fig:tmm_example_undirected} shows the undirected graph of symmetries $\Gamma_3(\tt_{3,3})$.

\begin{figure}[h]
\centering
\begin{tikzpicture}[scale=1]

\tikzstyle{every node} = [draw, circle, line width=2pt, outer sep=2pt]

\node (012) at (1,0) {$[012]$};

\path[-,line width=2pt,every node/.style={}]
 (012) edge[out=60, in=120, loop, looseness=10] node[pos=0.5,above] {$[012120]$} (012)
 (012) edge[out=-60, in=-120, loop, looseness=10] node[pos=0.5,below] {$[0120]$} (012)
;

\end{tikzpicture}
\caption{The graph ${\Gamma}_3(\tt_{3,3})$ for the group $I_2(3)$.}
\label{fig:tmm_example_undirected}
\end{figure}
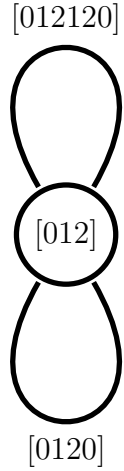

\end{example}

\begin{definition} \label{def:Gdistinguishing}
Let $\uu$ be an infinite word with language
closed under $G$. We say that a number $n \in \mathbb{N}$ is
\emph{$G$-distinguishing} on $\uu$ if  for any $w \in \L_n(\uu)$ we have:
 \begin{equation}\label{anti}
\hbox{$\Theta_1\neq \Theta_2 \quad  \Rightarrow   \quad
\Theta_1(w) \neq \Theta_2 (w)$ \quad  for  any two antimorphisms
$\Theta_1, \Theta_2 \in G$.}
\end{equation}
\end{definition}
If  $n$ is $G$-distinguishing on $\uu$, then the knowledge of a pair
$w$ and $\Theta(w)$   for a single word $w \in \L_n(\uu)$ enables
us to unambigously determine $\Theta \in G$. Let us stress that the requirement
\eqref{anti} gives also for  any two distinct morphisms $\phi_1,
\phi_2 \in G$ that  $\phi_1(w) \neq \phi_2 (w)$. 
(It suffices to consider antimorphisms $\phi_1\Theta$ and $\phi_2\Theta$, where $\Theta \in G$ is an antimorphism,
and use the fact that for all $w \in \Lu$ there is a factor $w' \in \Lu$ such that $\Theta(w') = w$.)

One can readily see that if $n$ is $G$-distinguishing on $\uu$, then any $m$ greater than $n$ is also $G$-distinguishing on $\uu$.

As proved in \cite{PeSta1}, the connectivity of graphs of symmetries, which follows from the closedness of the language under $G$ containing an antimorphism,
enables to bind the factor and palindromic complexities.

\begin{thm}\label{nerovnostProVice} If
$\uu$ is an infinite  word with language closed  under a group $G$ and
 $N\in \mathbb{N}$ is $G$-distinguishing on $\uu$,
then
\begin{equation}\label{mistoStromovi} \Delta \C (n) + \# G \ \ \geq  \sum_{\Theta \in G^{(2)}}\Bigl(\P_{\Theta}(n) + \P_{\Theta}(n+1)\Bigr) \qquad
\hbox{for any} \ n \geq N.
\end{equation}
\end{thm}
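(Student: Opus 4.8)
The plan is to realize both sides of \eqref{mistoStromovi} as Euler characteristics and then to play the connectivity of a quotient graph against a fixed‑point count. I write $\cR_n$ for the (undirected) Rauzy graph of order $n$ of $\uu$: its vertex set is $\L_n(\uu)$ and its edge set is $\L_{n+1}(\uu)$, each edge $f$ joining its length-$n$ prefix to its length-$n$ suffix. Then $\chi(\cR_n)=\#\L_n(\uu)-\#\L_{n+1}(\uu)=\C(n)-\C(n+1)=-\Delta\C(n)$, and since $\uu$ is recurrent $\cR_n$ is connected. The group $G$ acts on $\cR_n$ by graph automorphisms: a morphism preserves edge orientation while an antimorphism reverses it, because for an antimorphism $\Theta$ and $|f|=n+1$ the prefix of $\Theta(f)$ is $\Theta(\text{suffix of }f)$, so $\Theta$ interchanges the two endpoints of $f$. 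To get an action free of edge inversions I would pass to the barycentric subdivision $\widetilde{\cR}_n$, inserting a midpoint $m_f$ on each edge $f$; this preserves the Euler characteristic and connectivity, and now $G$ acts simplicially.

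The engine is the elementary identity obtained by applying Burnside's lemma separately to the vertices and to the edges of $\widetilde{\cR}_n$,
\begin{equation*}
\#G\cdot \chi\bigl(\widetilde{\cR}_n/G\bigr)=\sum_{g\in G}\chi\bigl(\mathrm{Fix}(g)\bigr),
\end{equation*}
where $\mathrm{Fix}(g)$ is the fixed subgraph of $g$ and $\chi(\mathrm{Fix}(g))=\#\{\text{fixed vertices}\}-\#\{\text{fixed edges}\}$. For $g=\id$ this contributes $\chi(\widetilde{\cR}_n)=-\Delta\C(n)$, so it remains to evaluate the nontrivial terms.

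Here I would use that $N$ is $G$-distinguishing, together with the remark following \Cref{def:Gdistinguishing}. For $|w|\ge N$ and $g\in G$ with $g(w)=w$: if $g$ is a morphism it must be $\id$, and if $g$ is an antimorphism then applying the morphism part of the distinguishing property to $g^2$ and $\id$ forces $g^2=\id$, so $g$ is involutive. Every vertex of $\widetilde{\cR}_n$ is either a length-$n$ factor ($n\ge N$) or a midpoint $m_f$ with $|f|=n+1>N$, hence a morphism $g\neq\id$ fixes no vertex and $\chi(\mathrm{Fix}(g))=0$. For an antimorphism $\Theta$ the fixed vertices are exactly the $\Theta$-palindromes among length-$n$ factors ($\P_\Theta(n)$ of them) and the midpoints of the $\Theta$-palindromic length-$(n+1)$ factors ($\P_\Theta(n+1)$ of them); crucially $\Theta$ fixes \emph{no} edge of $\widetilde{\cR}_n$, since an antimorphism reverses orientation and hence inverts every edge $f$ it preserves, interchanging the two half-edges at $m_f$. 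Therefore $\chi(\mathrm{Fix}(\Theta))=\P_\Theta(n)+\P_\Theta(n+1)$, which vanishes unless $\Theta\in G^{(2)}$.

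Assembling the three cases gives
\begin{equation*}
\#G\cdot\chi\bigl(\widetilde{\cR}_n/G\bigr)=-\Delta\C(n)+\sum_{\Theta\in G^{(2)}}\bigl(\P_\Theta(n)+\P_\Theta(n+1)\bigr),
\end{equation*}
and since $\widetilde{\cR}_n/G$ is a connected graph we have $\chi(\widetilde{\cR}_n/G)\le 1$; substituting and rearranging yields exactly \eqref{mistoStromovi}, the constant $\#G$ arising as $\#G\cdot 1$. The main obstacle, and the step that produces the \emph{sum} $\P_\Theta(n)+\P_\Theta(n+1)$ with the correct positive sign instead of a difference, is the fixed-point computation for antimorphisms: one must argue carefully (the subdivision is precisely what makes this clean) that a palindromic edge contributes a fixed \emph{midpoint} but no fixed edge, so that palindromes of length $n$ and of length $n+1$ both count positively toward $\chi(\mathrm{Fix}(\Theta))$.
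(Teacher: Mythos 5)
Your proof is correct, and it gets to \eqref{mistoStromovi} by a genuinely different mechanism than the one the paper relies on. The paper does not reprove \Cref{nerovnostProVice} at all: it imports it from \cite{PeSta1}, where the argument runs through the graph of symmetries $\Gamma_n(\uu)$ --- the Rauzy graph is first compressed so that only the $G$-classes of special factors survive as vertices, the $\Theta$-palindromes of lengths $n$ and $n+1$ are then matched with the edges that are fixed (as classes) by an antimorphism, and the bound comes from connectivity of this compressed quotient, with equality exactly when $\Gamma_n(\uu)$ has the tree-like structure that the paper then uses to \emph{define} $G$-richness. You instead keep the full Rauzy graph $\cR_n$, pass to its barycentric subdivision to eliminate edge inversions, and apply Burnside's lemma cell-by-cell to obtain the equivariant Euler characteristic identity $\#G\cdot\chi\bigl(\widetilde{\cR}_n/G\bigr)=\sum_{g\in G}\chi\bigl(\mathrm{Fix}(g)\bigr)$; the $G$-distinguishing hypothesis then does exactly one job, namely killing the fixed-point sets of every morphism $g\neq\id$ and of every non-involutive antimorphism, while each involutive antimorphism contributes $\P_{\Theta}(n)+\P_{\Theta}(n+1)$ because palindromes of length $n$ are fixed vertices, palindromes of length $n+1$ are fixed midpoints, and (thanks to the subdivision) there are never fixed edges. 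Connectivity enters for you only as $\chi\bigl(\widetilde{\cR}_n/G\bigr)\leq 1$. Your route buys cleaner bookkeeping: the step that in \cite{PeSta1} requires locating a unique ``central'' palindrome inside each symmetric edge of $\Gamma_n(\uu)$ becomes an automatic fixed-point count, and the equality case is transparent ($\chi=1$ if and only if the quotient is a tree, matching the paper's tree-like structure). The paper's route buys something else: it works directly with the object $\Gamma_n(\uu)$ on which Property $\Gg{G}{N}$, and hence \Cref{def:G_and_almost_G_rich}, is built, so the equality characterization comes out in exactly the form the rest of the paper needs. Both arguments are sound; yours is the more conceptual packaging of the same underlying quotient-plus-connectivity idea, with the combinatorial case analysis delegated to standard equivariant machinery.
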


The term  $\Delta \C (n) + \# G$ represents an upper bound on the number of palindromes occurring in $\uu$.
It follows from the proof of the last theorem in \cite{PeSta1} that, for a given integer $n$, the equality in \eqref{mistoStromovi} is reached if and only if the undirected graph of symmetries $\Gamma_n(\uu)$ has a specific tree-like structure.
 Words for which this upper bound is reached are in some sense opulent in palindromes.

Therefore, we adopt this specific structure in the following definition of Property $G$-tls$(N)$, where ``tls'' is an abbreviation of tree-like structure.
The name of the property also keeps track of the group $G$ since a word may satisfy it or not according to the choice of $G$ as we illustrate on the Thue-Morse word just after the definition.
The parameter $N$  plays a similar role as in \Cref{nerovnostProVice} -- it enables us to relate the property of having a tree-like structure to a value measuring the deficit of generalized palindromes.

\begin{definition}
 We say
that  an infinite  word $\uu$ has \emph{Property $G$-tls$(N)$}  if for
each $n\in \mathbb{N}$, $n\geq N$  we have that
\begin{itemize}
\item $\L(\uu)$ is closed  under  $ G$;
  \item if $[e]$ is a loop in $\Gamma_n(\uu)$, then $e$ is a
  $\Theta$-palindrome for some $\Theta\in G$;
  \item the graph obtained from $\Gamma_n(\uu)$ by removing loops is a tree.
  \end{itemize}

\end{definition}

The graph ${\Gamma}_3(\tt_{2,2})$ for the group $G = I_2(2)$ in Figure \ref{fig:tm_example_undirected} has tree-like structure.
As shown in \cite{PeSta1}, all the undirected graphs of symmetries of the Thue-Morse word $\tt_{2,2}$ of order greater than $0$ have the structure and thus the word $\tt_{2,2}$ satisfies Property $\Gg{G}{1}$.
However, for $G = \{\id, \Tr\}$ the undirected graphs of symmetries of the Thue-Morse word does not have tree-like structure. \Cref{fig:TM_no_tls} shows the undirected graph $\Gamma_3(\tt_{2,2})$ for $G = \{\id, \Tr\}$.

\begin{figure}[h]
\centering
\begin{tikzpicture}[scale=1]

\tikzstyle{every node} = [draw, circle, line width=2pt, outer sep=2pt]

\node (011) at (-5,0) {$[011]$};
\node (101) at (-1.5,0) {$[101]$};
\node (010) at (1.5,0) {$[010]$};
\node (001) at (5,0) {$[001]$};

\path[-,line width=2pt,every node/.style={}]
 (011) edge node[pos=0.5,above] {$[1011]$} (101)
 (101) edge node[pos=0.5,above] {$[0101]$} (010)
 (010) edge node[pos=0.5,above] {$[0010]$} (001)
(001) edge[bend left] node[pos=0.5,below] {$[0011]$} (011)
 (011) edge[out=-135, in=135, loop, looseness=5] node[pos=0.5,left] {$[0110]$} (011)
 (001) edge[out=-45, in=45, loop, looseness=5] node[pos=0.5,right] {$[1001]$} (001)
;

\end{tikzpicture}
\caption{The graph $\Gamma_3(\tt_{2,2})$ for the group $\{\id, \Tr\}$.}
\label{fig:TM_no_tls}
\end{figure}

Now, we define the most important notion of the article, namely the $G$-richness. 
As  mentioned in Introduction, the classical richness has several equivalent characterizations, each of them is a candidate for a definition of the new notion.
Nevertheless, some of these characterizations are accompanied by technical complications when the reversal mapping $\Tr$ is replaced by a larger group $G$.
For example, the inequality in \Cref{nerovnostProVice} is valid only for $n$ which is $G$-distinguishing and such $n$ could be quite large. 
But in the case $G=\{ \id, \Tr \}$ the inequality is valid  for all nonnegative integers $n$ and therefore it can be used for equivalent definition of classical richness.

We have decided in \cite{PeSta1} to adopt definition of $G$-richness which is based on the notion of graphs of symmetries.
This specific tree structure of these graphs seems to be essential when considering palindromic richness.

\begin{definition}[\cite{PeSta1}] \label{def:G_and_almost_G_rich}
 We say
that $\uu$ is \emph{$G$-rich} if $\uu$ has Property $G$-tls$(1)$  and
$\uu$    is
 \emph{almost $G$-rich} if there exists $N \in \mathbb{N}$ such that $\uu$ has Property
 $G$-tls$(N)$.
\end{definition}

In the next section, we explain legitimacy  of  the name
$G$-richness, i.e., we show that the classical richness is
contained in our new definition of richness as well.


\subsection{Palindromic richness in the classical sense}\label{sec:clasika}

Let us recall the origin of palindromic richness.  In this section,
we use the word palindrome for $R$-palindrome and we denote by $\Pal(w)$  the set  of all
palindromic factors of a finite word $w$ including the empty word
$\varepsilon$. In \cite{DrJuPi}, Droubay, Justin and Pirillo
provided the following simple upper bound
\begin{equation}\label{eq:horni_mez_poctu_pali}
\# \Pal(w) \leq |w| + 1.
\end{equation}
 This bound serves for the definition of palindromic richness in the classical sense (see \cite{DrJuPi,GlJuWiZa}).
A finite word $w$ is \emph{rich} if $\# \Pal(w) = |w| + 1$.
An infinite word is \emph{rich} if all its factors are rich.
Another type of bound on the number of palindromes contained in an infinite word
was proved in \cite{BaMaPe}: if an infinite word has its language
closed under the reversal mapping, then the following inequality
holds
\begin{equation} \label{eq:nerovnost}
 \Delta  \C (n) + 2 \geq \P(n) + \P(n+1) \quad \text{   for all }
 n\in \mathbb{N}.
\end{equation}
The authors of \cite{GlJuWiZa} showed that an infinite word
$\uu$ with language closed under reversal is rich if and only if
the equality in \eqref{eq:nerovnost} is attained for all $n \in
\mathbb{N}$. Their proof uses the already mentioned notion of super reduced Rauzy
graph, which, in our terminology, is  the graph of symmetries
$\Gamma_n(\uu)$ for the group $G = \{ \id, \Tr \}$; a recurrent word $\uu$
is rich if and only if $\uu$ satisfies  -  again in our
terminology - Property $G$-tls$(1)$.

\begin{example}
All episturmian words (see \cite{DrJuPi}), which include Sturmian and Arnoux-Rauzy words, are rich.
Since the Fibonacci word $\uu_F$ is Sturmian, it is rich.
Other class of rich words are the words coding interval exchange transformations determined by a symmetric permutation, see \cite{BaMaPe}.
\end{example}

The following theorem summarizes properties characterizing rich recurrent words, their proofs can be found in
\cite{GlJuWiZa,DrJuPi, BuLuGlZa,BaPeSta2}.

\begin{thm}\label{equiv_rich}
For an infinite word $\uu$ with language closed under reversal
the following statements are equivalent:
\begin{enumerate}
\item  \label{equiv_rich_1} $\uu$ is rich,
 \item  \label{equiv_rich_returnpalindromicky} \emph{\cite{GlJuWiZa}} any complete return word of
any palindromic factor of $\uu$ is a~palindrome,
 \item \label{equiv_rich_returnobecny} \emph{\cite{GlJuWiZa}} for any
factor $w$ of $\uu$, every factor of $\uu$ that contains $w$ only
as its prefix and $R(w)$ only as its suffix is a~palindrome,
 \item \label{equiv_rich_suffix} \emph{\cite{DrJuPi, GlJuWiZa}} the  longest palindromic suffix of any factor $w \in \L(\uu)$ is unioccurrent in $w$,
 \item \label{equiv_rich_nasa} \emph{\cite{BuLuGlZa}} for
each $n \in \N$ the following equality holds $$\C(n+1) - \C(n) + 2 =
\P(n) + \P (n+1),$$
 \item \label{equiv_rich_7} \emph{\cite{BuLuGlZa}}  each  graph of symmetries
$\Gamma_n(\uu)$ satisfies: all its loops are palindromes and
the graph obtained from $\Gamma_n(\uu)$ by removing loops is a tree,
 \item  \label{equiv_rich_bilateral} \emph{\cite{BaPeSta2}} any bispecial factor $w$ of $\uu$
satisfies:
\begin{itemize}
\item if $w$ is non-palindromic, then
$$\b(w)= 0;$$
\item if $w$ is a~palindrome, then
$$\b(w)= \# \Pext(w) - 1.$$
\end{itemize}

\end{enumerate}
\end{thm}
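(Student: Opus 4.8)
The plan is to treat richness, statement~\ref{equiv_rich_1}, as the hub and to split the seven conditions into two families: the ``per-factor'' conditions \ref{equiv_rich_1}--\ref{equiv_rich_suffix}, which speak about individual factors and their palindromic suffixes, and the ``complexity/graph'' conditions \ref{equiv_rich_nasa}--\ref{equiv_rich_bilateral}, which aggregate information over all factors of a fixed length. Rather than forcing a single cycle, I would prove \ref{equiv_rich_1} equivalent to a representative of each family and then close the internal equivalences inside each family; this matches how the ingredients are distributed across \cite{DrJuPi,GlJuWiZa,BuLuGlZa,BaPeSta2}, all of which I may invoke.

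The foundation is \ref{equiv_rich_1} $\iff$ \ref{equiv_rich_suffix}, resting on the Droubay--Justin--Pirillo increment lemma: appending one letter to a word $v$ gives $\#\Pal(va)\le \#\Pal(v)+1$, with equality exactly when the longest palindromic suffix of $va$ is unioccurrent in $va$. Reading \eqref{eq:horni_mez_poctu_pali} as an accumulation of these increments along the prefixes of a factor $w$, the bound $\#\Pal(w)=|w|+1$ holds for every factor iff each one-letter extension contributes a new palindrome, that is, iff every longest palindromic suffix is unioccurrent, which is \ref{equiv_rich_suffix}. To reach the return-word conditions I would use \ref{equiv_rich_suffix} directly: for a palindromic factor $p$ and a complete return word $v$ of $p$, the longest palindromic suffix $s$ of $v$ has $|s|\ge|p|$, and since $s$ is itself a palindrome, an occurrence of $p$ as a prefix of $s$ inside $v$ would create an occurrence of $p$ other than the prefix and suffix ones unless $s=v$; unioccurrence of $s$ then forces $v=\Tr(v)$, giving \ref{equiv_rich_suffix}$\Rightarrow$\ref{equiv_rich_returnpalindromicky}. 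The implication \ref{equiv_rich_returnobecny}$\Rightarrow$\ref{equiv_rich_returnpalindromicky} is immediate on taking $w=p$ palindromic, so that $\Tr(w)=w$, while \ref{equiv_rich_returnpalindromicky}$\Rightarrow$\ref{equiv_rich_returnobecny} and the return to \ref{equiv_rich_1} pass through the shortest palindrome having $w$ as a prefix and $\Tr(w)$ as a suffix, reducing the general prefix/suffix factor to a complete return word of that palindrome.

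For the complexity side I would start from \eqref{eq:nerovnost}, valid for all $n$ under closure under reversal, and form the nonnegative slack $\Delta\C(n)+2-\P(n)-\P(n+1)$; its aggregate equals the palindromic defect, so richness is equivalent to every slack vanishing, giving \ref{equiv_rich_1}$\iff$\ref{equiv_rich_nasa}. To obtain \ref{equiv_rich_nasa}$\iff$\ref{equiv_rich_7} I would revisit the derivation of \eqref{eq:nerovnost} as a counting argument on the graph of symmetries $\Gamma_n(\uu)$ for $G=\{\id,\Tr\}$: the slack equals the number of independent cycles remaining after deleting loops plus the number of non-palindromic loops, so equality for all $n$ is precisely the tree-with-palindromic-loops structure of \ref{equiv_rich_7}. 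Finally, when the slack vanishes, differencing \ref{equiv_rich_nasa} yields $\Delta^2\C(n)=\P(n+2)-\P(n)$; comparing Cassaigne's formula $\Delta^2\C(n)=\sum_{w\in\L_n(\uu)}\b(w)$ with the $\Theta=\Tr$ case of \eqref{DeltaP}, which rewrites $\P(n+2)-\P(n)=\sum_{w\text{ pal}}(\#\Pext(w)-1)$, turns the global equality into $\sum_{w}\b(w)=\sum_{w\text{ pal}}(\#\Pext(w)-1)$, and per-factor bounds on $\b(w)$ force the termwise dichotomy of \ref{equiv_rich_bilateral}.

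I expect the main obstacle to be the bridge between the two families, namely that the pointwise richness condition aggregates \emph{exactly} into equality in \eqref{eq:nerovnost}. The delicate bookkeeping is the identification of each unit of slack with a concrete feature of $\Gamma_n(\uu)$ -- an independent cycle or a non-palindromic loop -- since the same correspondence must simultaneously power \ref{equiv_rich_nasa}, \ref{equiv_rich_7}, and, through the bilateral order, \ref{equiv_rich_bilateral}. Establishing this correspondence carefully, and checking that it is stable under the reversal symmetry exploited throughout, is where the real work lies.
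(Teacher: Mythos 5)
First, a point of comparison: the paper never proves \Cref{equiv_rich}. It is stated as a summary of known results, with the proofs explicitly delegated to \cite{GlJuWiZa,DrJuPi,BuLuGlZa,BaPeSta2}, so there is no internal proof to measure you against. The closest internal material is the $G$-machinery of the later sections, whose specialization to $G=\{\id,\Tr\}$ recovers items \ref{equiv_rich_returnpalindromicky}, \ref{equiv_rich_suffix}, \ref{equiv_rich_7} and \ref{equiv_rich_bilateral}; your outline runs parallel to it (your complete-return-word-of-the-lps trick is \Cref{lem:crwAlps,lem:lpsAcrw}, your per-factor bounds on $\b(w)$ are \Cref{G_rich_Pext_nerovnosti}, and your termwise-forcing step is exactly the proof of \Cref{G_rich_Pext_rovnosti}). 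Judged on its own terms, much of your plan is a faithful reconstruction of the cited arguments: the increment lemma for \ref{equiv_rich_1}$\Leftrightarrow$\ref{equiv_rich_suffix}, the return-word argument for \ref{equiv_rich_suffix}$\Rightarrow$\ref{equiv_rich_returnpalindromicky}, and the graph counting behind \ref{equiv_rich_nasa}$\Leftrightarrow$\ref{equiv_rich_7} are all sound in substance.

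However, as written the outline does not establish the equivalence. The clearest gap is that item \ref{equiv_rich_bilateral} is a dead end in your scheme: you derive it from \ref{equiv_rich_nasa} by termwise forcing, but you sketch no implication \emph{out} of it, so the seven conditions are not shown equivalent. The missing converse is not a formality. It needs (i) the observation that non-bispecial factors contribute zero to both sides --- in particular that a non-bispecial palindrome has exactly one palindromic extension, which uses closure under reversal --- so that \ref{equiv_rich_bilateral} combined with Cassaigne's formula and \eqref{DeltaP} gives $\Delta^2\C(n)=\P(n+2)-\P(n)$ for all $n$; and (ii) the base case $\Delta\C(0)+2=\P(0)+\P(1)$ (both sides equal $\#\A+1$), so that telescoping recovers \ref{equiv_rich_nasa}. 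That this base equality matters is visible in the paper's own generalization: in \Cref{G_rich_Pext_rovnosti} (via \Cref{prevod}) it must be added as an explicit hypothesis.

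A second problem is your proof of \ref{equiv_rich_1}$\Leftrightarrow$\ref{equiv_rich_nasa}, which rests on ``the aggregate of the slacks equals the palindromic defect.'' That statement is the Brlek--Reutenauer conjecture --- correctly $2D(\uu)=\sum_{n}\bigl(\Delta\C(n)+2-\P(n)-\P(n+1)\bigr)$, note the factor $2$ --- a later and substantially harder theorem \cite{BrRe-conjecture,BaPeSta5}, not among the four sources you permit yourself, and whose proof builds on characterizations of the very kind being proved here, so invoking it risks circularity. The equivalence should instead be routed through the graph argument, i.e., through your own step \ref{equiv_rich_nasa}$\Leftrightarrow$\ref{equiv_rich_7} together with \ref{equiv_rich_7}$\Leftrightarrow$\ref{equiv_rich_1}, which is precisely how \cite{GlJuWiZa,BuLuGlZa} close the loop. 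Finally, a smaller soft spot: your bridge \ref{equiv_rich_returnpalindromicky}$\Rightarrow$\ref{equiv_rich_returnobecny} ``through the shortest palindrome having $w$ as a prefix and $\Tr(w)$ as a suffix'' presupposes that such a palindrome exists, which is not available at that point; the clean way back from \ref{equiv_rich_returnpalindromicky} is to \ref{equiv_rich_suffix}: a non-unioccurrent longest palindromic suffix $s$ of a factor yields a suffix of that factor which is a complete return word of $s$, hence by \ref{equiv_rich_returnpalindromicky} a palindromic suffix longer than $s$, a contradiction.
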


Another characterization of rich words, which is not treated in this article, can be found in \cite{BuLuGlZa2}.

Richness in the classical sense is closely related to the notion of defect of a word.
As introduced in \cite{BrHaNiRe}, the \textit{defect of a finite word} $w$ is defined as follows
$$D(w) = |w| +1 - \# \Pal(w).$$
The \textit{defect of an infinite word} $\uu$ is defined as
$$D(\uu) = \sup_{w \in \Lu} \{ D(w) \}.$$
Recall that a word is rich if its defect is zero.

In \cite{BrHaNiRe} and \cite{BlBrGaLa}, the authors study also words with finite defect;
in \cite{GlJuWiZa},  such words are called \emph{almost rich}.
Words almost rich in the classical sense  can be characterized by properties analogous to those listed in \Cref{equiv_rich}.
For more details see \cite{BaPeSta3}.

The new definition is in fact based on a generalization of characterization \ref{equiv_rich_7} in \Cref{equiv_rich}.
The goal of this article is to generalize some of those characterizations:
\Cref{CharakteristikaReturn} generalizes characterization \ref{equiv_rich_returnpalindromicky}
and \Cref{CharakteristikaLongestSuffix} generalizes characterization \ref{equiv_rich_suffix}.
\Cref{G_rich_Pext_rovnosti} is a generalization of characterization \ref{equiv_rich_bilateral} for almost rich words.
Characterization 5 is already generalized in \cite{PeSta1}, we recall it here as \Cref{almostG-Rich}.
\Cref{sec:Defect} generalizes the notion of defect.

Let us point out a drawback of our new definition. The Property
$G$-tls$(1)$, unlike  the classical definition
of richness,  requires the language of an infinite word to be closed under reversal.
Nevertheless, an infinite word can be rich in the classical sense
without having its language closed under reversal. On the other
hand, as proved in \cite{GlJuWiZa} (Proposition 2.11), any
recurrent rich word has its language closed under reversal.
Therefore, on the set of recurrent words  both definitions
coincide.

Some generalizations were already made for groups $G = \{ \id, \Theta \}$.
As already mentioned, in the articles \cite{KaMa2008,LuLu},
the  reversal mapping $R$ is replaced by  an arbitrary involutive
antimorphism $\Theta$ and $\Theta$-palindrome is defined as a word $v$
satisfying $\Theta(v)=v$. Let us denote
 by $\PalT(w)$
the set of $\Theta$-palindromic factors occurring in $w$. As shown
in \cite{Sta2010},
\begin{equation}\label{eq:horni_mez_poctu_Tpali}
\# \PalT(w) \leq |w| + 1 - \gT(w),
\end{equation}
where $\gT(w) := \# \left \{ \{ a, \Theta(a)\} \mid a \in \A, a
\text{ occurs in } w \text{ and } a \neq \Theta(a) \right \}$.
Analogously to the classical richness, $\Theta$-richness is
introduced in \cite{Sta2010} as follows.  A finite word $w$ is \emph{$\Theta$-rich} if the
equality in \eqref{eq:horni_mez_poctu_Tpali} holds. An infinite
word is \emph{$\Theta$-rich} if all its factors are $\Theta$-rich.

Generalizations of some characterizations in \Cref{equiv_rich} for $\Theta$-rich
words are presented in \cite{Sta2010} as well.

\begin{remark}
In \cite{BlBrGaLa}, the authors remark that on binary alphabet the only periodic $E$-rich words are of period $2$. On the alphabet $\{ 0, 1\}$, the only finite $E$-rich words are the following:
$$
(01)^n, \ (10)^n, \ (01)^n0, \text{ and } (10)^n1
$$
for some $n \in \N$.

As stated in \cite{BrHaNiRe} for the reversal mapping $\Tr$, if a periodic word $w^\omega$ is closed under an involutive antimorphism $\Theta$, then $w$ can be written as a concatenation of two $\Theta$-palindromes.
This gives a restrictive (and only necessary) condition for periodic $G$-rich words for a larger group $G$.
Examples known to satisfy this condition are periodic generalized Thue-Morse words (see \cite{Sta2011}): words
$\tt_{b,m}$ defined in \Cref{ex:genTM} for $b \equiv 1 \pmod{m}$. These words are closed under the group $I_2(m)$.
\end{remark}

\section{Tools for characterization  of $G$-rich words}

Classical rich and almost rich words can be described using the
notions of return words and longest palindromic suffix. In order
to find a suitable description of $G$-richness, we first
introduce
 $G$-analogies of these notions and in the sequel we
demonstrate their efficiency. Let us recall that in all
definitions and statements in the sequel,  the symbol $G$ stands
for a finite subgroup of $AM (\mathcal{A}^*)$ such that it contains at
least one antimorphism.

\begin{definition}
A word $w \in \mathcal{A}^*$ is said to be \mbox{\emph{$G$-palindrome}}
if there exists an antimorphism $\Theta \in G$ such that
$w=\Theta(w)$.
\end{definition}

\begin{remark}
As already mentioned, if a word $w$ contains all letters of $\A$ and $\Theta(w) = w$ for an antimorphism $\Theta \in G$, then $\Theta$ is an involution.
However, non-involutive antimorphisms of $G$ also contribute to the number of generalized palindromes.
If $N$ is $G$-distinguishable and $w$ is a $\Theta$-palindrome such that $|w| \geq N$, then for any antimorphism $\Psi$ in $G$ the word $\Psi(w)$ is a $G$-palindrome, namely a $(\Psi \Theta \Psi^{-1})$-palindrome.
\end{remark}

\begin{definition} Let $w, v \in \mathcal{A}^*$.
\emph{$G$-occurrence} of a word $w$ in a word $v$ is an index $i$ such
that there exists $w' \in [w]$ having occurrence $i$ in $v$.

We say that $w$ is \emph{$G$-unioccurrent} in $v$ if $w$ occurs in $v$
and there is no other \mbox{$G$-occurrence} of $w$ in $v$.

\end{definition}

\begin{definition}
Let $\uu  \in \mathcal{A}^\mathbb{N}$ be an infinite word  and $w
\in \L(\uu)$. A factor $v \in \L(\uu)$ of length $|v| >|w|$ is
called \emph{complete $G$-return word of $[w]$ in $\uu$} if
\begin{itemize}
\item a prefix and a suffix of $v$ belong to $[w]$ and
\item $v$ contains no other $G$-occurrence of $w$.
\end{itemize}

\vspace{\baselineskip}

We say that $v' \in \Lu$ is a \emph{$G$-return word} of $[w]$ in $\uu$ if
for some $w' \in [w]$ the word $v'w'$ is a complete $G$-return
word of $[w]$.

\end{definition}

\begin{definition}\label{de:Glps}
A suffix $w$  of a word $v \in \mathcal{A}^*$ is called
\emph{$G$-longest palindromic suffix} of $v$  if
\begin{itemize}
\item $w$ is a $G$-palindrome and
\item $|w| \geq |w'|$ for any
$G$-palindromic suffix $w'$ of $v$.
\end{itemize}

\noindent The $G$-longest palindromic suffix of $v$  is denoted by $G$-${\rm lps}(v)$.

\end{definition}

\begin{remark}  
If no nonempty suffix $w$ of a nonempty word $v\in \mathcal{A}^*$ is a $G$-palindrome, then $G$-${\rm lps}(v) = \varepsilon$.
(Let us recall that any index $i\in \{1,2,\ldots,n\}$ is defined to be an occurrence of $\varepsilon$ in  $v = v_1v_2\cdots v_{n-1}$.)
The shortest example of such phenomenon is a one-letter word $a \in \A$ and a group $G$ containing no antimorphism that fixes the letter $a$.
It may occur only in the case when $G$ does not contain the reversal mapping $\Tr$. 
Clearly, the converse is not true: if $R \not \in G$, then there may exist an antimorphism in $G$ that fixes the last letter of $w$.
 \end{remark}

Let us demonstrate these definitions on an example.

\begin{example}
Take again the Thue-Morse word and the group $I_2(2) = \{\id, \Tr, E, E \Tr \}$.
The word $011$ is a factor of the word, we have
$
[011] = \{011, 110, 100, 001\}.
$
All $I_2(2)$-occurrences of $011$ in the prefix $p = 01101001100$ of the Thue-Morse word form the following set of indices:
$$
\{0, 1, 4, 5, 6, 7, 8\}.
$$
The factor $001100$ is $I_2(2)$-unioccurrent in $p$.
The following $I_2(2)$-complete return words of $[011]$ are contained in $p$:
$$
\{0110, 110100, 1001, 0011, 0110, 1100 \}.
$$

The $I_2(2)$-longest palindromic suffix of $p$ is
$$
I_2(2)\text{-lps}(p) = 001100.
$$


\end{example}


\section{$G$-richness and $G$-return words}

In this section, namely in \Cref{CharakteristikaReturn}, we demonstrate  that the notions of complete $G$-return word and $G$-palindrome can grasp the essence of $G$-richness.
We prove that if $\uu$ is an infinite word with language closed under $G$, then
$\uu$ is $G$-rich if and only if for all $w \in \Lu$ every complete $G$-return word of $[w]$ is a $G$-palindrome.
The theorem generalizes characterization \ref{equiv_rich_returnpalindromicky} in \Cref{equiv_rich}.


To describe the generalized characterization of almost $G$-richness, we introduce the following property, called Property $\Gc{G}{N}$.
Again, the name of the property contains the two parameters $G$ and $N$ so that we can easily keep track of them.
The abbreviation ``crw'' stands for complete return word, since the property is based on complete $G$-return words.


\begin{definition} Let $N \in \N$. We say that $\uu \in \mathcal{A}^\mathbb{N}$ satisfies \emph{Property $\Gc{G}{N}$} if for all $w \in \Lu$, $|w| \geq N$, every complete
$G$-return word of $[w]$ is a $G$-palindrome.
\end{definition}

Before proving the main result of this section we introduce several lemmas.

\begin{lem} \label{lem:rich_na_crw} Let $N \in \N$ and $\uu \in \mathcal{A}^\mathbb{N}$  satisfy Property $\Gg{G}{N}$.
If $w$ is a factor of $\uu$ such that $|w| \geq N$ and $v$  be a complete $G$-return word of $[w]$ in
$ \uu$ starting in $w$,
then there exist a letter $a \in
\mathcal{A}$ and an antimorphism $\Theta \in G$  such  that $wa$
is a prefix of $v$ and $\Theta(a) \Theta(w)$ is a suffix of $v$.
\end{lem}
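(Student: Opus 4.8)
The plan is to read the complete $G$-return word $v$ as a walk in the Rauzy graph of order $n := |w|$ and to exploit the tree-like structure granted by Property $\Gg{G}{N}$ at level $n \geq N$. Letting $f_i$ be the factor of length $n$ occurring at position $i$ in $v$, the sequence $f_0 = w, f_1, \ldots, f_m$ (with $m = |v| - n$ and $f_m \in [w]$) is a walk whose consecutive vertices are joined by the length-$(n+1)$ factors of $v$; writing $p := wa$ for its prefix of length $n+1$ and $q$ for its suffix of length $n+1$, my goal is to show $q = \Theta(p) = \Theta(a)\Theta(w)$ for some antimorphism $\Theta \in G$. The hypothesis that $v$ is a complete $G$-return word of $[w]$ says exactly that no $f_i$ with $0 < i < m$ lies in $[w]$, so the image of this walk in $\Gamma_n(\uu)$ meets (the point representing) $[w]$ only at its two ends.

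First I would treat the case where the walk is a single loop at $[w]$. By Property $\Gg{G}{N}$ every loop of $\Gamma_n(\uu)$ is a $G$-palindrome, so the edge underlying $v$ satisfies $\Theta(v) = v$ for some antimorphism $\Theta \in G$. Since $p = wa$ is a prefix of $v$ and $\Theta$ is an antimorphism, $\Theta(wa) = \Theta(a)\Theta(w)$ is a suffix of $\Theta(v) = v$, which is precisely the claim; in particular $\Theta(w)$ is the suffix of $v$ of length $n$.

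In the remaining case the walk leaves $[w]$ along a loop-free edge, and here I would use that the loop-free part of $\Gamma_n(\uu)$ is a tree. After the first step the walk enters the subtree hanging off the first visited neighbour $[y]$ of $[w]$; since in a tree the only path back to $[w]$ runs through the bridge $\{[w],[y]\}$, the walk cannot return to $[w]$ without re-traversing this same undirected edge as its last step (any other return route would force a second visit to $[w]$ in between). Hence $p$ and $q$ are two representatives of one undirected edge of $\Gamma_n(\uu)$, so they lie in a single $G$-orbit and $q = \nu(p)$ for some $\nu \in G$. Because $p$ traverses this edge from $[w]$ towards $[y]$ while $q$ traverses it in the opposite orientation, $\nu$ must reverse the orientation of the edge; a morphism maps a directed edge $r \to s$ to $\nu(r) \to \nu(s)$ and so preserves orientation, whence $\nu = \Theta$ is an antimorphism and $q = \Theta(wa) = \Theta(a)\Theta(w)$, as required.

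The main obstacle is that $w$ need not be special, so $[w]$ is not literally a vertex of $\Gamma_n(\uu)$ but an interior point of some edge, and the closed walk is based at that interior point rather than at a vertex. The tree argument still applies, since in a tree the unique path between the two endpoints of that edge passes through the interior point, forcing the return traversal to use the same edge; what requires care is the positional bookkeeping. Concretely, $w$ being neither left nor right special, its one-sided extensions towards the nearest special factors are forced, so the initial and terminal deterministic segments of $v$ are exchanged by the very antimorphism $\Theta$ reversing this edge, and matching the letter $a$ after the prefix $w$ with the letter before the suffix $\Theta(w)$ yields $q = \Theta(a)\Theta(w)$. Thus the crux is to pin down the common undirected edge carrying both $p$ and $q$ and to verify that its two orientations differ by an antimorphism rather than a morphism; the rest is routine tracking of positions in $v$.
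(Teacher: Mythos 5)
Your handling of the case where $w$ is special is sound and coincides with the paper's own argument: if the walk is a single loop, Property $\Gg{G}{N}$ makes $v$ a $G$-palindrome; otherwise the tree structure forces the walk to re-enter $[w]$ along the same undirected edge, and since a morphism sends a directed edge $[w]\to[z]$ to another edge $[w]\to[z]$, the element of $G$ relating the first and last edge traversals must be an antimorphism. (Calling the length-$(n+1)$ words $p,q$ ``representatives of the edge'' is sloppy -- the edges of $\Gamma_n(\uu)$ are the full factors between consecutive special occurrences -- but the conclusion $q=\Theta(p)$ does follow once the first edge $e$ and the last edge $f$ satisfy $f=\Theta(e)$.) The genuine gaps are in your final paragraph, where $w$ is not special, and there are two of them.

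First, your argument presupposes that $w$ lies in the interior of some edge of $\Gamma_n(\uu)$, i.e., that a special factor of length $n$ exists. If no factor of length $n$ is special ($\uu$ is then periodic), the graph $\Gamma_n(\uu)$ has no vertices or edges and your framework says nothing; the paper treats this case separately, showing that if the suffix of $v$ in $[w]$ were a morphic image $\mu(w)$, then $\uu$ would have a period $v'\mu(v')\cdots\mu^{k-1}(v')$ containing no antimorphic image of $w$, contradicting closedness under $G$. Second, even when special factors exist, your key step -- ``in a tree the unique path between the two endpoints of that edge passes through the interior point, forcing the return traversal to use the same edge'' -- is vacuous when the edge containing $w$ is a loop: its two endpoints coincide, the unique path between them is empty, and removing the loop disconnects nothing, so no separation argument rules out that the final member of $[w]$ in $v$ is a morphic image. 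Handling this needs a different idea, e.g., using that the loop is a $G$-palindrome by Property $\Gg{G}{N}$ and analyzing the positions of $G$-occurrences of $w$ inside it. The paper sidesteps both difficulties by a different route: if the suffix $w'\in[w]$ of $v$ equals $\Theta(w)$ for an antimorphism, unique one-sided extensions finish the proof at once (no graph needed); if $w'=\mu(w)$ for a morphism, it extends $w$ to the shortest right-special word $wq$, checks that $v\mu(q)$ is a complete $G$-return word of $[wq]$, applies the already-proved special case to get $\mu(wq)=\Theta(wq)$, and derives a third $G$-occurrence of $w$ inside $v$, a contradiction. As written, your sketch does not close either gap, and they are precisely where the substance of the non-special case lies.
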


Note that we do not assume explicitly that $\Lu$ is closed under $G$.
However, this property is satisfied since it is part of the definition of Property $\Gg{G}{N}$.

\begin{proof}
Take $n \geq N$, $w \in \L_n(\uu)$, and $v$ a complete $G$-return
word of $[w]$ starting in $w$.
Denote $w'\in [w]$ the suffix of $v$ of length $n$.

At first we suppose that $w$ is a special factor of $\uu$.
We consider the following two cases.
We exploit the graphs of symmetries $\overrightarrow{\Gamma}_n(\uu) =
(V,\overrightarrow{E})$ and ${\Gamma}_n(\uu) = (V,{E})$.

\begin{enumerate}

\item  If besides the prefix  $w$ and the suffix $w'$ the complete
$G$-return word $v$ contains no other occurrences of  a special factor
of length $n$, then $[v]$ is an edge in $E$ which starts and ends
in the same vertex $[w]$.
 The edge $[v]$ is thus a loop and according to the definition  of Property
$\Gg{G}{N}$, the factor $v$ is a $G$-palindrome. Obviously, $v$
has the property stated in the claim.

\item Let $v = v_0v_1 \cdots v_m$ contain a special factor $z
\notin [w]$ of length $n$ at the position $i$, i.e., $z = v_i \cdots v_{i+n-1}$. We may suppose
without loss of generality that $i$ is the least index with
this property. Then $[z]$ is a vertex of
$\overrightarrow{\Gamma}_n(\uu)$ and a prefix of $v$ is an edge in
$\overrightarrow{\Gamma}_n(\uu)$ starting in $[w]$ and ending in
$[z]$. Since the graph obtained from $\Gamma_n(\uu)$ by removing
loops is a tree, the complete $G$-return word $v$ has a suffix $f
\in \overrightarrow{E}$ such that a prefix of  $f$ belongs to
$[z]$ and its suffix belongs to $[w]$. Moreover, there exists
an antimorphism $\Theta$ such that $f = \Theta(e)$. As $|e| = |f|
> |w|$,  the factor $v$ has the property stated in the claim.
\end{enumerate}

Let us now suppose that $w$ is  not a  special factor and thus $w$
has a unique right extension, say $a$. If $w'=\Theta(w)$ for some
antimorphism $\Theta\in G$, then $\Theta(w)$ has a unique left
extension $\Theta(a)$ and therefore    $\Theta(a)\Theta(w)$ is a
suffix of the complete $G$-return word $v$ as stated in the claim.

To finish the proof, it is enough to consider the situation when $w' = \mu(w)$
for some morphism $\mu\in G$  and $w$ is not a special factor.
We discuss two separate cases.

\begin{enumerate}
\item There exists no special factor of length $n$.\\
In this case $\uu$ is periodic. Denote $v'$ the word such that $v
= v'\mu(w)$. Because no special factor  of length at least $n$
exists, the factor $v=v'\mu(w)$ is the unique right prolongation of $w$
of length $|v|$. As $\Lu$ is closed under $G$, $\mu^\ell(v) =
\mu^\ell(v')\mu^{\ell+1}(w)$ is the unique right prolongation of
$\mu^{\ell}(w)$ of given length. In particular, for $\ell = 1$ it
implies  $v'\mu(v')\mu^2(w) \in \L(\uu)$. Repeating this argument
for $\ell =2,3,\ldots$ we deduce that $v'\mu(v')\mu^2(v') \cdots
\mu^{\ell}(v')\mu^{\ell +1}(w) \in\L(\uu)$. Therefore, the factor
$v'\mu(v')\mu^2(v') \cdots \mu^{k-1}(v')$, where  $k$ is the order
of the morphism $\mu$, is a period of $\uu$ which does not
contain any antimorphic image of $w$ - a contradiction.

\item There exists a special factor of length $n$.\\
Consequently, there exists a unique $q$ such that $wq$ is right special and no proper
prefix of $wq$ is right special.  The factor $wq$ has only one occurrence of
a factor $\nu(w)$ for some morphism $\nu\in G$ - in the opposite case,
we can find a shorter prolongation of $w$ which is right special.  Since $v$
has suffix $\mu(w)$, we deduce  $|wq| < |v|$.  As  $\mu$ is a
morphism, $\mu(w)\mu(q)$ is the only right prolongation of $\mu(w)$
and thus $v\mu(q)$ is a complete $G$-return word of $[wq]$.  For the special
factor  $wq$, we may now use  the first part of the proof and thus
 find an antimorphism $\Theta$ such that $\mu(w)\mu(q) = \Theta
(wq) = \Theta(q)\Theta(w)$. Applying the morphism $\mu^{-1}$, we
get $wq=\mu^{-1} \Theta (q) \mu^{-1} \Theta (w)$. Together with
the inequality $|wq| < |v|$, it implies a contradiction with the fact
that $v$ is a complete $G$-return word of $[w]$. \qedhere
\end{enumerate}
\end{proof}

\begin{lem}\label{GraphImplikujeCRet}
Let $\uu \in \mathcal{A}^\N$ and $N \in \N$. If $\uu$ has Property $\Gg{G}{N}$, then it has Property $\Gc{G}{N}$.
\end{lem}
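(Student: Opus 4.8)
The plan is to fix a complete $G$-return word $v$ of $[w]$ with $|w| = n \geq N$ and show that $v$ is a $G$-palindrome; since Property $\Gc{G}{N}$ quantifies over all such $v$ and all $w$ with $|w| \geq N$, this suffices. Since a prefix of $v$ lies in $[w]$ by the very definition of a complete $G$-return word, I may rename $w$ to be that prefix (this does not change the class $[w]$, hence does not change the statement), so that $v$ \emph{starts with} $w$. I would then argue by induction on $|v| - |w| \geq 1$, at each step invoking \Cref{lem:rich_na_crw} as a black box; its hypotheses are met because $\uu$ has Property $\Gg{G}{N}$ and $|w| \geq N$.

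For the base case $|v| = |w| + 1$, \Cref{lem:rich_na_crw} supplies a letter $a \in \A$ and an antimorphism $\Theta \in G$ such that $wa$ is a prefix and $\Theta(a)\Theta(w)$ a suffix of $v$. As both of these words have length $n+1 = |v|$, each of them is all of $v$, so $v = wa = \Theta(a)\Theta(w) = \Theta(wa) = \Theta(v)$; thus $v$ is a $\Theta$-palindrome, and in particular a $G$-palindrome.

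For the inductive step $|v| > |w| + 1$, I would again take $a$ and $\Theta$ from \Cref{lem:rich_na_crw}, so that $wa \in [wa]$ is a prefix of $v$ and $\Theta(wa) = \Theta(a)\Theta(w) \in [wa]$ is a suffix of $v$. The core of the argument is to verify that $v$ is then a complete $G$-return word of the longer class $[wa]$: it has a prefix and a suffix in $[wa]$, and any further $G$-occurrence of $wa$ would force a $G$-occurrence of $w$. Indeed, if $\mu(wa)$ occurs at position $i$ for a morphism $\mu \in G$, then $\mu(w)$ occurs at $i$, whereas if $\mu$ is an antimorphism then $\mu(wa) = \mu(a)\mu(w)$ and $\mu(w)$ occurs at $i+1$; either way this is a $G$-occurrence of $w$, and because $v$ is a complete $G$-return word of $[w]$ it must be one of the two already accounted for, namely the prefix at position $0$ or the suffix at position $|v|-n$. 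Tracing the one-letter shift back shows that the corresponding occurrence of $wa$ can only be the prefix or the suffix of $v$, so no new $G$-occurrence of $wa$ appears. Hence $v$ is a complete $G$-return word of $[wa]$ starting with $wa$, with $|wa| = n+1 \geq N$ and $|v| - |wa| < |v| - |w|$, so the induction hypothesis applies and yields that $v$ is a $G$-palindrome.

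I expect the only delicate point to be this occurrence-counting step: one must correctly handle the positional shift introduced when a $G$-occurrence of $wa$ comes from an antimorphism rather than a morphism, and check that both the prefix and the suffix occurrences of $wa$ are recovered while no spurious ones are created. Once this is in place the scheme closes up cleanly, and, crucially, no compatibility between the antimorphisms $\Theta$ produced at the various steps of the induction is needed: only the antimorphism delivered by the final (base-case) application is used to witness that $v$ is a $G$-palindrome.
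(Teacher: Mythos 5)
Your proposal is correct and takes essentially the same approach as the paper: the paper runs the identical extension procedure iteratively (growing the prefix $w \mapsto wa \mapsto wab \mapsto \cdots$ via \Cref{lem:rich_na_crw} until it exhausts $v$, at which point the last application of that lemma yields $v = \Theta(v)$), and your induction on $|v|-|w|$ is just a formalization of that loop. The only difference is that you spell out the occurrence-shifting argument showing that $v$ is a complete $G$-return word of $[wa]$, a step the paper's proof attributes to \Cref{lem:rich_na_crw} and leaves implicit.
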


\begin{proof}
Let $v$  be a complete $G$-return word of $[w]$ starting in $w$
for a factor $w$ such that $|w| \geq N$.
Denote by $a$ a letter such that $wa$ is a prefix of $v$.

If $wa = v$, then \Cref{lem:rich_na_crw} implies that $v$ is a
$G$-palindrome.

If $wa \neq v$, then according to \Cref{lem:rich_na_crw}, $v$ is a
complete $G$-return word of $[wa]$ as well. We apply the procedure again on
$wa$. We find a letter $b$ such that $wab$ is a prefix of $v$. If
$wab= v$, the $v$ is a $G$-palindrome, otherwise $v$ is a complete
$G$-return word of $[wab]$.

We continue in this way until the procedure stops and we conclude
that $v$ is a \mbox{$G$-palindrome}.
\end{proof}

\begin{remark} \label{rem:alternuji}
As a consequence of the previous lemma, we have that
for an infinite word $\uu$ satisfying Property $\Gg{G}{N}$,  the occurrences
of morphic and antimorphic images of a factor $w$ satisfying $|w| \geq N$ alternate.
This consequence of the previous claim is an analogy to the claims
stated in \cite{GlJuWiZa} for rich words and in \cite{Sta2010} for
$\Theta$-rich words: given a $\Theta$-rich word for an involutive antimorphism $\Theta$, the occurrences of $w$ and $\Theta(w)$ in the word alternate.
\end{remark}

\begin{thm} \label{thm:Ggenerovane}
If there exists an almost $G$-rich word, then $G$ is generated by the set of its involutive antimorphisms.
\end{thm}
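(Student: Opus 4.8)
The plan is to show that the subgroup $H := \langle G^{(2)} \rangle$ generated by the involutive antimorphisms of $G$ is all of $G$, assuming some $\uu$ is almost $G$-rich, say with Property $\Gg{G}{N}$. Fix an integer $N' \geq N$ that is also $G$-distinguishing on $\uu$ (such an $N'$ exists, since all sufficiently large values are $G$-distinguishing), and work only with factors of length at least $N'$. The first step is a local fact: every ``step'' between consecutive $G$-occurrences of a long factor is realized by an involutive antimorphism. Fix $w$ with $|w| \geq N'$ and list the $G$-occurrences of $w$ along $\uu$; consecutive occurrences delimit complete $G$-return words of $[w]$, which are $G$-palindromes by \Cref{GraphImplikujeCRet}. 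If $v$ is such a return word, with length-$|w|$ prefix $w_i$ and suffix $w_{i+1}$ and with $v = \Phi(v)$ for an antimorphism $\Phi \in G$, then matching prefix and suffix gives $w_{i+1} = \Phi(w_i)$ and $w_i = \Phi(w_{i+1})$, whence $\Phi^2(w_i) = w_i$. Since $\Phi^2$ is a morphism agreeing with $\id$ on a factor of length $\geq N'$, the $G$-distinguishing property (which, by the remark following \Cref{def:Gdistinguishing}, also separates distinct morphisms) forces $\Phi^2 = \id$, so $\Phi \in G^{(2)} \subseteq H$.

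Next I would upgrade this to an orbit statement. Because $\L(\uu)$ is closed under $G$ and $\uu$ is recurrent, every element of $[w]$ occurs in $\uu$, so the factors realized at the $G$-occurrences of $w$ run through all of $[w]$; as consecutive ones differ by an element of $G^{(2)} \subseteq H$, the whole class lies in a single $H$-orbit. Hence $H w = [w] = G w$ for every $w$ with $|w| \geq N'$. In particular, for any such $w$ and any $g \in G$ we may write $g(w) = h(w)$ with $h \in H$, so $h^{-1} g \in \mathrm{Stab}_G(w)$ and thus $G = H \cdot \mathrm{Stab}_G(w)$. It therefore suffices to exhibit a single long factor $w_0$ with $\mathrm{Stab}_G(w_0) \subseteq H$. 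At the $G$-distinguishing level the stabilizer is small: $\mathrm{Stab}_G(w_0)$ equals $\{\id\}$ or $\{\id, \Psi\}$ for one antimorphism $\Psi$ fixing $w_0$, since two morphisms (respectively two antimorphisms) fixing $w_0$ would coincide on $w_0$.

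This produces a dichotomy, and the periodic branch is where the real work lies. If some factor $w_0$ with $|w_0| \geq N'$ is not a $G$-palindrome, then $\mathrm{Stab}_G(w_0) = \{\id\} \subseteq H$ and we are done. Otherwise every factor of length $\geq N'$ is a $G$-palindrome, and I would first show this forces $\uu$ to be periodic. For such an $x$ and any extension $axb \in \L(\uu)$, applying the unique fixing antimorphism to $axb$ and comparing the central block shows $\Psi_{axb} = \Psi_x$ and $a = \Psi_x(b)$; hence each one-sided extension of $x$ determines the other, so $\#\Bext(x) = \#\Lext(x) = \#\Rext(x)$. Substituting into $\b(x) = \#\Bext(x) - \#\Lext(x) - \#\Rext(x) + 1$ and summing over $\L_n(\uu)$ gives $\Delta^2\C(n) = -\Delta\C(n)$, so $\Delta\C(n+1) = 0$ for all $n \geq N'$; constant factor complexity together with recurrence makes $\uu$ periodic.

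To finish the periodic branch, let $L$ be the period and choose $w_0$ of length at least $\max(N', 2L)$. Such a factor contains a full period, hence all letters of $\A$. Being a $G$-palindrome whose fixed word contains every letter, its fixing antimorphism $\Psi$ is an involution (an antimorphism with a fixed point containing all letters of $\A$ is involutive), so $\Psi \in G^{(2)} \subseteq H$ and $\mathrm{Stab}_G(w_0) = \{\id, \Psi\} \subseteq H$. In either case $G = H \cdot \mathrm{Stab}_G(w_0) = H$, which is the claim. I expect the periodic case to be the main obstacle: one must exclude a lurking index-two subgroup $H$ whose complementary antimorphisms fix only letter-deficient factors, and it is precisely the all-letters argument that rules this out.
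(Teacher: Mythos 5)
Your proof has a genuine gap at its foundation: the parenthetical claim that a $G$-distinguishing integer $N'\geq N$ exists ``since all sufficiently large values are $G$-distinguishing.'' What the paper states (and what is true) is only that the set of $G$-distinguishing integers is upward closed; it need not be nonempty. Concretely, $n$ fails to be $G$-distinguishing exactly when some $w\in\L_n(\uu)$ is fixed by a non-identity morphism $\mu\in G$ (take antimorphisms $\Theta$ and $\Theta\mu$), i.e.\ when $w$ uses only letters fixed by $\mu$. So if $G$ contains a morphism $\mu\neq\id$ whose letter permutation has fixed points, and $\uu$ has arbitrarily long factors over that fixed sub-alphabet, then \emph{no} integer is $G$-distinguishing. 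This situation is compatible with recurrence (only uniform recurrence would exclude it, since then every letter occurs with bounded gaps and every long factor contains all letters) --- but \Cref{thm:Ggenerovane} does not assume uniform recurrence, the definition of almost $G$-richness via Property $G$-tls$(N)$ does not involve distinguishing, and you give no argument that almost $G$-richness rules this out. Since every later step of your proof invokes the distinguishing property for factors that need not contain all letters --- the deduction $\Phi^2=\id$, the uniqueness of $\Psi_x$, the identity $\Psi_{axb}=\Psi_x$ in the periodicity argument, and the structure of $\mathrm{Stab}_G(w_0)$ --- the gap propagates through the whole argument.

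The paper's proof shows how to avoid this entirely: it fixes a \emph{single} factor $w$ of length at least $N$ containing all letters of $\A$ (a long prefix works), and notes that for such $w$ the needed separation holds pointwise --- an antimorphism of $G$ fixing a word that contains every letter is an involution, and two elements of $G$ of the same type agreeing on such a word are equal, because their letter permutations agree on all of $\A$. Walking along consecutive $G$-occurrences of $[w]$ exactly as in your first step (via \Cref{lem:rich_na_crw} and \Cref{GraphImplikujeCRet}), every step is then an involutive antimorphism, and since each $\nu(w)$ occurs in $\uu$, one gets $\nu=\Theta_1\cdots\Theta_s$ with all $\Theta_i$ involutive. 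If you substitute ``contains all letters'' for ``has $G$-distinguishing length'' your argument is repaired and collapses to the paper's. A secondary remark: even granting your distinguishing assumption, the whole Case~B detour through periodicity is superfluous, because the same squaring trick you use for $\Phi$ applies to stabilizers: if an antimorphism $\Psi\in G$ fixes $w_0$ with $|w_0|\geq N'$, then $\Psi^2$ is a morphism fixing $w_0$, hence $\Psi^2=\id$ and $\mathrm{Stab}_G(w_0)\subseteq\{\id,\Psi\}\subseteq H$ for \emph{every} long factor, $G$-palindromic or not.
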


\begin{proof} 
Let $\uu$ be an almost $G$-rich word.
Let $N$ be an integer such that $\uu$ satisfies the property $G$-tls($N$).
Let $w$ be a factor of length at least $N$ and such that all letters occur in it.
According to \Cref{lem:rich_na_crw}, any
occurrence of a word from $[w]$ is a $\Theta$-image of the left
closest  occurrence of a factor from $[w]$ for some antimorphism $\Theta \in G$.
According to \Cref{GraphImplikujeCRet}, all complete $G$-return words of $[w]$ are $G$-palindromes.
Thus, since all letters occur in $w$, such antimorphism $\Theta$ is involutive. 
Therefore, for any $\nu \in G$, the factor $\nu(w)$
occurring in  $\uu$ can be written as $\nu(w) = \Theta_1\Theta_2
\cdots \Theta_s(w)$, where $\Theta_1\Theta_2 \cdots \Theta_s$ is a
sequence of involutive antimorphisms.
Since $w$ contains all letters, the number $|w|$ is $G$-distinguishing, and thus according to \Cref{def:Gdistinguishing} the equality $\nu(w) = \Theta_1\Theta_2 \cdots \Theta_s(w)$ implies $\nu = \Theta_1\Theta_2 \cdots \Theta_s$.

In other words, any element $\nu$ of the group $G$ can be written as a composition of involutive antimorphisms, i.e.,
the group is generated by involutive elements.
\end{proof}


The following lemma is the converse of \Cref{GraphImplikujeCRet}.
However, we need to add an explicit assumption of closedness under $G$.
In \Cref{GraphImplikujeCRet}, this assumption is hidden in the definition of Property $\Gg{G}{N}$,
which includes it (unlike Property $\Gc{G}{N}$).

\begin{lem}\label{CRetImplikujeGraph}
Let $\uu$ be an infinite word with language closed under $G$ and $N \in \N$.
If $\uu$ satisfies Property $\Gc{G}{N}$, then it satisfies Property $\Gg{G}{N}$.
\end{lem}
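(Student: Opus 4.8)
The plan is to verify the three clauses of Property $\Gg{G}{N}$ directly for every $n\geq N$. Closedness of $\Lu$ under $G$ is part of the hypothesis, so only two things remain: (i) each loop of $\Gamma_n(\uu)$ is a $G$-palindrome, and (ii) the graph $T$ obtained from $\Gamma_n(\uu)$ by deleting loops is a tree. Throughout I would use the standing fact that every element of $G$ sends special factors to special factors — a morphism preserves left/right specialness, an antimorphism interchanges the two — so the set of special factors of length $n$ is $G$-invariant and each vertex class $[w]$ consists of special factors only. Clause (i) is then immediate: if $[e]$ is a loop at a vertex $[w]$ of $\Gamma_n(\uu)$ with $n\geq N$, then by the definition of an edge the prefix and suffix of $e$ of length $n$ lie in $[w]$ and $e$ has no further occurrence of a special factor of length $n$; since every element of $[w]$ is special, $e$ has no internal $G$-occurrence of $w$, so $e$ is a complete $G$-return word of $[w]$, and Property $\Gc{G}{N}$ forces it to be a $G$-palindrome.

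The engine of the argument is a reflection principle for complete $G$-return words. Let $v$ be a complete $G$-return word of a class $[w]$ with $|w|=n\geq N$; by Property $\Gc{G}{N}$ there is an antimorphism $\Theta\in G$ with $\Theta(v)=v$. List the special factors of length $n$ occurring in $v$ by increasing position as $s^{(0)},s^{(1)},\dots,s^{(k)}$, so that $s^{(0)},s^{(k)}\in[w]$ and $s^{(j)}\notin[w]$ for $0<j<k$, and let $e_j$ be the factor of $v$ running from $s^{(j-1)}$ to $s^{(j)}$. By the definition of the edges of $\overrightarrow{\Gamma}_n(\uu)$, each $e_j$ is a directed edge and contains no internal special factor of length $n$, so the $e_j$ are exactly the edges traversed by $v$. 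Now $\Theta$ is an order-reversing bijection of the factors of $v$ that preserves specialness and fixes $v$; hence it sends the length-$n$ factor starting at position $p$ to the one starting at $|v|-n-p$, and so permutes the starting positions of $s^{(0)},\dots,s^{(k)}$ in reverse order. This yields $\Theta(s^{(j)})=s^{(k-j)}$ and therefore $\Theta(e_j)=e_{k+1-j}$. Passing to $G$-classes gives $[e_j]=[e_{k+1-j}]$: the sequence of undirected edges traversed by any complete $G$-return word is a palindrome.

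To establish clause (ii) I would argue by contradiction. The graph $\Gamma_n(\uu)$ is connected because $\uu$ is recurrent, so it suffices to exclude cycles in the loopless graph $T$. Suppose $T$ has a cycle and take a shortest one, with distinct vertices $[w_0],\dots,[w_{\ell-1}]$ and edges $[f_1],\dots,[f_\ell]$, $\ell\geq 2$ (either a pair of parallel edges or a simple cycle of length $\geq 3$). Using the standard correspondence between walks in the Rauzy graph of order $n$ and factors of $\uu$ — the graph of symmetries being its quotient by $G$, see \cite{PeSta1,BuLuGlZa} — I would lift this cycle, orienting each edge with a suitable element of $G$ where necessary, to a genuine factor $c\in\Lu$ whose sequence of special-factor classes of length $n$ is exactly $[w_0],[w_1],\dots,[w_{\ell-1}],[w_0]$. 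Since the intermediate classes differ from $[w_0]$ and each edge contains no internal special factor, $c$ is a complete $G$-return word of $[w_0]$ whose traversed edges are precisely the non-loop edges $[f_1],\dots,[f_\ell]$. By the reflection principle this edge sequence is a palindrome, so $[f_1]=[f_\ell]$, forcing $\{[w_0],[w_1]\}=\{[w_{\ell-1}],[w_0]\}$ and hence $[w_1]=[w_{\ell-1}]$; this is impossible for a simple cycle of length $\geq 3$, and for $\ell=2$ it would make the two parallel edges coincide. This contradiction shows $T$ is acyclic, hence a tree.

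I expect the main obstacle to be the lifting step in the last paragraph: converting an abstract cycle of the quotient graph $\Gamma_n(\uu)$ into an honest complete $G$-return word of $\uu$, where representatives must be chosen consistently along the whole walk rather than edge by edge. This is exactly where the identification of $\Gamma_n(\uu)$ as a $G$-quotient of the Rauzy graph is needed. The reflection principle, by contrast, is a clean bookkeeping of positions once one knows that some $\Theta\in G$ fixes $v$ and respects specialness, and it also explains the appearance of loops (the central edge of an odd-length palindromic walk is itself a loop), tying the tree structure back to clause (i).
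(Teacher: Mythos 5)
Your clause~(i) and your ``reflection principle'' are both correct --- the latter is essentially how the paper itself exploits the $G$-palindromicity of complete $G$-return words. The genuine gap is exactly the step you flag as the main obstacle, and it is not a repairable technicality in the form you state it: there is no ``standard correspondence'' from walks in a Rauzy graph (let alone in its $G$-quotient $\Gamma_n(\uu)$) back to factors of $\uu$. A factor determines a walk, but a walk need not be realized by any factor: the graph only encodes membership in $\L_n(\uu)$ and $\L_{n+1}(\uu)$, so two edges sharing a vertex need not have their overlapping concatenation in $\L(\uu)$. Passing to the quotient makes this worse --- consecutive edges of your cycle become incident only after applying elements of $G$, and an antimorphism reverses orientation, so even a locally consistent choice of representatives can fail, let alone a global one. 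Hence the factor $c$ realizing your shortest cycle may simply not exist, and the contradiction never gets started; note that even your parallel-edge case $\ell=2$ needs this nonexistent lift, since the two parallel edges must be concatenated into one complete $G$-return word.

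The paper avoids lifting altogether by arguing only with factors that actually occur. Given distinct vertices $[w']$ and $[w'']$, it takes a factor $p$ of $\uu$ realizing a first passage between them (such factors exist because the infinite word itself traces a walk using every edge), extends $p$ inside $\uu$ to a complete $G$-return word $v$ of $[w']$, and uses $\Theta(v)=v$ --- your reflection principle --- to see that the next realized first passage after $p$ is $\Theta(p)$; by recurrence and induction along $\uu$, \emph{all} realized first passages between $[w']$ and $[w'']$ lie in the single class $[p]$. Since any edge joining two distinct vertices is itself a realized first passage, this kills parallel edges at once, and the uniqueness of the realized connection (combined with connectedness and the fact that every edge of $\Gamma_n(\uu)$ is traversed by $\uu$ infinitely often) is what rules out longer cycles. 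If you want to keep your shortest-cycle framing, you must replace the lift by an argument of this type: follow the actual walk traced by $\uu$ through the cycle's edges and derive the contradiction from uniqueness of realized crossings, not from a hypothetical factor realizing the whole cycle.
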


\begin{proof}
Let $n \geq N$ and $w \in \L_n(\uu)$. We assume that every
complete $G$-return word of $[w]$ is a $G$-palindrome. We have to show two
properties of $\Gamma_n(\uu)$.

\begin{enumerate}
\item
\emph{Any loop in $\Gamma_n(\uu)$ is a $G$-palindrome:} \\
Since any loop  $e$ in $\Gamma_n(\uu)$  at a vertex $[w]$  is a
complete $G$-return word of $[w]$, the loop $e$ is a $G$-palindrome by our
assumption.

\item
\emph{The graph obtained from $\Gamma_n(\uu)$ by removing loops is a tree:} \\
Or equivalently, we  show that in $\Gamma_n(\uu)$ there exists
unique path between  any two different vertices $[w']$ and
$[w'']$.   Let $p$ be a factor of $\uu$ such that a prefix of $p$
belongs to $[w']$, its suffix belongs to $[w'']$ and $p$ has no
other occurrences of factor from $[w']$ or $[w'']$. Let without
loss of generality $w'$ be a prefix of $p$. Let us find   a
complete $G$-return word of $[w']$ with prefix $p$, denote it $v$.
Since  $v$ is a $G$-palindrome, the factor $\Theta(p)$ is a suffix
of $v$ for some antimorphism $\Theta \in G$. As $v$  is a complete
$G$-return word of $[w']$, $v$ has exactly two $G$-occurrences of $w'$.
The factor $v$ contains at least two
$G$-occurrences of $w''$. Therefore, the next factor
with the same properties as $p$, i.e., representing a path in the
undirected graph $\Gamma_n(\uu)$ between $[w']$ and $[w'']$, which
occurs in $\uu$ after $p$, is $\Theta(p)$. Consequently, any factor
with the same properties as $p$ belongs to the same equivalence
class $[p]$. \qedhere
\end{enumerate}
\end{proof}

\begin{thm} \label{CharakteristikaReturn} 
If $\uu$ is an infinite word with language closed under $G$, then
\begin{enumerate}
\item $\uu$ is $G$-rich if and only if for all $w \in \Lu$ every complete $G$-return word of $[w]$ is a $G$-palindrome, i.e., $\uu$ has Property $\Gc{G}{1}$;
\item $\uu$ is almost $G$-rich if and only if there exists and integer $N$ such that for all $w \in \Lu$ longer than $N$ every complete $G$-return word of $[w]$ is a $G$-palindrome, i.e., $\uu$ has Property $\Gc{G}{N}$.
\end{enumerate}
\end{thm}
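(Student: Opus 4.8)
The plan is to reduce both equivalences to the two preceding lemmas, which already establish the equivalence of Property $\Gg{G}{N}$ and Property $\Gc{G}{N}$ for a fixed threshold $N$, and then to invoke \Cref{def:G_and_almost_G_rich}, which defines $G$-richness and almost $G$-richness directly in terms of Property $\Gg{G}{N}$. Nothing beyond bookkeeping is required: the combinatorial content has already been isolated in \Cref{GraphImplikujeCRet} and \Cref{CRetImplikujeGraph}.

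For the first item, I would argue as follows. By \Cref{def:G_and_almost_G_rich}, $\uu$ is $G$-rich exactly when it has Property $\Gg{G}{1}$. \Cref{GraphImplikujeCRet} applied with $N=1$ shows that Property $\Gg{G}{1}$ implies Property $\Gc{G}{1}$. Conversely, since the hypothesis of the theorem states that $\L(\uu)$ is closed under $G$, \Cref{CRetImplikujeGraph} applied with $N=1$ shows that Property $\Gc{G}{1}$ implies Property $\Gg{G}{1}$. Chaining the two implications yields that $\uu$ is $G$-rich if and only if it satisfies Property $\Gc{G}{1}$, which is precisely the assertion that every complete $G$-return word of every $[w]$ with $w \in \Lu$ is a $G$-palindrome. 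For the second item, the same reasoning is applied with the quantifier over $N$ left in place: by \Cref{def:G_and_almost_G_rich}, $\uu$ is almost $G$-rich exactly when Property $\Gg{G}{N}$ holds for some $N \in \N$; for each such $N$, \Cref{GraphImplikujeCRet} gives Property $\Gc{G}{N}$, and conversely \Cref{CRetImplikujeGraph} recovers Property $\Gg{G}{N}$ from Property $\Gc{G}{N}$, using the assumed closedness. Hence the existence of an $N$ with Property $\Gg{G}{N}$ is equivalent to the existence of an $N$ with Property $\Gc{G}{N}$, as claimed.

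The one point that deserves explicit mention — and the reason the closedness hypothesis is stated separately in the theorem rather than being absorbed into the return-word condition — is the direction from $\Gc{G}{N}$ back to $\Gg{G}{N}$. Property $\Gg{G}{N}$ includes closedness of $\L(\uu)$ under $G$ as part of its definition, whereas Property $\Gc{G}{N}$ does not; so without the separate assumption the backward implication would fail. Because the theorem supplies this assumption, \Cref{CRetImplikujeGraph} is directly applicable and no genuine obstacle arises. I therefore expect the proof to be short, with the only care needed being to invoke the closedness hypothesis precisely where \Cref{CRetImplikujeGraph} requires it.
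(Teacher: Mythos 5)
Your proposal is correct and follows essentially the same route as the paper: the paper's proof likewise chains \Cref{GraphImplikujeCRet} and \Cref{CRetImplikujeGraph} to get the equivalence of Properties $\Gg{G}{N}$ and $\Gc{G}{N}$ for each $N$, then reads off both claims from \Cref{def:G_and_almost_G_rich}, specializing to $N=1$ for the first. Your explicit remark about where the closedness hypothesis is needed matches the paper's own commentary preceding \Cref{CRetImplikujeGraph}.
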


\begin{proof}
\Cref{GraphImplikujeCRet,CRetImplikujeGraph} together state that if $N$ is an integer,
then $\uu$ satisfies $\Gg{G}{N}$ if and only if it satisfies $\Gc{G}{N}$.
The second claim then directly follows from the definition of almost $G$-richness and Property $\Gc{G}{N}$.
The first claim is obtained if $N = 1$.
\end{proof}


\section{$G$-richness and $G$-longest palindromic suffix}

As we already stated, the classical richness  is connected to the number of occurrences of the longest palindromic suffix in  any factor.
This section aims to generalize this connection which is given by characterization \ref{equiv_rich_suffix} in \Cref{equiv_rich}.
The main result of this section is in \Cref{CharakteristikaLongestSuffix}.

In the case of classical palindromes, the longest palindromic suffix of a nonempty word is always  nonempty, but it  is not always satisfied for the $G$-longest palindromic suffix.
Therefore, the characterization of $G$-richness by the $G$-longest palindromic suffix needs a modification.
In this section we show that if $\uu \in \mathcal{A}^\mathbb{N}$ is an infinite
word with language closed under $G$, then  $\uu$ is $G$-rich if and only if for
any factor  $v \in \Lu$, its  $G$-longest palindromic suffix
 is $G$-unioccurrent in $v$ or the last letter
of $v$ is $G$-unioccurrent in $v$.

To describe the needed property we introduce the next definition of Property $\Gl{G}{N}$,
again with $G$ and $N$ as parameters and the abbreviation ``lps'' standing for longest palindromic suffix.

\begin{definition}\label{def:PropGlps} Let $N \in \N$.
 We say that $\uu \in \mathcal{A}^\mathbb{N}$ satisfies
\emph{Property $\Gl{G}{N}$} if for all $w \in \Lu$, $|w| \geq N$, either
the word $G$-${\rm lps}(w)$ is $G$-unioccurrent in $w$,
 or the suffix of $w$ of length $1$ has exactly one $G$-occurrence in $w$.
\end{definition}

\begin{lem}\label{lem:crwAlps}
 Let  $\uu \in \mathcal{A}^\mathbb{N}$.
\begin{enumerate} \item  If  $\uu$ has Property $\Gc{G}{1}$, then  $\uu$ has Property $\Gl{G}{1}$.
\item  Let $N \in \N$. If $\uu$ is uniformly recurrent and has Property
$\Gc{G}{N}$, then there exists $M \in \mathbb{N}$ such that $\uu$
has Property $\Gl{G}{M}$.
\end{enumerate}
\end{lem}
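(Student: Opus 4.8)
The plan is to prove both statements by connecting complete $G$-return words with $G$-longest palindromic suffixes. The essential idea is that if $v$ is a complete $G$-return word of some factor, then the common prefix/suffix class $[w]$ controls where $G$-longest palindromic suffixes can appear. First I would consider a factor $v \in \Lu$ and let $s = G$-${\rm lps}(v)$ be its $G$-longest palindromic suffix. The goal is to show that, under Property $\Gc{G}{N}$, either $s$ is $G$-unioccurrent in $v$ or the last letter of $v$ is $G$-unioccurrent in $v$. I would argue by contradiction: suppose $s$ has at least two $G$-occurrences in $v$ and the last letter of $v$ also has at least two $G$-occurrences.

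\textbf{The core argument for item 1.} For the first statement, I would take any $v \in \Lu$ and set $s = G$-${\rm lps}(v)$. Consider the suffix occurrence of $s$ in $v$; the suffix of $v$ of length $|s|$ lies in $[s]$. If $s \neq \varepsilon$, then $s$ is a $\Theta$-palindrome for some antimorphism $\Theta \in G$, and I would look at the shortest suffix of $v$ that is a complete $G$-return word of $[s]$ (if a second $G$-occurrence of $s$ exists). By Property $\Gc{G}{1}$, this complete $G$-return word is a $G$-palindrome, say a $\Psi$-palindrome. The key point is that a $\Psi$-palindromic suffix of $v$ that is strictly longer than $s$ would contradict the maximality of $s$ as the $G$-longest palindromic suffix. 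This forces $s$ to be $G$-unioccurrent. If instead $s = \varepsilon$, then no nonempty suffix of $v$ is a $G$-palindrome; in that case I would examine the last letter $a$ of $v$, and the same complete-$G$-return-word argument applied to $[a]$ shows that either $a$ is $G$-unioccurrent or we produce a $G$-palindromic suffix longer than $\varepsilon$, again a contradiction. This yields Property $\Gl{G}{1}$.

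\textbf{The uniform-recurrence argument for item 2.} For the second statement, the subtlety is that Property $\Gc{G}{N}$ only controls factors of length at least $N$, whereas $G$-${\rm lps}(w)$ might be shorter than $N$, so I cannot directly invoke the complete-$G$-return-word characterization for the short suffix. This is where uniform recurrence enters. Using uniform recurrence, there is a bound $R$ such that every factor of length $R$ contains an occurrence of every factor of length $N$; more usefully, I would choose $M$ large enough that any factor $w$ with $|w| \geq M$ whose $G$-longest palindromic suffix is short must nevertheless contain, as a suffix, a long enough word to which the $\Gc{G}{N}$ characterization applies. Concretely, I would set $M$ so that whenever $|w| \geq M$, either $G$-${\rm lps}(w)$ already has length $\geq N$ (and the item-1 argument applies verbatim to the relevant suffix), or the short $G$-longest palindromic suffix together with uniform recurrence forces the second occurrence far enough that analyzing the complete $G$-return word of a suitable length-$N$ factor yields the conclusion about the last letter.

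\textbf{The main obstacle.} The hardest part will be item 2: reconciling the length threshold $N$ in Property $\Gc{G}{N}$ with the possibly much shorter $G$-longest palindromic suffix, and extracting an explicit $M$ from uniform recurrence. The clean way to handle this is to observe that uniform recurrence bounds the length of $G$-return words of any fixed factor; choosing $M$ larger than the maximal complete $G$-return word length of all factors of length $N$ (finitely many, hence a finite maximum) guarantees that any sufficiently long $w$ either has a $G$-palindromic suffix of length $\geq N$ or has its last letter $G$-unioccurrent by a counting argument. I expect the bookkeeping of which suffix to apply the complete-$G$-return-word argument to — and verifying that the produced $G$-palindrome genuinely exceeds the supposed longest palindromic suffix — to be the delicate step, but structurally it mirrors the item-1 reasoning once the threshold $M$ is fixed.
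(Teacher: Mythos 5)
Your proposal is correct and follows essentially the same route as the paper: both rest on the observation that a suffix which is not $G$-unioccurrent yields a suffix of the word that is a complete $G$-return word of its class, which Property $\Gc{G}{1}$ (resp.\ $\Gc{G}{N}$) forces to be a $G$-palindrome strictly longer than the $G$-longest palindromic suffix, a contradiction. Your item~2 also matches the paper's argument: uniform recurrence supplies an $M$ making the length-$N$ suffix of any factor of length at least $M$ have a second $G$-occurrence, hence $G$-${\rm lps}$ is longer than $N$, after which the item-1 reasoning applies with $\Gc{G}{N}$.
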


\begin{proof}   Let us realize a trivial fact:  if  $w \in \L(\uu)$  has a suffix $v$ which is not
$G$-unioccurrent in $w$, then there is a suffix of $w$ which is a complete $G$-return word of $[v]$.

 To prove the first assertion, consider a factor $w \in \L(\uu)$. If the last letter of $w$, denoted $a$, is
\mbox{$G$-unioccurrent}, we have nothing to do. If $a$
is not $G$-unioccurrent in $w$, then according to the mentioned
fact and by Property  $\Gc{G}{1}$, a complete $G$-return word of $[a]$ is a $G$-palindrome of length greater than $1$. Therefore,
$G$-${\rm lps}(w) \neq \varepsilon$. We have to show that
$G$-${\rm lps}(w) $ is $G$-unioccurrent in $w$. If not,  then a
suffix of $w$ is  a complete $G$-return word of $[G$-${\rm lps}(w)]$
which, according to Property $\Gc{G}{1}$, is a \mbox{$G$-palindrome} longer than the
$G$-longest palindromic suffix of $w$  - a contradiction.

Now we prove the second assertion. Since $\uu$ is uniformly
recurrent, there exists an integer $M$ such that
 every factor $w \in \Lu$, $|w| \geq M$, contains at least two
 occurrences  of every factor $v \in \L_{N}(\uu)$. It particular, it implies that
 $w$ contains at least two \mbox{$G$-occurrences} of its suffix $z$ of
 length $N$. By the fact mentioned at the beginning of the proof,
Property $\Gc{G}{N}$ implies that $G$-${\rm lps}(w)$ is longer
than $N$  and is $G$-unioccurrent in $w$.
\end{proof}

\begin{remark} Let us note that the second part of the
previous lemma can be proved considering a  weaker assumption than
uniform recurrence of $\uu$. It is enough to assume that
 every factor $w \in \Lu$ has  only finitely many
complete $G$-return words.
\end{remark}

\begin{lem}\label{lem:lpsAcrw}
 Let  $\uu \in \mathcal{A}^\mathbb{N}$.
\begin{enumerate} \item  If  $\uu$ has Property  $\Gl{G}{1}$,  then  $\uu$ has Property $\Gc{G}{1}$.
\item  If $\uu$  has Property $\Gl{G}{N}$ such that  $N>1$, then $\uu$
has Property $\Gc{G}{N-1}$.
\end{enumerate}
\end{lem}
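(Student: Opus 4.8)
The plan is to prove both parts uniformly by applying Property $\Gl{G}{N}$ (with $N=1$ in part~(1)) to a complete $G$-return word itself. Assume $\uu$ has Property $\Gl{G}{N}$, set $N'=1$ in part~(1) and $N'=N-1$ in part~(2), fix $w \in \Lu$ with $|w| \geq N'$, and let $v$ be a complete $G$-return word of $[w]$, so that $v$ has a prefix $w_p \in [w]$, a suffix $w_s \in [w]$, and no further $G$-occurrence of $w$. First I would settle the index bookkeeping: since $|v| \geq |w|+1 \geq N'+1 \geq N$, Property $\Gl{G}{N}$ does apply to $v$ itself, which is exactly why the threshold drops from $N$ to $N-1$. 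As $[w]$ is a single $G$-orbit, there is $\sigma \in G$ with $w_p = \sigma(w_s)$; this $\sigma$ transports suffixes of $w_s$ into the prefix copy $w_p$ and drives the whole argument.

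The crux is to show that $p := G\text{-}{\rm lps}(v)$ is $G$-unioccurrent in $v$ and that $|p| \geq |w|$. I would first rule out the second alternative of Property $\Gl{G}{N}$ for $v$: the last letter $a$ of $v$ is the last letter of $w_s$, hence $\sigma(a) \in [a]$ occurs inside $w_p$ (as its last letter if $\sigma$ is a morphism, as its first letter if $\sigma$ is an antimorphism), yielding a $G$-occurrence of $a$ in the range $[0,|w|)$ distinct from the one at position $|v|-1$. Thus $a$ has at least two $G$-occurrences in $v$, the second alternative fails, and the first alternative forces $p$ to be $G$-unioccurrent; in particular $p \neq \varepsilon$. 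The same transport gives $|p| \geq |w|$: were $|p| < |w|$, then $p$ would be a suffix of $w_s$, so $\sigma(p) \in [p]$ would occur inside $w_p$, producing a second $G$-occurrence of $p$ and contradicting its unioccurrence.

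Finally I would extract the palindrome. Let $\Theta \in G$ be the antimorphism with $\Theta(p)=p$. Since $|p| \geq |w|$, the word $w_s$ is a suffix of $p$, so $\Theta(w_s) \in [w]$ is the prefix of $p$ of length $|w|$, which is a $G$-occurrence of $w$ at position $|v|-|p|$. Because the only $G$-occurrences of $w$ in $v$ lie at $0$ and $|v|-|w|$, either $|v|-|p| = |v|-|w|$, forcing $p = w_s$ (impossible, as $w_s$ has two $G$-occurrences, contradicting unioccurrence of $p$), or $|v|-|p| = 0$, giving $p = v$ and hence $v = \Theta(v)$, a $G$-palindrome. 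This establishes Property $\Gc{G}{N'}$, i.e.\ part~(1) for $N'=1$ and part~(2) for $N'=N-1$. The main obstacle, absent in the classical reversal setting, is that $w$ need not be a $G$-palindrome, so $w_s$ need not be one either and $p$ could a priori be short or even empty; it is precisely the extra alternative in Property $\Gl{G}{N}$ concerning the one-letter suffix, combined with the $\sigma$-transport, that neutralizes this difficulty.
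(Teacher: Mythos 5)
Your proof is correct, and it takes a recognizably different route from the paper's. The paper argues by contradiction: it fixes a complete $G$-return word $v$ of $[w]$ that is assumed not to be a $G$-palindrome, and applies Property $\Gl{G}{N}$ not to $v$ but to the prefix $p$ of $\uu$ ending in the \emph{leftmost} occurrence of a factor from $[v]$; it then runs a four-way case analysis on $x := G\text{-}{\rm lps}(p)$, namely $0<|x|\leq |w|$, $|w|<|x|<|v|$, $|x|=|v|$, and $|x|>|v|$, where the last case is excluded precisely by the leftmost choice of the occurrence. You instead apply Property $\Gl{G}{N}$ directly to $v$ itself, which is legitimate since $|v|\geq|w|+1\geq N$ (the same bookkeeping the paper uses to ensure $|p|\geq N$), and this choice makes the troublesome fourth case vanish, because the $G$-longest palindromic suffix of $v$ cannot exceed $|v|$; your argument is then direct rather than by contradiction. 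The working parts are identical in both proofs: rule out the one-letter alternative by transporting the last letter into the prefix copy from $[w]$, force the longest palindromic suffix to have length at least $|w|$, and use the palindromic symmetry to produce a $G$-occurrence of $w$ that the definition of complete $G$-return word pins to position $0$ or $|v|-|w|$. What the paper's more roundabout formulation buys is recorded in the remark immediately following the lemma: since its argument only ever invokes the lps-property for \emph{prefixes} of $\uu$, the hypothesis can be weakened to prefixes only. Your version, which applies the property to the factor $v$, does not yield that refinement, but as a proof of the lemma as stated it is complete and somewhat cleaner.
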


\begin{proof}
We prove both claims simultaneously. If $N = 1$, set $M = 1$.
Otherwise set $M = N - 1$.
We prove by contradiction that $\uu$ satisfies Property $\Gc{G}{M}$.

Suppose there is a factor $w \in \Lu$, $|w| \geq M$, such that
there is a factor $v \in \Lu$ which is a complete $G$-return word
of $[w]$ and is not a $G$-palindrome.
Denote $p$ the prefix of $\uu$ ending in the leftmost occurrence
of a factor from $[v]$. It is clear that $|p| \geq N$.

Since $p$ ends in a nonempty $G$-palindromic  complete $G$-return
word, the suffix of $p$ of length $1$ has at least two
$G$-occurrences
 and thus Property $\Gl{G}{N}$ assures that $p$ has a nonempty $G$-longest palindromic suffix which is $G$-unioccurrent.
Let us denote $x := G\text{-}{\rm lps} (p)$.

If $0 < |x| \leq |w|$, then $x$ has at least two $G$-occurrences
in $p$ - a contradiction.

If $|w| < |x| < |v|$, then we can find a third $G$-occurrence of
$w$ in $v$ - a contradiction with $v$ being a complete $G$-return
word of $[w]$.

If $|x| = |v|$, then we have a contradiction with $v$ not being a
$G$-palindrome.

If $|x| > |v|$, then we can find a factor $v' \in [v]$ such that its occurrence is in contradiction with the choice of the prefix $p$.
\end{proof}

\begin{remark}  Again, the assumptions of the previous lemma can be
reduced, as it is visible in our proof. It is enough to require
that any prefix $v$  of $\uu$ of length greater than or equal to $N$ has unique $G$-longest palindromic suffix or the last letter of the prefix $v$ is
$G$-unioccurrent  in $v$.

\end{remark}

\begin{thm}\label{CharakteristikaLongestSuffix} 
Let $\uu \in \mathcal{A}^\mathbb{N}$ be an infinite word with language closed under $G$.
\begin{enumerate}
\item The word $\uu$ is $G$-rich if and only if for any factor  $v \in \Lu$, its  $G$-longest palindromic suffix  is $G$-unioccurrent in $v$ or the last letter of $v$ is $G$-unioccurrent in $v$, i.e., $\uu$ has Property $\Gl{G}{1}$.
\item If $\uu$ is uniformly recurrent, then $\uu$ is almost $G$-rich if and only if there exists an integer $N$ such that for any factor $v \in \Lu$ longer than $N$, its  $G$-longest palindromic suffix  is $G$-unioccurrent in $v$ or the last letter of $v$ is $G$-unioccurrent in $v$, i.e., $\uu$ has Property $\Gl{G}{N}$.
\end{enumerate}
\end{thm}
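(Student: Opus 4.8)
The plan is to derive \Cref{CharakteristikaLongestSuffix} as a direct consequence of the equivalence between Property $\Gc{G}{N}$ and Property $\Gg{G}{N}$ (already established in \Cref{GraphImplikujeCRet,CRetImplikujeGraph}, and bundled in \Cref{CharakteristikaReturn}) together with the two bridging lemmas \Cref{lem:crwAlps,lem:lpsAcrw} relating the ``crw'' and ``lps'' properties. Since $G$-richness is by definition Property $\Gg{G}{1}$, and almost $G$-richness is the existence of some $N$ with Property $\Gg{G}{N}$, it suffices to show that the ``lps'' formulation is sandwiched between these.

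For the first assertion, I would argue both implications. If $\uu$ is $G$-rich, then by \Cref{CharakteristikaReturn} it has Property $\Gc{G}{1}$, and then \Cref{lem:crwAlps}(1) immediately gives Property $\Gl{G}{1}$, which is exactly the stated condition that every factor has a $G$-unioccurrent $G$-longest palindromic suffix or a $G$-unioccurrent last letter. Conversely, if $\uu$ satisfies Property $\Gl{G}{1}$, then \Cref{lem:lpsAcrw}(1) yields Property $\Gc{G}{1}$, and \Cref{CharakteristikaReturn} (with $N=1$) concludes that $\uu$ is $G$-rich. Thus the first equivalence is a short chain of already-proved implications.

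For the second assertion, I would use the uniform-recurrence hypothesis exactly where the asymmetry between the two notions lives. If $\uu$ is almost $G$-rich, then for some $N$ it has Property $\Gg{G}{N}$, hence Property $\Gc{G}{N}$ by \Cref{CharakteristikaReturn}; invoking \Cref{lem:crwAlps}(2), which requires uniform recurrence, produces an integer $M$ with Property $\Gl{G}{M}$, giving the desired direction. Conversely, if $\uu$ has Property $\Gl{G}{N}$ for some $N$, then \Cref{lem:lpsAcrw}(2) (or part (1) if $N=1$) gives Property $\Gc{G}{M}$ for $M = \max\{1,N-1\}$, whence Property $\Gg{G}{M}$ by \Cref{CharakteristikaReturn}, so $\uu$ is almost $G$-rich by definition. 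I expect no genuine obstacle here, since all the combinatorial work has been isolated into the preceding lemmas; the only point requiring care is bookkeeping the parameter shifts ($N \mapsto M$ in \Cref{lem:crwAlps}(2) and $N \mapsto N-1$ in \Cref{lem:lpsAcrw}(2)) and recalling that the backward direction does not need uniform recurrence while the forward direction does.

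\begin{proof}
By \Cref{CharakteristikaReturn}, for every integer $N$ the word $\uu$ satisfies Property $\Gg{G}{N}$ if and only if it satisfies Property $\Gc{G}{N}$.

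We first prove the first claim. If $\uu$ is $G$-rich, then by \Cref{CharakteristikaReturn} it has Property $\Gc{G}{1}$, and \Cref{lem:crwAlps}(1) implies Property $\Gl{G}{1}$, which is precisely the stated condition. Conversely, if $\uu$ satisfies Property $\Gl{G}{1}$, then \Cref{lem:lpsAcrw}(1) gives Property $\Gc{G}{1}$, and thus $\uu$ is $G$-rich by \Cref{CharakteristikaReturn}.

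We now prove the second claim, assuming $\uu$ is uniformly recurrent. Suppose $\uu$ is almost $G$-rich. Then there exists $N$ such that $\uu$ has Property $\Gg{G}{N}$, hence Property $\Gc{G}{N}$. Since $\uu$ is uniformly recurrent, \Cref{lem:crwAlps}(2) provides an integer $M$ such that $\uu$ has Property $\Gl{G}{M}$, which is the asserted condition. Conversely, suppose $\uu$ has Property $\Gl{G}{N}$ for some integer $N$. If $N = 1$, then by \Cref{lem:lpsAcrw}(1) $\uu$ has Property $\Gc{G}{1}$; if $N > 1$, then by \Cref{lem:lpsAcrw}(2) $\uu$ has Property $\Gc{G}{N-1}$. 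In either case $\uu$ has Property $\Gc{G}{M}$ for some integer $M$, hence Property $\Gg{G}{M}$ by \Cref{CharakteristikaReturn}, and therefore $\uu$ is almost $G$-rich by definition.
\end{proof}
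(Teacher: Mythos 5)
Your proposal is correct and follows essentially the same route as the paper's own proof: both reduce the statement to the equivalence of Properties $\Gc{G}{N}$ and $\Gg{G}{N}$ via \Cref{CharakteristikaReturn} and then bridge to the ``lps'' formulation using \Cref{lem:crwAlps,lem:lpsAcrw}, with uniform recurrence invoked only in the forward direction of the second claim. Your version merely spells out the parameter bookkeeping ($N\mapsto M$, $N\mapsto N-1$) that the paper leaves implicit.
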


\begin{proof}
Using \Cref{lem:crwAlps,lem:lpsAcrw}, we have that Property $\Gl{G}{1}$ is satisfied if and only if Property $\Gc{G}{1}$ is satisfied. The first claim then follows from the first claim of \Cref{CharakteristikaReturn}.

If $\uu$ is uniformly recurrent, then again using \Cref{lem:crwAlps,lem:lpsAcrw}, we find that
there exists an integer $N$ such that $\uu$ satisfies Property $\Gl{G}{N}$ if and only if there exists an integer $M$ such that $\uu$ satisfies Property $\Gc{G}{M}$.
The second claim follows from the second claim of \Cref{CharakteristikaReturn}.
\end{proof}


\section{G-defect}\label{sec:Defect}

In \Cref{sec:clasika} we recalled the definition of palindromic defect and its relation to classical richness.
Moreover,  since the defect  of a finite word $w$ depends only on its length and the number of palindromes contained in it,
the defect satisfies the following properties for any $a \in \A$ (see \cite{BrHaNiRe}):
\begin{equation*} \label{vnoreni}
D(w) \leq  D(wa)\leq D(w)+1, \   D(w) \leq D(aw)\leq D(w)+1, \ \hbox{and} \ D(w) = D(R(w)).
\end{equation*}

The question we address here is how to define a $G$-analogy
of defect when the group $G$ contains more than two elements.
Of course, we would like to find a definition of $G$-defect
such that $G$-richness and almost $G$-richness are again connected
with $G$-defect in an analogous way.

 Let us illustrate on the Thue-Morse word $\tt_{2,2}$ the number of distinct $G$-palindromes contained in its factors.
 The language of the Thue-Morse word is invariant under the  reversal mapping $R$ and under the antimorphism $E$ which permutes letters $0$ and $1$.
  In \cite{PeSta1}, we showed that the Thue-Morse word is
  $G$-rich  for $G= \{ \id, \Tr, E, \Tr E\}$. In \Cref{tab:TM_prefixy}, the numbers of
 \mbox{$G$-palindromic} factors of short prefixes of the Thue-Morse words
 are depicted. 
There is no simple relation between the number of palindromes, $E$-palindromes and the length of the prefix,
nevertheless, the $G$-longest palindromic suffix of each prefix is $G$-unioccurrent in it.
To generalize the notion of defect, the counting of $G$-palindromes must be replaced by counting the classes $[w]$ of $G$-palindromes.
Thus, we define the set, denoted $\PalG(w)$, of all $G$-palindromic classes of equivalence in a finite word $w$ as follows
$$
\PalG(w) := \left \{ [v] \mid v \text{ is a factor of } w \text{ and a } G \text{-palindrome}  \right \} .
$$

 \renewcommand{\tabcolsep}{0.1cm}
 \renewcommand{\arraystretch}{1.2}

 \begin{table}[t!]
 \begin{center}
 \begin{tabular}{c|c|c|c||c|c|c|c}
 $n$ & $\# \Pal_{\Tr} (p_n)$ & $\# \Pal_E(p_n)$ & $G$-${\rm lps}(p_n)$ &
 $n$ & $\# \Pal_{\Tr} (p_n)$ & $\# \Pal_E(p_n)$ & $G$-${\rm lps}(p_n)$ \\ \hline
 $0$ & $1$ & $1$ & $\varepsilon$ &
 $10$ & $9$ & $8$ & $100110$ \\ \hline
 $1$ & $2$ & $1$ & $0$ &
 $11$ & $10$ & $9$ & $001100$ \\ \hline
 $2$ & $3$ & $2$ & $01$ &
 $12$ & $11$ & $10$ & $10011001$ \\ \hline
 $3$ & $4$ & $2$ & $11$ &
 $13$ & $12$ & $10$ & $0100110010$ \\ \hline
 $4$ & $5$ & $3$ & $0110$ &
 $14$ & $13$ & $11$ & $101001100101$ \\ \hline
 $5$ & $6$ & $3$ & $101$ &
 $15$ & $14$ & $12$ & $11010011001011$ \\ \hline
 $6$ & $7$ & $4$ & $1010$ &
 $16$ & $15$ & $13$ & $0110100110010110$ \\ \hline
 $7$ & $8$ & $5$ & $110100$ &
 $17$ & $16$ & $13$ & $101101$ \\ \hline
 $8$ & $9$ & $6$ & $01101001$ &
 $18$ & $17$ & $13$ & $01011010$ \\ \hline
 $9$ & $9$ & $7$ & $0011$ &
 $19$ & $18$ & $13$ & $0010110100$ \\ 
 \end{tabular}
 \end{center}
 \caption{Count of palindromes and $E$-palindromes in the prefixes of the Thue-Morse word $\tt_{2,2}$. The prefix of $\tt_{2,2}$ of length $n$ is denoted $p_n$.}
 \label{tab:TM_prefixy}
 \end{table}

\begin{definition}\label{def:D_G_nove}
Let $w$ be a finite word.
The \emph{$G$-defect} of $w$ is defined as
$$
D_G(w) := |w| + 1 - \# \PalG(w) - \gamma_G(w),
$$
where
$$
\gamma_G(w) := \# \left \{ [a] \mid a \in \A, a \text{ occurs in } w \text{, and } a \neq \Theta(a)   \text{ and for every antimorphism }  \Theta \in G \right \}.
$$
\end{definition}

It follows from the definition that for all $w \in \A^*$ and $\mu \in G$ we have $D_G(w) = D_G(\mu(w))$.

The authors of  \cite{DrJuPi} also observed that the classical  richness of $w$
can be characterized by so-called Property \textit{Ju}:
{\it Any prefix of $w$ has unioccurrent longest palindromic
suffix.} The notion of the longest palindromic suffix helps to calculate the defect of a word.  For a word $w$ and a letter $a$, the following holds (see \cite{GlJuWiZa}):
$$D(wa) = \left\{\begin{array}{ll}
D(w)\,,& \hbox{if $wa$ has unioccurrent longest palindromic suffix}\\
D(w)+1\,,~~~& \hbox{otherwise. }\\
\end{array}\right.
$$
Therefore, the defect of a finite word $w=w_1w_2\cdots w_n$ equals
to the number of indices $i$ for which $w_1w_2\cdots w_{i-1}w_i$ does
not have a unioccurrent longest palindromic suffix. Such indices are called lacunas  in
\cite{BlBrGaLa} and defective positions in \cite{GlJuWiZa}.
Inspired by this, we adopt the following definition.

\begin{definition} \label{def:DefPos}
Let $w = w_1 \cdots w_n \in \A^*$. An integer $i$ such that $1 \leq i \leq n$ is
called \emph{$G$-lacuna} in $w$ if $w_i$ and $G\text{-}{\rm
lps}(w_1 \cdots w_i)$ are not $G$-unioccurrent in $w_1 \cdots
w_i$.
\end{definition}

The next lemma follows from comparing the last definition and the definition of \mbox{$G$-defect}.

\begin{lem} \label{lem:D_G_pozice}
Let $w \in \A^*$, then
$$
D_G(w) = \text{the number of }G\text{-lacunas in } w.
$$
\end{lem}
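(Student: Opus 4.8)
The statement asserts that $D_G(w)$ equals the number of $G$-lacunas in $w$. The natural approach is induction on the length of $w$, tracking how adding one letter affects both sides of the equation. The key idea is to show that appending a letter $a$ to a word $u$ either increases $\# \PalG(ua) + \gamma_G(ua)$ by $1$ relative to $\# \PalG(u) + \gamma_G(u)$ (so that $D_G$ stays constant and the position is \emph{not} a $G$-lacuna), or leaves that quantity unchanged (so that $D_G$ increases by $1$ and the position \emph{is} a $G$-lacuna). Since $|ua| = |u| + 1$, this dichotomy is exactly what is needed: the number of $G$-lacunas and $D_G$ march in lockstep, both starting at $0$ for the empty word.

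\textbf{Key steps.} First I would establish the base case: for $w = \varepsilon$ we have $\PalG(\varepsilon) = \{[\varepsilon]\}$, $\gamma_G(\varepsilon) = 0$, and $|w|+1 = 1$, giving $D_G(\varepsilon) = 0$, matching the empty count of $G$-lacunas. For the inductive step, write $w = w_1 \cdots w_n$, set $u = w_1 \cdots w_{n-1}$ and $a = w_n$, and analyze the last position $n$. The heart of the matter is the claim that position $n$ is \emph{not} a $G$-lacuna (i.e. both $w_n$ and $G\text{-}{\rm lps}(w)$ are $G$-unioccurrent in $w$) if and only if appending $a$ contributes exactly one new term to $\# \PalG(w) + \gamma_G(w)$. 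I would split this into the two relevant contributions: whether the last letter $a$ is $G$-unioccurrent (which governs whether $[a]$ newly enters $\gamma_G$, in the case $a \neq \Theta(a)$ for all antimorphisms $\Theta \in G$), and whether $G\text{-}{\rm lps}(w)$ is $G$-unioccurrent (which governs whether a new class of $G$-palindrome is added to $\PalG$). The careful bookkeeping is to verify that these two events cannot both add a term, and that a $G$-unioccurrent longest palindromic suffix of $w$ always represents a $G$-palindromic class not already counted in $\PalG(u)$, while a non-unioccurrent one contributes nothing new.

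\textbf{Main obstacle.} The subtle point, and where the modified definition of $G$-lacuna earns its two clauses, is the interaction between the $\PalG$ count and the $\gamma_G$ count when $G\text{-}{\rm lps}(w) = \varepsilon$. In the classical setting the longest palindromic suffix of a nonempty word is always nonempty, but here $G\text{-}{\rm lps}(w)$ can be empty precisely when no nonempty suffix of $w$ is a $G$-palindrome, which can happen only when $R \notin G$. I expect the delicate verification to be that, when $G\text{-}{\rm lps}(w) = \varepsilon$, the relevant bookkeeping is carried entirely by the last-letter clause via $\gamma_G$: the letter $a$ being $G$-unioccurrent must correspond exactly to $[a]$ being a fresh contribution, and the definition of $G$-lacuna using both $w_i$ \emph{and} $G\text{-}{\rm lps}(w_1 \cdots w_i)$ is engineered so that the induction closes in this degenerate case as well. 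I would confirm that $\# \PalG(w) - \# \PalG(u)$ is $1$ exactly when $G\text{-}{\rm lps}(w)$ is nonempty and $G$-unioccurrent, and $0$ otherwise, and that $\gamma_G(w) - \gamma_G(u)$ is $1$ exactly when $a$ is $G$-unioccurrent and fixed by no antimorphism of $G$, and then check that these cases assemble into the stated lacuna condition. This matching of the two increments against the two-clause definition of $G$-lacuna is the real content of the proof; the rest is routine length counting.
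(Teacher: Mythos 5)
Your proposal is correct and follows essentially the same route as the paper: induction on the length of $w$, with the incremental dichotomy (appending $w_i$ either raises $\# \PalG + \gamma_G$ by exactly one, so $D_G$ is unchanged and $i$ is not a $G$-lacuna, or leaves it fixed, so $D_G$ grows by one and $i$ is a $G$-lacuna) proved by the same case analysis on $G$-unioccurrence of the last letter and of $G$-${\rm lps}(w_1\cdots w_i)$. The subtleties you flag — that the two contributions cannot occur simultaneously, and that when the $G$-longest palindromic suffix is empty the accounting is carried by $\gamma_G$ — are exactly the three sub-cases the paper works through.
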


\begin{proof}
If $w$ is the empty word, than the claim holds.
Suppose $w = w_1\cdots w_i$ for some $i \geq 1$.
We will show the two following implications:
\begin{enumerate}
\item if $i$ is a $G$-lacuna, then $D_G(w) = D_G(w_1\cdots w_{i-1}) + 1$;
\item if $i$ is not a $G$-lacuna, then $D_G(w) = D_G(w_1\cdots w_{i-1})$.
\end{enumerate}

Denote $s = G\text{-}{\rm lps}(w)$.

Suppose $i$ is a $G$-lacuna, i.e., $w_i$ and $s$ are both not $G$-unioccurrent in $w$.
Since $w_i$ is not $G$-unioccurrent, we have $\gamma_G(w) = \gamma_G(w_1\cdots w_{i-1})$.
Since $s$ is not $G$-unioccurrent, we have $\# \Pal_G(w) = \# \Pal_G(w_1\cdots w_{i-1})$.
This shows the first implication.

Suppose $i$ is not a $G$-lacuna, i.e., $w_i$ is $G$-unioccurrent or $s$ is $G$-unioccurrent in $w$.
We distinguish the three following cases.
\begin{enumerate}[a)]
\item $w_i$ is $G$-unioccurrent and $s$ is not $G$-unioccurrent. \\ It follows that $s = \varepsilon$ and thus for every antimorphism $\Theta \in G$, we have that $\Theta(w_i) \neq w_i$, which implies that $\gamma_G(w) = \gamma_G(w_1\cdots w_{i-1}) + 1$ and $\# \Pal_G(w) = \# \Pal_G(w_1\cdots w_{i-1})$.
\item $w_i$ is not $G$-unioccurrent and $s$ is $G$-unioccurrent. \\ It this case, since $w_i$ is not $G$-unioccurrent, we have  $\gamma_G(w) = \gamma_G(w_1\cdots w_{i-1})$. $G$-unioccurrence of $s$ implies  $\Pal_G(w) = \Pal_G(w_1\cdots w_{i-1}) \cup \{s\}$, thus, $\# \Pal_G(w) = \# \Pal_G(w_1\cdots w_{i-1})+1$.
\item$w_i$ is $G$-unioccurrent and $s$ is $G$-unioccurrent. \\ $G$-unioccurrence of $w_i$ implies that $|s| < 2$. Since $i \geq 1$, $s \neq \varepsilon$. Thus $|s| = 1$ and we deduce that $\gamma_G(w) = \gamma_G(w_1\cdots w_{i-1})$ and $\# \Pal_G(w) = \# \Pal_G(w_1\cdots w_{i-1})+1$.
\end{enumerate}
In all three cases we conclude that $D_G(w) = D_G(w_1\cdots w_{i-1})$ which shows the second implication.
\end{proof}

Moreover, it can be easily shown that the following relations are preserved:
\begin{equation*} \label{D_G_vnoreni}
D_G(w) \leq  D_G(wa)\leq D_G(w)+1 \text{ and }   D_G(w) \leq D_G(aw)\leq D_G(w)+1
\end{equation*}
for all $w \in \A^*$ and $a \in \A$.
Therefore, we can define $G$-defect of an infinite word.

\begin{definition}
Let $\uu$ be an infinite word. The \emph{$G$-defect} of $\uu$, denoted
$D_G(\uu)$, is defined as
$$
D_G(\uu) := \sup_{w \in \Lu} \{ D_G(w) \}.
$$
\end{definition}

The immediate connection with Property $\Gl{G}{N}$ is summarized in the following lemma.

\begin{lem} \label{lem:D_G_a_G_lps}
Let $\uu$ be an infinite word with language closed under $G$.
\begin{enumerate}

\item  \label{lem:D_G_a_G_lps_1} $D_G(\uu)=0$ if and only if $\uu$ satisfies Property $\Gl{G}{1}$.

\item  \label{lem:D_G_a_G_lps_2}  If there exists an integer $N$ such that $\uu$ satisfies Property $\Gl{G}{N}$, then $D_G(\uu)$ is finite.

\item  \label{lem:D_G_a_G_lps_3} If $\uu$ is uniformly recurrent and  $D_G(\uu)$ is finite, then there exists an integer $N$ such that $\uu$ satisfies Property $\Gl{G}{N}$.
\end{enumerate}

\end{lem}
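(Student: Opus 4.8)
The plan is to prove the three items of \Cref{lem:D_G_a_G_lps} by translating the statement about $G$-defect into the statement about $G$-lacunas via \Cref{lem:D_G_pozice}, which asserts that $D_G(w)$ equals the number of $G$-lacunas in $w$. The crucial observation is that an index $i$ fails to be a $G$-lacuna in a factor $w = w_1 \cdots w_i$ precisely when either $w_i$ is $G$-unioccurrent in $w$ or $G\text{-}{\rm lps}(w_1 \cdots w_i)$ is $G$-unioccurrent in $w_1 \cdots w_i$; this is exactly the defining condition of Property $\Gl{G}{N}$ read at the position $i$. So the whole lemma is essentially a dictionary between the quantitative notion (counting lacunas) and the qualitative notion (Property $\Gl{G}{N}$).

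For item \ref{lem:D_G_a_G_lps_1}, I would argue that $D_G(\uu) = 0$ means $D_G(w) = 0$ for every $w \in \Lu$, which by \Cref{lem:D_G_pozice} means no factor contains a $G$-lacuna. A factor $w$ of length $n$ contains no $G$-lacuna if and only if for every $i$ with $1 \leq i \leq n$, the prefix $w_1 \cdots w_i$ has $G$-unioccurrent last letter or $G$-unioccurrent $G$-longest palindromic suffix. Since every factor $v \in \Lu$ occurs as a prefix of some longer factor (using closedness/recurrence, which holds as $G$ contains an antimorphism), the condition ``no factor has a $G$-lacuna at its last position'' is equivalent to saying that for every $v \in \Lu$, either the last letter of $v$ or $G\text{-}{\rm lps}(v)$ is $G$-unioccurrent in $v$, which is precisely Property $\Gl{G}{1}$. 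I would spell out both directions of this equivalence carefully, noting that a lacuna at an interior position $i < n$ of $w$ is simply a lacuna at the last position of the prefix $w_1 \cdots w_i \in \Lu$.

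For item \ref{lem:D_G_a_G_lps_2}, suppose $\uu$ satisfies Property $\Gl{G}{N}$. Then by the defining condition, every prefix $w_1 \cdots w_i$ of a factor with $i \geq N$ satisfies the non-lacuna condition, so all $G$-lacunas of any factor occur at positions $i < N$. Hence the number of $G$-lacunas in any $w \in \Lu$ is bounded by $N - 1$, and by \Cref{lem:D_G_pozice} we get $D_G(w) \leq N - 1$ for all $w$, whence $D_G(\uu) \leq N - 1 < \infty$. For item \ref{lem:D_G_a_G_lps_3}, the converse direction, I would use uniform recurrence: if $D_G(\uu) = D < \infty$, then every factor has at most $D$ lacunas, so the lacunas ``stabilize'' in the sense that no new lacuna appears beyond finitely many positions. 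The key step is to bound the positions of lacunas uniformly. Since $D_G(w) \leq D$ for all $w$, and appending letters can increase the defect by at most one, there is a threshold beyond which no factor acquires a new lacuna; by uniform recurrence every long enough factor already exhibits all the lacuna-producing behaviour that can ever occur, so one can choose $N$ large enough that every prefix of length at least $N$ satisfies the non-lacuna condition, giving Property $\Gl{G}{N}$.

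The main obstacle I expect is item \ref{lem:D_G_a_G_lps_3}: extracting a single uniform threshold $N$ from the mere finiteness of $D_G(\uu)$. Finiteness bounds the \emph{total} number of lacunas in each factor but does not a priori prevent lacunas from appearing at arbitrarily late positions in longer and longer factors. The role of uniform recurrence is exactly to control this: it guarantees that every sufficiently long factor contains all short factors (with multiplicity), so the ``defect-generating'' configurations are captured early, and a factor cannot postpone its lacunas indefinitely. I would make this precise by arguing that if lacunas occurred at unboundedly large positions, one could, using recurrence, construct a factor with more than $D$ lacunas, contradicting $D_G(\uu) = D$. This is the step requiring genuine care, whereas items \ref{lem:D_G_a_G_lps_1} and \ref{lem:D_G_a_G_lps_2} are direct consequences of \Cref{lem:D_G_pozice} and the definitions.
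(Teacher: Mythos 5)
Your proposal is correct. For items \ref{lem:D_G_a_G_lps_1} and \ref{lem:D_G_a_G_lps_2} your argument is exactly the paper's (which simply says both claims follow from \Cref{lem:D_G_pozice}): a $G$-lacuna at position $i$ of $w$ is a lacuna at the last position of the prefix $w_1\cdots w_i \in \Lu$, so $D_G(\uu)=0$ means no factor ends in a lacuna (Property $\Gl{G}{1}$), and Property $\Gl{G}{N}$ confines all lacunas to positions below $N$, giving $D_G(\uu)\le N-1$. For item \ref{lem:D_G_a_G_lps_3} you take a genuinely different, contrapositive route. The paper argues directly: since $D_G$ is integer-valued, non-decreasing along prefixes and bounded, some prefix $v$ attains $D_G(\uu)$; by uniform recurrence there is $N$ such that every factor $w$ with $|w|\ge N-1$ contains $v$, and then monotonicity plus maximality of $D_G(v)$ force $D_G(v)=D_G(w)=D_G(wa)$ for every extension $wa\in\Lu$, so the last position of $wa$ is never a lacuna. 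You instead suppose factors ending in a lacuna occur at unbounded lengths and stack lacunas: having built $y_k$ with $D_G(y_k)\ge k$, choose a factor $y_{k+1}=y'a$ ending in a lacuna with $|y'|$ at least the uniform recurrence bound for $|y_k|$; then $y'$ contains $y_k$, so by monotonicity $D_G(y_{k+1})=D_G(y')+1\ge k+1$, and after $D_G(\uu)+1$ steps you contradict finiteness. Both proofs rest on the same two ingredients, monotonicity of $D_G$ under factor containment and uniform recurrence; the paper's version buys a one-shot argument with a single maximal-defect witness (at the price of observing that the supremum is attained by a prefix), while yours trades that for an induction but never needs the supremum to be attained. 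One caution: your interim sentence that ``there is a threshold beyond which no factor acquires a new lacuna'' is, as literally stated, the very conclusion to be proved and does not follow merely from boundedness of the defect; it is the stacking construction you then outline that closes this gap, so the written proof should run through that construction rather than through this sentence.
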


\begin{proof}
The first two claims follow from \Cref{lem:D_G_pozice}.

To show the last claim, suppose $D_G(\uu) $ to be finite. Then there exists a prefix $v=u_0u_1\cdots u_{M-1}$  of $\uu$ such that $D_G(\uu)  = D_G(v)$ and any letter of the alphabet occurs in $v$. As $\uu$ is uniformly recurrent, there exists a constant $N$ such that any factor $w$  of $\uu$ of length at least $N-1$ contains the prefix  $v$ as its factor. Using the definition and basic properties of $G$-defect, and maximality of $D_G(v)$, we obtain
  $ D_G(v)=D_G(w) =D_G(wa) $ for any $a \in \A$  such that $wa\in \Lu$. Therefore,  the last position in $wa$ is not a $G$-lacuna,  i.e.,  $\uu$  has Property $\Gl{G}{N}$.
\end{proof}

It remains to connect $G$-defect with $G$-richness and almost $G$-richness.

\begin{thm} \label{CharakteristikaDG}
Let $\uu$ be an infinite word with language closed under $G$.
\begin{enumerate}

\item   $D_G(\uu)=0$ if and only if $\uu$ is $G$-rich.

\item If $\uu$ is uniformly recurrent, then  $D_G(\uu)$ is finite  if and only if $\uu$ is almost $G$-rich.
\end{enumerate}

\end{thm}

\begin{proof}
The first part follows from \Cref{lem:D_G_a_G_lps} and \Cref{CharakteristikaLongestSuffix}.

To show the second part, one can see that it follows from \Cref{lem:D_G_a_G_lps} that $\uu$  satisfies Property
$\Gl{G}{N}$ for some $N$ if and only if $D_G(\uu)$ is finite.
We can then use \Cref{lem:crwAlps,lem:lpsAcrw} to get equivalence with having Property $\Gc{G}{N'}$ for some $N'$.
Finally, we use \Cref{GraphImplikujeCRet,CRetImplikujeGraph} to prove the equivalence with having Property $\Gg{G}{N'}$ which is by definition equivalent with almost $G$-richness of $\uu$.
\end{proof}

\section{$G$-richness and bilateral order}

As stated in \Cref{equiv_rich}, characterization
\ref{equiv_rich_bilateral}, words rich in classical sense can be
characterized using bilateral order of bispecial factors. 
In this section we show a generalization of this statement.

The proof of this fact for classical richness  given in \cite{BaPeSta2}  is based on the validity of point \ref{equiv_rich_nasa} of  \Cref{equiv_rich}. 
The following statement is a combination of Theorem 22 and Remark 24 from \cite{PeSta1} and
it generalizes characterization \ref{equiv_rich_nasa} of \Cref{equiv_rich} for almost rich words.

\begin{prop}\label{almostG-Rich}
Let $\uu$ be an infinite word with language closed under $G$ and $N \in \N$ be a $G$-distinguishing  number on $\uu$.
The word $\uu$ satisfies Property $G$-tls$(N)$  if and only if
\begin{equation}\label{almostG-RichVztah}
\Delta \C (n) + \# G \ \ =  \sum_{\Theta \in
G^{(2)}}\Bigl(\P_{\Theta}(n) + \P_{\Theta}(n+1)\Bigr) \qquad
\hbox{for any} \ n \geq N.\end{equation}
\end{prop}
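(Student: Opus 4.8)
The statement asserts that Property $G$-tls$(N)$ is equivalent to equality holding in \eqref{mistoStromovi} for all $n\ge N$, where $N$ is $G$-distinguishing. Since the proposition is explicitly a repackaging of results from \cite{PeSta1} (``a combination of Theorem 22 and Remark 24''), my plan is to trace the equality case in the proof of \Cref{nerovnostProVice} and show it is governed precisely by the tree-like structure of $\Gamma_n(\uu)$. The key is the remark made just after \Cref{nerovnostProVice}: ``the equality in \eqref{mistoStromovi} is reached if and only if the undirected graph of symmetries $\Gamma_n(\uu)$ has a specific tree-like structure.'' So the whole task reduces to identifying that the ``specific tree-like structure'' in the equality analysis of \cite{PeSta1} is exactly what Property $G$-tls$(N)$ encodes, namely that loops are $G$-palindromes and that deleting loops yields a tree.

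\textbf{Key steps, in order.}
First I would recall the counting identities that feed the inequality: the edge count of $\overrightarrow{\Gamma}_n(\uu)$ contributes to $\Delta\C(n)$ via the special-factor formula for $\Delta\C$, while the vertex count and the classification of loops as $\Theta$-palindromic or not governs the palindromic sums $\sum_{\Theta\in G^{(2)}}(\P_\Theta(n)+\P_\Theta(n+1))$. Since $N$ is $G$-distinguishing, for $n\ge N$ each factor $w\in\L_n(\uu)$ has its antimorphic images $\Theta(w)$ pairwise distinct, so each vertex class $[w]$ contributes in a controlled way and a $G$-palindromic loop corresponds to a unique involutive antimorphism $\Theta\in G^{(2)}$ fixing it. Next I would set up the Euler-type relation for the connected undirected graph $\Gamma_n(\uu)$: since the word is recurrent (because $G$ contains an antimorphism), $\Gamma_n(\uu)$ is connected, so the number of non-loop edges is at least (number of vertices) $-1$, with equality exactly when the loop-free graph is a tree. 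The slack $\#G - \bigl(\text{something}\bigr)$ in \eqref{mistoStromovi} is precisely the excess of non-loop edges over a spanning tree plus the count of loops that fail to be $G$-palindromes. Equality for a given $n$ therefore holds if and only if (i) every loop is a $G$-palindrome and (ii) the loop-free graph is a tree — which are exactly the two bulleted conditions in Property $G$-tls$(N)$, required here for every $n\ge N$.

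\textbf{The main obstacle.}
The delicate point is bookkeeping with the $G$-distinguishing hypothesis so that the correspondence between graph-theoretic quantities and the palindromic complexities $\P_\Theta$ is exact rather than merely an inequality. Specifically, I must ensure that for $n\ge N$ a loop edge $[e]$ that is a $\Theta$-palindrome for a \emph{non-involutive} antimorphism $\Theta$ does not spuriously contribute to $\sum_{\Theta\in G^{(2)}}$; the $G$-distinguishing property is exactly what forces a palindromic loop of length $\ge N$ to be fixed by an \emph{involutive} antimorphism (as noted in the remark following \Cref{def:Gdistinguishing}, since a fixed word containing enough structure forces involutivity), so the sum over $G^{(2)}$ captures each palindromic loop once and only once. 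Because this equivalence must hold simultaneously for \emph{all} $n\ge N$, I would finish by invoking that $G$-distinguishing persists upward ($m>n\ge N$ is again $G$-distinguishing), so a single threshold $N$ suffices, and conclude that the conjunction of equalities in \eqref{almostG-RichVztah} for all $n\ge N$ is equivalent to the tree-like condition holding at every order $n\ge N$, i.e.\ to Property $G$-tls$(N)$. The honest shortcut, given the citation, is to state that both directions follow by reading the equality case in the proof of \Cref{nerovnostProVice} in \cite{PeSta1} and matching it term-by-term with the definition of Property $G$-tls$(N)$.
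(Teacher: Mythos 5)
Your proposal is correct and follows essentially the same route as the paper: the paper gives no independent proof of this proposition, presenting it as a direct combination of Theorem 22 and Remark 24 of \cite{PeSta1}, i.e., exactly the per-$n$ statement that equality in \eqref{mistoStromovi} holds if and only if $\Gamma_n(\uu)$ has the tree-like structure, quantified over all $n \geq N$. Your reconstruction of that equality analysis --- including the observation that for $n\geq N$ the $G$-distinguishing hypothesis forces any $\Theta$-palindromic factor to be fixed by a unique, necessarily involutive, antimorphism, so that the sum over $G^{(2)}$ counts palindromic loops correctly --- is precisely the term-by-term matching with Property $G$-tls$(N)$ that the paper intends.
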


We have no modification of the previous proposition describing Property $\Gg{G}{N}$ for $N$  which are not $G$-distinguishing;
in this case we have no simple expression for the right side of the equation since it strongly depends on the relations among the elements of the group.
In \cite{PeSta1}, we show the exact expression only for groups of order $4$.
 Therefore, in this section we concentrate on the notion almost
 $G$-richness.  Let us rephrase the previous proposition in a more handy
way.
\begin{corollary}\label{prevod} Let $\uu$ be an infinite word with language
closed under $G$ and $N \in \N$ be a $G$-distinguishing number on $\uu$.
The word  $\uu$ satisfies Property
 $G$-tls$(N)$  if and only if
\begin{enumerate}
\item  $$\Delta \C (N) +
\# G \ \ =  \sum_{\Theta \in G^{(2)}}\Bigl(\P_{\Theta}(N) +
\P_{\Theta}(N+1)\Bigr)$$

\item  and  for all $n \geq N$,  we have $$\Delta^2 \C(n)=
\sum_{\Theta \in G^{(2)} \quad} \sum\limits_{\substack{w \in
\L_n(\uu)\\ w= \Theta(w)}}\left(\#\Pext_{\Theta}(w)-1\right).$$

\end{enumerate}
\end{corollary}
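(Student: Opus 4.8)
The plan is to show that Corollary~\ref{prevod} is simply a repackaging of Proposition~\ref{almostG-Rich}, obtained by replacing the single equation~\eqref{almostG-RichVztah} valid for all $n \geq N$ by an equivalent pair: one instance at the base point $n = N$, together with a recurrence governing how the two sides evolve as $n$ increases. The key observation is that the left-hand side of \eqref{almostG-RichVztah} is $\Delta \C(n) + \#G$, whose $n$-dependence is driven entirely by $\Delta \C(n)$, and the difference $\Delta\C(n+1) - \Delta\C(n) = \Delta^2\C(n)$ is already expressed via the bilateral order through Cayley's formula recalled in the preliminaries. So the whole statement should reduce to an induction on $n$, with the base case being item~1 and the inductive step being item~2.

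First I would observe that, since $\#G$ is a constant, the family of equalities \eqref{almostG-RichVztah} ``for all $n \geq N$'' holds if and only if it holds at $n = N$ and, in addition, the two sides have the same first difference for every $n \geq N$. Concretely, writing $L(n) := \Delta\C(n) + \#G$ for the left side and $P(n) := \sum_{\Theta \in G^{(2)}}\bigl(\P_\Theta(n) + \P_\Theta(n+1)\bigr)$ for the right side, the assertion $L(n) = P(n)$ for all $n \geq N$ is equivalent to the conjunction of $L(N) = P(N)$ and $L(n+1) - L(n) = P(n+1) - P(n)$ for all $n \geq N$. This is a completely elementary telescoping argument; the content of the corollary is then entirely in identifying the increment $L(n+1)-L(n) = \Delta^2\C(n)$ on the left and rewriting $P(n+1)-P(n)$ on the right.

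The crux is therefore to compute $P(n+1) - P(n)$ and match it with the claimed formula $\sum_{\Theta \in G^{(2)}} \sum_{w \in \L_n(\uu),\, w=\Theta(w)} \bigl(\#\Pext_\Theta(w) - 1\bigr)$. Here I would invoke formula~\eqref{DeltaP}, which gives $\P_\Theta(n+2) = \sum_{w \in \L_n(\uu),\, w = \Theta(w)} \#\Pext_\Theta(w)$ for each fixed antimorphism $\Theta$. Summing this over $\Theta \in G^{(2)}$ and forming the appropriate telescoping combination, one gets
\begin{align*}
P(n+1) - P(n) &= \sum_{\Theta \in G^{(2)}} \Bigl(\P_\Theta(n+2) - \P_\Theta(n)\Bigr) \\
&= \sum_{\Theta \in G^{(2)}} \ \sum_{\substack{w \in \L_n(\uu)\\ w = \Theta(w)}} \#\Pext_\Theta(w) \ - \ \sum_{\Theta \in G^{(2)}} \P_\Theta(n).
\end{align*}
The remaining task is to recognize the subtracted term $\sum_{\Theta \in G^{(2)}} \P_\Theta(n) = \sum_{\Theta \in G^{(2)}} \#\{w \in \L_n(\uu) : w = \Theta(w)\}$ as exactly the sum of the constant $1$ over all pairs $(\Theta, w)$ appearing in the first sum, which converts the expression into $\sum_{\Theta \in G^{(2)}} \sum_{w = \Theta(w)} (\#\Pext_\Theta(w) - 1)$, as desired.

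\textbf{The main obstacle I anticipate} lies in this last matching, and it is where the hypothesis that $N$ is $G$-distinguishing becomes essential. When $n$ is not $G$-distinguishing, a single factor $w$ of length $n$ could satisfy $w = \Theta_1(w) = \Theta_2(w)$ for two distinct involutive antimorphisms, so the double sum would overcount palindromes and the clean bookkeeping above would fail. For $n \geq N$ with $N$ $G$-distinguishing, Definition~\ref{def:Gdistinguishing} guarantees that each palindromic factor $w$ is fixed by at most one antimorphism of $G$, so the pairs $(\Theta, w)$ with $w = \Theta(w)$ are in bijection with the $\Theta$-palindromic factors counted without repetition, and the term-by-term cancellation is legitimate. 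I would make this disjointness explicit before carrying out the sum manipulation. Once these increment computations are in place, the equivalence of Proposition~\ref{almostG-Rich} with the pair (item~1, item~2) follows immediately from the telescoping principle, completing the proof.
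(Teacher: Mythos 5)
Your proposal is correct and takes essentially the same approach as the paper: the paper's proof likewise telescopes \eqref{almostG-RichVztah} into the single instance $n=N$ plus equality of first differences for all $n\geq N$, and computes the right-hand increment as $\sum_{\Theta \in G^{(2)}}\bigl(\P_{\Theta}(n+2)-\P_{\Theta}(n)\bigr)$ using \eqref{DeltaP} together with the identity $\P_{\Theta}(n)=\sum_{w\in\L_n(\uu),\, w=\Theta(w)} 1$. One remark: the obstacle you anticipate is illusory, since the identity $\P_{\Theta}(n+2)-\P_{\Theta}(n)=\sum_{w\in\L_n(\uu),\, w=\Theta(w)}\bigl(\#\Pext_{\Theta}(w)-1\bigr)$ holds for each fixed $\Theta$ separately (a factor fixed by two antimorphisms is counted consistently on both sides), so no disjointness across antimorphisms is needed and the $G$-distinguishing hypothesis enters only because \Cref{almostG-Rich} requires it, not through the sum manipulation.
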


\begin{proof} The task to verify equalities  $a(n) =
b(n)$ for all $n\geq N$ means  to verify $a(N)=b(N)$ and $\Delta
a(n)= \Delta b(n)$ for all $n\geq N$. Let us consider $a(n)$ to be
equal to the left side  and $b(n)$ to the right side of
\eqref{almostG-RichVztah}. It is now enough to realize that $$\Delta
b(n)= \sum_{\Theta \in G^{(2)}} \P_{\Theta}(n+2)-\P_{\Theta}(n) =
\sum_{\Theta \in G^{(2)} \quad} \sum\limits_{\substack{w \in
\L_n(\uu)\\ w= \Theta(w)}}\left(\#\Pext_{\Theta}(w)-1\right),$$
where we used equalities \eqref{DeltaP} and $\P_{\Theta}(n) = \sum\limits_{\substack{w \in
\L_n(\uu)\\ w= \Theta(w)}} 1$.
\end{proof}

\begin{prop} \label{G_rich_Pext_nerovnosti}
Let $N \in \N$, $\uu \in \A^\N$ satisfy  $\Gg{G}{N}$, and $w$ be a bispecial factor
of $\uu$ of length at least $ N$.

\begin{itemize}
    \item If $w$ is not a $G$-palindrome, then
$$\b(w) \geq 0.$$
\item If $w$ is a $\Theta$-palindrome for an antimorphism $\Theta
\in G$, then
$$\b(w)\geq \# \Pext_{\Theta}(w) - 1.$$
\end{itemize}
\end{prop}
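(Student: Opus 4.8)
The plan is to translate $\b(w)$ into the language of the bipartite \emph{extension graph} $\mathcal{E}(w)$ of the factor $w$: its vertex set is the disjoint union of $\Lext(w)$ (left vertices) and $\Rext(w)$ (right vertices), and $a \in \Lext(w)$ is joined to $b \in \Rext(w)$ by an edge whenever $awb \in \Lu$. Since $(a,b)$ determines $awb$, the number of edges equals $\#\Bext(w)$, so that
$$\b(w) = \#\Bext(w) - \#\Lext(w) - \#\Rext(w) + 1 = \#(\text{edges}) - \#(\text{vertices}) + 1.$$
Writing $\beta_1$ for the first Betti number (number of independent cycles) and $c$ for the number of connected components of $\mathcal{E}(w)$, the identity $\#(\text{edges}) - \#(\text{vertices}) + c = \beta_1$ yields $\b(w) = \beta_1(\mathcal{E}(w)) - \bigl(c(\mathcal{E}(w)) - 1\bigr)$. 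Hence both desired inequalities will follow once I prove that $\mathcal{E}(w)$ is \emph{connected}; this reduction is the backbone of the argument.

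Assuming connectivity, the two cases are short. If $w$ is not a $G$-palindrome, then $c = 1$ gives $\b(w) = \beta_1(\mathcal{E}(w)) \geq 0$, the first claim. If $w = \Theta(w)$ for an antimorphism $\Theta \in G$, then closedness of $\Lu$ under $\Theta$ together with $\Theta(w) = w$ shows that $a \mapsto \Theta(a)$ is a bijection $\Lext(w) \to \Rext(w)$ and that $awb \in \Lu \iff \Theta(b)w\Theta(a) \in \Lu$; thus $\mathcal{E}(w)$ carries a fixed-point-free involution $\iota$ swapping its two sides, whose fixed edges are exactly the diagonal edges $aw\Theta(a)$ with $a \in \Pext_{\Theta}(w)$. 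Put $d = \#\Pext_{\Theta}(w)$ and $p = \#\Lext(w) = \#\Rext(w)$, and pass to the quotient $\mathcal{E}(w)/\iota$: it has $p$ vertices, $d$ loops (from the fixed edges) and $\tfrac{1}{2}(\#\Bext(w)-d)$ ordinary edges. If $\mathcal{E}(w)$ is connected, so is this quotient and so is the quotient with its loops deleted; being connected on $p$ vertices it has at least $p-1$ edges, giving $\tfrac{1}{2}(\#\Bext(w)-d) \geq p-1$, i.e. $\b(w) = \#\Bext(w) - 2p + 1 \geq d - 1 = \#\Pext_{\Theta}(w) - 1$, the second claim.

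It therefore remains to show that $\mathcal{E}(w)$ is connected for every bispecial $w$ with $|w| \geq N$, and this is the step I expect to be the main obstacle. This is precisely where Property $\Gg{G}{N}$ must be used: a disconnected $\mathcal{E}(w)$ can force a negative bilateral order (two disjoint diagonal edges would give $\b(w) = -1$ while $d-1 = 1$), so connectivity is a genuine consequence of the tree-like structure, not a formal fact. I plan to argue by contradiction, assuming a splitting $\Lext(w) = L_1 \sqcup L_2$, $\Rext(w) = R_1 \sqcup R_2$ with no factor $awb \in \Lu$ mixing the blocks. Since $\b(w) = \b(\mu(w))$ for $\mu \in G$ and the relevant data are $G$-invariant along the class $[w]$, I may reason at the level of $[w]$ and the graph of symmetries $\Gamma_{|w|}(\uu)$. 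Using \Cref{CharakteristikaReturn} (every complete $G$-return word of $[w]$ is a $G$-palindrome) together with the structural \Cref{lem:rich_na_crw}, each complete $G$-return word links a right extension of a representative of $[w]$ to a left extension through some antimorphism; a splitting of $\mathcal{E}(w)$ would make these return words fall into two families that never interact, forcing either a complete $G$-return word that is not a $G$-palindrome or two distinct paths in the loop-removed graph $\Gamma_{|w|}(\uu)$, contradicting respectively Property $\Gc{G}{N}$ or the tree property in Property $\Gg{G}{N}$. Turning this alternation-and-uniqueness heuristic into a rigorous contradiction, while carefully bookkeeping the passage between the single factor $w$ and its $G$-class, is the delicate part of the proof.
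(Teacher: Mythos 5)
Your reduction is exactly the one the paper uses: the bipartite graph you call $\mathcal{E}(w)$ is the paper's $B(w)$, the identity $\b(w) = \#(\text{edges}) - \#(\text{vertices}) + 1$ and the quotient by the involution $awb \mapsto \Theta(b)w\Theta(a)$ (the paper's $B'(w)$, with loops at the $\Theta$-palindromic extensions) are both there, and your counting in both cases is carried out correctly. So everything hinges, as you yourself say, on connectivity of $\mathcal{E}(w)$ --- and that is precisely the part you have not proved. What you offer instead is a plan by contradiction whose key claim (that a splitting of $\mathcal{E}(w)$ would force either a non-$G$-palindromic complete $G$-return word or two distinct paths in the loop-removed $\Gamma_{|w|}(\uu)$) is not substantiated, and it is not clear it can be substantiated in this form: the graph of symmetries collapses the whole class $[w]$ to a single vertex, and its edges are factors of $\uu$, not extension pairs $(a,b)$, so two components of $\mathcal{E}(w)$ do not in any obvious way yield two vertex-distinct paths in $\Gamma_{|w|}(\uu)$ or a non-palindromic return word. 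The difficulty is that consecutive $G$-occurrences of $w$ in $\uu$ are occurrences of various images $\nu(w)$, $\nu \in G$, whose neighbouring letters live ``in the coordinates of $\nu$''; relating them to edges of the fixed graph $\mathcal{E}(w)$ is the whole content of the proof, not a routine bookkeeping step.

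The paper resolves this by a direct (not contradiction) argument that is missing from your proposal. Enumerate the $G$-occurrences $k_0 < k_1 < k_2 < \cdots$ of $w$ in $\uu$ and write the factor at position $k_n$, together with its two neighbouring letters, as $\nu_n(c_n)\,\nu_n(w)\,\nu_n(d_n)$ with $\nu_n \in G$ and $c_n, d_n \in \A$. Property $\Gg{G}{N}$ gives, via \Cref{rem:alternuji}, that the $\nu_n$ alternate between morphisms and antimorphisms, so each occurrence pulls back to a genuine edge $c_n w d_n$ (or $d_n w c_n$) of $\mathcal{E}(w)$. The crucial step, obtained by applying \Cref{lem:rich_na_crw} to the complete $G$-return word between the occurrences at $k_n$ and $k_{n+1}$, is that $\nu_{n+1} = \Theta \nu_n$ for some antimorphism $\Theta \in G$, whence $c_{n+1} = d_n$: consecutive edges share a vertex, so the sequence of occurrences traces a walk in $\mathcal{E}(w)$. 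Recurrence of $\uu$ (a consequence of closedness under a group containing an antimorphism) makes every vertex $aw$ and $wb$ appear infinitely often along this walk, so $\mathcal{E}(w)$ is connected. Without this occurrence-to-walk mechanism --- which is exactly where $\Gg{G}{N}$, \Cref{lem:rich_na_crw} and recurrence enter --- your argument is incomplete at its load-bearing step.
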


\begin{proof}
Let $w$ be a bispecial factor having its length $M:=|w| \geq  N$
such that for all antimorphism $\Theta \in G$, $\Theta(w) \neq w$.
By the definition of $\b(w)$, we want to prove
\begin{equation}\label{zasestrom} \# \Bext(w)
\geq \# \Rext(w) + \# \Lext(w) - 1.
\end{equation}

Let $B(w)$ be a bipartite graph with the set of vertices
$$
V(w) = \left \{ aw \mid a \in \Lext(w) \right \} \cup \left \{ wb
\mid b \in \Rext(w) \right \}.
$$
There is an edge connecting  vertices $aw$ and $wb$ if the word
$awb$ is a factor of $\uu$. The number of vertices in the graph
$B(w)$ is $\# \Rext(w) + \# \Lext(w)$ and the number of edges is
$\# \Bext(w)$. In the sequel,  we show that this graph is connected. Since
in any connected graph the number of edges equals at least the
number of vertices minus one, the inequality \eqref{zasestrom}
follows.

Let $(k_n)$ be an increasing sequence of indices such that $k_0 >
0$ is an occurrence of $w$ and $k_n$ is an $G$-occurrence of $w$
for any $n\geq 1$. Moreover, any $G$-occurrence of $w$ in the
suffix $u_{k_0}u_{k_0+1}u_{k_0+2}\ldots $  of $\uu$ belongs to the
sequence $(k_n)$. As $\uu$ satisfies Property $\Gg{G}{N}$, then according
to \Cref{rem:alternuji} we have

$u_{k_1}u_{k_1+1}\cdots u_{k_1+M-1} = \nu_1(w)$, where  $\nu_1\in
G$ is an antimorphism,

$u_{k_2}u_{k_2+1}\cdots u_{k_2+M-1} = \nu_2(w)$, where  $\nu_2\in
G$ is a morphism,

$u_{k_3}u_{k_3+1}\cdots u_{k_3+M-1} = \nu_3(w)$, where  $\nu_3\in
G$ is an antimorphism,

and so on.\\
The restriction of  $\nu \in G$ to the set of letters is just a
permutation. Therefore, for any $\nu\in G$ and any $b\in
\mathcal{A}$, there exists a letter $a\in \mathcal{A}$ such that
$b=\nu(a)$. Thus,  for any $n\in \mathbb{N}$, the factor
$u_{k_n-1}u_{k_n}u_{k_n+1}\cdots u_{k_n+M}$ can be written as
$$u_{k_n-1}u_{k_n}u_{k_n+1}\cdots u_{k_n+M}=\nu_n(c_n)\nu_n(w)\nu_n(d_n)\quad \hbox{for some letters
$c_n$ and $d_n$.}$$
 As $\nu_{2i}$ is  a morphism and $\nu_{2i+1}$
is an antimorphism, we have
\begin{equation}\label{bilat}
c_{2i}wd_{2i} \in \L(\uu) \quad \hbox{ and}\quad d_{2i-1}wc_{2i-1}
\in \L(\uu).
\end{equation}
Because $w$ is not a $G$-palindrome, any $G$-occurrence of $w$
together with the
 left and the right neighboring letters corresponds  to a unique edge in the graph $B(w)$.
For any $n\in \mathbb{N}$, the factor
$$\nu_n(w)\nu_n(d_n) \cdots \nu_{n+1}(c_{n+1})\nu_{n+1}(w)$$ is a
complete $G$-return word of $[w]$. According to
\Cref{lem:rich_na_crw}, there exists an antimorphism $\Theta\in G$
such that $\nu_{n+1}(w) = \Theta \bigl(\nu_{n}(w)\bigr)$ and
$\nu_{n+1}(c_{n+1})=\Theta\bigl(\nu_n(d_{n})\bigr)$.    As
$\nu_{n+1}(w) = \Theta \bigl(\nu_{n}(w)\bigr)$ implies $\nu_{n+1}
= \Theta \nu_{n}$, we get $\nu_{n+1}(c_{n+1})=
\Theta\bigl(\nu_n(c_{n+1})\bigr) = \Theta\bigl(\nu_n(d_{n})\bigr)$
and thus $c_{n+1}=d_n$. Using \eqref{bilat} we obtain
\begin{equation*}\label{bilat2}
c_{2i}wc_{2i+1} \in \L(\uu) \quad \hbox{ and}\quad c_{2i}wc_{2i-1}
\in \L(\uu).
\end{equation*}
Recurrence of $\uu$ implies
$$ V(w) = \{ c_{2i}w \mid i\in \mathbb{N}\} \cup \{ wc_{2i-1} \mid i\in
\mathbb{N}\}.$$ Walking along $\uu$, each $G$-occurrence of $w$
represents an unordered edge connecting $c_{2i}w$ with $wc_{2i-1}$
or $c_{2i}w$ with $wc_{2i+1}$, and thus in fact walking along
$\uu$ represents a walk in the graph $B(w)$. Since any factor $aw$
and $wb$ occurs in $\L(\uu)$ infinitely many times, this walk in
the graph $B(w)$ uses all vertices of  $B(w)$. Therefore, the
graph $B(w)$ is connected.

\vspace{\baselineskip}

Now consider a  $G$-palindromic bispecial factor $w$ and  denote
by $\Theta$ the antimorphism  such that $w=\Theta(w)$. We define the
bipartite graph $B(w)$  in the same way as before. If $awb\in
\L(\uu)$ and $ b\neq \Theta(a)$, then $B(w)$ contains with the edge
$  awb$ also the different  edge $\Theta(b)w\Theta(a)$. Therefore,
any $G$-occurrence of $w$  together with the left and the right
neighboring letters corresponds  to a pair of edges in the graph
$B(w)$ unless it represents a $\Theta$-palindromic extension
$aw\Theta(a)$. Let us  replace the graph $B(w)$ by the graph $
B'(w)$ in which vertices are couples $\{aw, w\Theta(a)\}$ and
edges are either couples  $ \{awb, \Theta(b)w\Theta(a)\}$  or
loops representing a $\Theta$-palindromic extension
$\{aw\Theta(a)\}$. Now  we can interpret a walk along $\uu$ as a
walk in the new graph $B'(w)$. Consequently, the  graph $B'(w)$ must be
connected. The connectivity of $B'(w)$ implies that the number of
edges in $B'(w)$ which are not loops is at least equal to $ \#
\Rext(w) - 1$.  Since
$$  \#\{ awb \mid b\neq \Theta(a)\} = 2\times \hbox{number of  edges in $B'(w)$ which are not loops,}$$
we obtain $$\# \Bext(w)= \#\{ awb \mid b\neq \Theta(a)\} + \#
\Pext_\Theta(w) \geq 2 (\# \Rext(w) -1) + \# \Pext_\Theta(w)  $$
As $\# \Rext(w)= \# \Lext(w)$, we deduce
\begin{equation*}
b(w) = \# \Bext(w) - \# \Rext(w)-\# \Lext(w)+1 \geq \#
 \Pext_\Theta(w)- 1. \tag*{\qedhere}
\end{equation*}
\end{proof}

\begin{prop}\label{G_rich_Pext_rovnosti}
Let $\uu$ be an infinite word with language closed under $G$ and $N \in \N$ be $G$-distinguishing on $\uu$.
 The word $\uu$ has Property
$\Gg{G}{N}$  if and only if
 any bispecial factor $w$ of
$\uu$ of length at least $N$  satisfies:
\begin{itemize}
    \item if $w$ is not a $G$-palindrome, then
$$\b(w) = 0,$$
    \item if $w$ is a $\Theta$-palindrome for some $\Theta \in G$, then
$$\b(w) = \# \Pext_{\Theta}(w) - 1;$$
\end{itemize}
and
$$
\Delta \C (N) + \# G \ \ =  \sum_{\Theta \in
G^{(2)}}\Bigl(\P_{\Theta}(N) + \P_{\Theta}(N+1)\Bigr).
$$
\end{prop}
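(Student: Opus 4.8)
The plan is to reduce the whole statement to \Cref{prevod}, which already presents Property $\Gg{G}{N}$ as the conjunction of the equality at $N$ (its condition~(1)) and the identity $\Delta^2\C(n)=\sum_{\Theta\in G^{(2)}}\sum_{w\in\L_n(\uu),\,w=\Theta(w)}(\#\Pext_{\Theta}(w)-1)$ for every $n\geq N$ (its condition~(2)). The equality at $N$ occurs verbatim both in the hypothesis here and in \Cref{prevod}, so it is a shared clause requiring no work beyond invoking \Cref{almostG-Rich}; the entire burden is to show that condition~(2) is equivalent to the stated per-bispecial-factor equalities for $\b$.

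First I would record two simplifications valid for $n\geq N$. Since $\L(\uu)$ is closed under $G$, the word $\uu$ is recurrent, so any factor with $\#\Lext(w)=1$ has $\#\Bext(w)=\#\Rext(w)$ and hence $\b(w)=0$, and symmetrically when $\#\Rext(w)=1$; therefore $\Delta^2\C(n)=\sum_{w\in\L_n(\uu)}\b(w)$ collapses to a sum over bispecial factors only. Next, using that $N$ is $G$-distinguishing together with the remark following \Cref{def:Gdistinguishing} (which distinguishes morphisms as well), if $w$ is a $\Theta$-palindrome with $|w|\geq N$ then $\Theta$ is the unique antimorphism fixing $w$, and since the morphism $\Theta^2$ also fixes $w$ one gets $\Theta^2=\id$, i.e.\ $\Theta\in G^{(2)}$; hence every palindrome of length $\geq N$ is counted exactly once in the double sum of condition~(2). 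Finally, a $\Theta$-palindrome with $\Theta\in G^{(2)}$ that is not bispecial has $\#\Lext(w)=\#\Rext(w)=1$ with unique extension $aw\Theta(a)$, so $\#\Pext_{\Theta}(w)=1$ and it contributes $0$; thus the right-hand side of condition~(2) may likewise be restricted to \emph{bispecial} $\Theta$-palindromes.

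Granting these reductions, condition~(2) reads, for each $n\geq N$, as the single identity $\sum_{w\text{ bispecial}}\b(w)=\sum_{w\text{ bispecial},\,w=\Theta(w)}(\#\Pext_{\Theta}(w)-1)$, where $\Theta$ denotes the unique fixing antimorphism. For the implication ``Property $\Gg{G}{N}$ $\Rightarrow$ the equalities'', I would split the left-hand sum into bispecial non-$G$-palindromes and bispecial $G$-palindromes and apply \Cref{G_rich_Pext_nerovnosti}, which gives $\b(w)\geq 0$ in the first class and $\b(w)\geq\#\Pext_{\Theta}(w)-1$ in the second. Summing these per-factor lower bounds reproduces exactly the right-hand side, while the left total equals that same right-hand side by condition~(2); since every deviation $\b(w)-0$ and $\b(w)-(\#\Pext_{\Theta}(w)-1)$ is nonnegative and they sum to zero, each must vanish. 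This sandwiching is the crux and the step I expect to be the main obstacle: \Cref{G_rich_Pext_nerovnosti} delivers only inequalities for individual factors, and it is precisely the global identity supplied by \Cref{prevod} that forces them all to be tight, yielding $\b(w)=0$ for bispecial non-$G$-palindromes and $\b(w)=\#\Pext_{\Theta}(w)-1$ for bispecial $\Theta$-palindromes.

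For the converse ``the equalities $\Rightarrow$ Property $\Gg{G}{N}$'', I would substitute the assumed per-factor equalities into $\Delta^2\C(n)=\sum_{w\text{ bispecial}}\b(w)$: the non-$G$-palindromic bispecial factors contribute $0$ and each bispecial $\Theta$-palindrome contributes $\#\Pext_{\Theta}(w)-1$, which by the preliminary reduction reassembles into $\sum_{\Theta\in G^{(2)}}\sum_{w=\Theta(w)}(\#\Pext_{\Theta}(w)-1)$. Hence condition~(2) of \Cref{prevod} holds for all $n\geq N$; combined with the assumed equality at $N$ (its condition~(1)), \Cref{prevod} yields Property $\Gg{G}{N}$ and completes the equivalence.
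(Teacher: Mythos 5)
Your proposal is correct and follows essentially the same route as the paper: reduce via \Cref{prevod} to the equality at $N$ plus the identity $\Delta^2\C(n)=\sum_{\Theta\in G^{(2)}}\sum_{w=\Theta(w)}\left(\#\Pext_{\Theta}(w)-1\right)$, prove the forward direction by summing the per-factor inequalities of \Cref{G_rich_Pext_nerovnosti} and forcing tightness from the global identity, and prove the converse by direct substitution. Your preliminary reductions (collapsing the sum to bispecial factors, uniqueness and involutivity of the fixing antimorphism via the $G$-distinguishing hypothesis, and the vanishing contribution of non-bispecial palindromes) are exactly the details the paper leaves implicit, so they strengthen rather than alter the argument.
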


\begin{proof}
\vspace{\baselineskip}

\noindent $(\Leftarrow)$:  The assumption on bilateral orders and
the fact that non-bispecial $\Theta$-pa\-lindromic factors have
a~unique $\Theta$-palindromic extension guarantee the following equality
for all $n \geq N$:
\begin{equation}\label{sec_diff_pal}
\Delta^2 \C(n)= \sum_{w \in \L_n(\uu)}\b(w) = \sum_{\Theta \in
G^{(2)} } \ \sum_{\substack{w \in \L_n(\uu)\\ w=
\Theta(w)}}\left(\#\Pext_{\Theta}(w)-1\right).
\end{equation}
According to \Cref{prevod} it implies that $\uu$ satisfies Property $\Gg{G}{N}$.

\noindent $(\Rightarrow)$:
Let $n \geq N$.
Using
\Cref{G_rich_Pext_nerovnosti} and \Cref{prevod} we obtain

\begin{align*}
\Delta^2 \C(n) &= \sum_{w \in \L_n(\uu)}\b(w) =  \sum_{\substack{w
\in \L_n(\uu)\\ w\neq \Theta(w)\\ \hbox{\tiny for\  all}\
\Theta}}\b(w) +
\sum_{\Theta \in G^{(2)} \quad} \sum_{\substack{w \in \L_n(\uu)\\
w= \Theta(w)}}\b(w) \\
& \geq  \sum_{\Theta \in G^{(2)} \quad} \sum_{\substack{w \in \L_n(\uu)\\
w= \Theta(w)}}\left(\#\Pext_{\Theta}(w)-1\right) = \Delta^2 \C(n).
\end{align*}
As the beginning and the end of our  estimates is the same, the inequalities for $\b(w)$ deduced in
\Cref{G_rich_Pext_nerovnosti} must be equalities, which was
to prove.
\end{proof}


\section{Examples}

The aim of this section is to exhibit examples of $G$-rich words.
As already mentioned in \Cref{ex:genTM}, it is shown in \cite{Sta2011} that for any dihedral group $I_2(m)$ there exist words, namely word $\tt_{b,m}$ for all integers $b \geq 2$, such that they are $I_2(m)$-rich.
Dihedral groups form part of finite Coxeter groups which, according to \Cref{thm:Ggenerovane}, are good candidates for a group $G$ when looking for an example of $G$-rich word.
In this section we provide $2$ examples of $G$-rich words such that $G$ is not a dihedral group.

\newsavebox{\tempbox}
\begin{figure}[h!]%
\sbox{\tempbox}{\begin{minipage}[t]{0.45\textwidth}
\centering\includegraphics[scale=1.2]{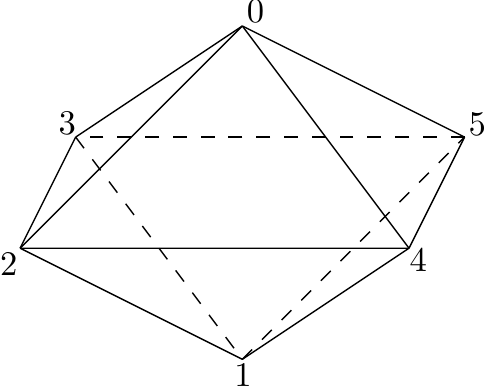}\vspace{\baselineskip}\end{minipage}}%
\centering
\subfloat[][]{ \label{fig:koso} %
\begin{minipage}[t]{0.45\textwidth}
\centering
\vbox to \ht\tempbox{%
\vfil
\includegraphics[scale=1.2]{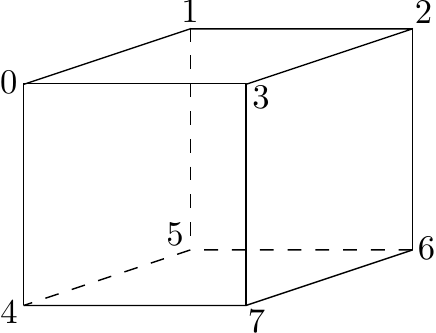}
\vfil}
\vspace{\baselineskip}
\end{minipage}}%
\subfloat[][]{ \label{fig:pyra} \usebox{\tempbox}}%
\caption{(a) Symmetries of $\Lu$ from \Cref{ex:8} represented by the symmetries of an orthogonal prism with rhomb base. (b) Symmetries of $\L(\vv)$ from \Cref{ex:6} represented by two pyramids joint together by their rectangular bases.}
\label{fig:kopy}
\end{figure}

The first group, denoted $G$, is constructed on an $8$-letter alphabet $\mathcal{A}:=\{0,1,\ldots,7\}$.
The antimorphisms  $\Theta_0, \Theta_1$ and $\Theta_2$ are defined on $\mathcal{A}^*$ as follows
\begin{eqnarray*}
\Theta_0:&0 \mapsto 2, 1 \mapsto 1, 2 \mapsto 0, 3 \mapsto 3, 4 \mapsto 6, 5 \mapsto 5, 6 \mapsto 4, 7 \mapsto 7,& \\
\Theta_1:&0 \mapsto 4, 1 \mapsto 5, 2 \mapsto 6, 3 \mapsto 7, 4 \mapsto 0, 5 \mapsto 1, 6 \mapsto 2, 7 \mapsto 3,& \\
\Theta_2:&0 \mapsto 0, 1 \mapsto 3, 2 \mapsto 2, 3 \mapsto 1, 4 \mapsto 4, 5 \mapsto 7, 6 \mapsto 6, 7 \mapsto 5.&
\end{eqnarray*}
The group  $G \subset AM(\mathcal{A}^*)$  is  the group generated by $\Theta_0, \Theta_1$ and $\Theta_2$.
If we label the vertices of an orthogonal prism with rhomb base by the letters of $\A$ as depicted in \Cref{fig:koso}, then the antimorphisms of $G$ correspond to the mirror symmetries of the prism.

The second group, denoted $H$, is on a $6$-letter alphabet $\mathcal{B}:=\{0,1,\ldots,5\}$ and
 $H \subset AM(\mathcal{B}^*)$   is generated by the $3$ following antimorphisms:
\begin{eqnarray*}
\Psi_0:&0 \mapsto 0, 1 \mapsto 1, 2 \mapsto 4, 3 \mapsto 5, 4 \mapsto 2, 5 \mapsto 3,& \\
\Psi_1:&0 \mapsto 1, 1 \mapsto 0, 2 \mapsto 2, 3 \mapsto 3, 4 \mapsto 4, 5 \mapsto 5,& \\
\Psi_2:&0 \mapsto 0, 1 \mapsto 1, 2 \mapsto 3, 3 \mapsto 2, 4 \mapsto 5, 5 \mapsto 4.&
\end{eqnarray*}
The antimorphisms generating the group $H$ can be visualised by the mirror symmetries of the object depicted in \Cref{fig:pyra}.

In fact, the groups $G$ and $H$ are isomorphic to $\Z_2 \times \Z_2 \times \Z_2$.
They may be viewed as group actions of the group $\Z_2 \times \Z_2 \times \Z_2$ on distinct free monoids: $G$ on $\A^*$ and $H$ on $\mathcal{B}^*$.

\begin{example} \label{ex:8}
Let $\varphi: \mathcal{A}^* \mapsto \mathcal{A}^*$ be a morphism defined as
$$
\varphi: 0 \mapsto 01, 1 \mapsto 2, 2 \mapsto 65, 3 \mapsto 4, 4 \mapsto 23, 5 \mapsto 6, 6 \mapsto 47, 7 \mapsto 0.
$$
Denote by $\uu$ the fixed point   of $\varphi$.
In \Cref{sub:ex8}, we show that $\uu$  has its language closed under $G$ and $\uu$ is $G$-rich.
\end{example}

\begin{example} \label{ex:6}
Let $\mu: \mathcal{A}^* \mapsto \mathcal{B}^*$  be a morphism defined as
$$
\mu: 0 \mapsto 15, 1 \mapsto 04, 2 \mapsto 12, 3 \mapsto 03, 4 \mapsto 04, 5 \mapsto 12, 6 \mapsto 03, 7 \mapsto 15.
$$
Let $\vv = \mu(\uu)$.
In \Cref{sec:ex_v}, we show that $\L(\vv)$ is closed under $H$ and $\vv$  is $H$-rich.
\end{example}

The proofs of properties of $\uu$ and $\vv$ are split into several lemmas.  Instead of their complete proofs, we provide just  sketches  or hints for readers.

\subsection{$G$-richness of $\uu$} \label{sub:ex8}

To prove that $\uu$ defined in Example \ref{ex:8}  is $G$-rich, we show in the sequel that $\uu$ has Property
 $\Gg{G}{1}$.  For this reason, we   exploit Proposition \ref{G_rich_Pext_rovnosti}. Therefore, one  needs to study  bispecial factors occurring  in $\uu$.
We use a general method for circular morphisms described in \cite{Kl12}.
In the case of $\varphi$, it yields simple results.
To describe the bispecial factors of $\uu$, we introduce the mapping   $\pi:\{0,2,4,6\} \mapsto  \{0,2,4,6\}$ as follows
$$
\pi: 0 \mapsto 2, 4 \mapsto 0, 2 \mapsto 4, 6 \mapsto 6.
$$
\begin{lem} \label{lem:ex8_BS_dolu}
Let $w = w_0 \cdots w_{n-1}$ be a nonempty bispecial factor of $\uu$. Then $w_{n-1} \in \{0,2,4,6\}$ and
 $\varphi(w) \pi(w_{n-1})$
is a bispecial factor of $\uu$.
Moreover, $\b(w) = \b \left( \varphi(w) \pi(w_{n-1}) \right)$.
\end{lem}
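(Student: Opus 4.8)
The plan is to analyze how special and bispecial factors behave under the morphism $\varphi$, exploiting that $\varphi$ is circular in the sense of \cite{Kl12}: every sufficiently long factor of $\uu$ has a unique decomposition into the blocks $\varphi(a)$, $a\in\A$, up to bounded boundary pieces. This recognizability will be the backbone that lets me transfer extension data between $w$ and its lift.

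First I would determine $\L_2(\uu)$ explicitly. Length-$2$ factors arise either inside an image (the words $01,65,23,47$) or at a junction of two consecutive blocks, and a short bootstrap shows $\L_2(\uu)=\{01,12,26,65,54,47,76,62,23,30,04,40\}$. From this one reads off that the right special letters are exactly $\{0,2,4,6\}$ and the left special letters are again exactly $\{0,2,4,6\}$, each carrying precisely two one-sided extensions. Since any right extension of $w$ is a right extension of its last letter $w_{n-1}$, we have $\Rext(w)\subseteq\Rext(w_{n-1})$; as $w$ is right special this forces $w_{n-1}\in\{0,2,4,6\}$ and in fact $\Rext(w)=\Rext(w_{n-1})$. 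Symmetrically $w_0\in\{0,2,4,6\}$ and $\Lext(w)=\Lext(w_0)$, so every bispecial factor has exactly two left and two right extensions and $\b(w)=\#\Bext(w)-3$. This already proves the first assertion.

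Next I would show that $\varphi(w)\pi(w_{n-1})$ is the correct lift. The crucial observation is that for each $a\in\{0,2,4,6\}$ the two right extensions $b\in\Rext(a)$ have images $\varphi(b)$ starting with one and the same letter, namely $\pi(a)$ (for $a=0$, say, $\Rext(0)=\{1,4\}$ and $\varphi(1)=2$, $\varphi(4)=23$ both begin with $2=\pi(0)$). Hence $\pi(w_{n-1})$ is forced after $\varphi(w)$, so $\varphi(w)\pi(w_{n-1})\in\L(\uu)$, and the letter immediately following $\pi(w_{n-1})$ then distinguishes the two right extensions, giving a bijection $\Rext(w)\to\Rext(\varphi(w)\pi(w_{n-1}))$ (the four cases checked directly). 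For the left side, the map $a\mapsto\text{last letter of }\varphi(a)$ sends $\Lext(w)$ into $\Lext(\varphi(w)\pi(w_{n-1}))$; it is injective because the last letters of the eight images are pairwise distinct, and surjective by circularity, since any left extension of $\varphi(w)\pi(w_{n-1})$ must be the final letter of the preceding block. Thus both one-sided extension sets have size two and $\varphi(w)\pi(w_{n-1})$ is bispecial, proving the second assertion.

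For the equality of bilateral orders it remains to identify $\Bext(w)$ with $\Bext(\varphi(w)\pi(w_{n-1}))$. Combining the two correspondences, $awb\in\L(\uu)$ should hold if and only if $a'\,\varphi(w)\pi(w_{n-1})\,c\in\L(\uu)$, where $a'=\text{last }\varphi(a)$ and $c$ is the right-extension letter associated to $b$. The forward implication is immediate from applying $\varphi$ (using that a right extension whose image has length one is itself followed by a forced letter), and the reverse implication is precisely where recognizability is needed: the unique block decomposition of $a'\varphi(w)\pi(w_{n-1})c$ recovers $a$, $w$, $b$ and yields $awb\in\L(\uu)$. Hence $\#\Bext(w)=\#\Bext(\varphi(w)\pi(w_{n-1}))$, and since both words have two left and two right extensions, $\b(w)=\#\Bext(w)-3=\b(\varphi(w)\pi(w_{n-1}))$. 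I expect the main obstacle to be exactly this surjectivity/recognizability step; the per-letter verifications of the extension tables are routine once the block decomposition is under control, which is why invoking the circularity of $\varphi$ from \cite{Kl12} is essential.
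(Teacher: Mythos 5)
Your proposal is correct and takes essentially the same approach as the paper: the paper's one-sentence proof defers to precisely the facts you establish, namely that the right (and left) special letters are $0,2,4,6$ with exactly two extensions each, together with the block structure of $\varphi$ and its circularity via \cite{Kl12}, which the paper invokes in the preamble of the subsection. Your identification of $\pi(a)$ as the common first letter of $\varphi(b)$ over $b\in\Rext(a)$, and the use of recognizability for the surjectivity of the extension correspondences, is exactly the content the paper compresses into ``follows from the definition of $\varphi$''.
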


\begin{proof}
The claim follows from the definition of $\varphi$ and the fact that right special factors of length $1$ are factors $0, 2, 4$ and $6$,
and each has $2$ right extensions.
\end{proof}

The next statement can be easily deduced from the form of $\varphi$ as well.

\begin{lem}  \label{lem:ex8_BS_nahoru}
Let $w$, $|w| \geq 2$, be a bispecial factor of $\uu$.
Then there exists a unique bispecial factor of $\uu$, say $v = v_0v_1\cdots v_{m-1}$, such that
\begin{equation}\label{eq:novyPalindrom}
w = \varphi(v) \pi(v_{m-1}).
\end{equation}
\end{lem}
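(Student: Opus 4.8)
The plan is to recover $v$ by desubstituting $w$ along $\varphi$, relying on the circularity of $\varphi$ that already underlies \Cref{lem:ex8_BS_dolu}. The key is that $\varphi$ is ``marked'': the two-letter images $\varphi(0)=01$, $\varphi(2)=65$, $\varphi(4)=23$, $\varphi(6)=47$ all end in an odd letter, whereas $\varphi(1)=2$, $\varphi(3)=4$, $\varphi(5)=6$, $\varphi(7)=0$ are single even letters. Hence in $\uu=\varphi(\uu)$ every odd letter occurs only as the second letter of a two-letter block, so a $\varphi$-block boundary falls exactly before every even letter. In particular every factor of $\uu$ has an intrinsic block decomposition with boundaries precisely before the even letters, and a factor that both starts and ends at block boundaries equals $\varphi(v)$ for a unique word $v$, by injectivity of $\varphi$.

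First I would locate the boundaries inside $w=w_0\cdots w_{n-1}$. Because $w$ is left special and every odd letter has a unique left extension (the even letter heading its block), $w_0$ is even. By \Cref{lem:ex8_BS_dolu} applied to the bispecial factor $w$, its last letter $w_{n-1}$ lies in $\{0,2,4,6\}$, hence is even. Therefore the prefix $w'=w_0\cdots w_{n-2}$ begins at a block boundary and is immediately followed inside $w$ by the even letter $w_{n-1}$, so it also ends at a block boundary; thus $w'=\varphi(v)$ for a unique nonempty word $v=v_0\cdots v_{m-1}$.

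The hard part will be to show that $v_{m-1}$ is even, equivalently that $w_{n-2}$ is odd; this is what makes $\pi(v_{m-1})$ defined and, below, forces $v$ to be right special. I would argue by contradiction. If $w_{n-2}$ were even it would terminate a block, and since only the one-letter blocks end in an even letter, $w_{n-2}=\varphi(o)$ for the unique odd $o$ determined by $w_{n-2}$. Then the unique right extension of $o$ together with the marking determine the following block deterministically, so $w_{n-1}$ and every letter that could follow it are forced; this leaves $w$ with a single right extension, contradicting that $w$ is right special. Hence $w_{n-2}$ is odd, $\varphi(v_{m-1})$ is a two-letter block, and $v_{m-1}\in\{0,2,4,6\}$.

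It remains to verify that $v$ is bispecial and that $w=\varphi(v)\pi(v_{m-1})$. For left specialness I would use that a left extension of $w$ is the last letter of the preceding block $\varphi(p)$ with $pv\in\Lu$, and that $p\mapsto$ (last letter of $\varphi(p)$) is a bijection of $\A$; so the left extensions of $v$ and of $w$ correspond bijectively and $v$ inherits $\#\Lext(v)=\#\Lext(w)\geq 2$. For right specialness and the final identity I would use the structural fact, checked directly on $\varphi$, that each even letter $e$ has exactly two followers in $\uu$, one odd and one even, whose images both begin with $\pi(e)$; applied to $e=v_{m-1}$ this yields $w_{n-1}=\pi(v_{m-1})$, and it shows that the (necessarily unique) odd and even right extensions of $w$ pull back to these two distinct followers of $v$, giving $\#\Rext(v)\geq 2$. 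Finally, uniqueness is immediate: in any decomposition $w=\varphi(\tilde v)\pi(\tilde v_{\tilde m-1})$ the word $\varphi(\tilde v)$ must be the length-$(n-1)$ prefix $w'=\varphi(v)$, whence $\tilde v=v$ by injectivity of $\varphi$.
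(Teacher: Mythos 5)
Your proof is correct, and it follows the route the paper intends but never writes out: the paper's entire ``proof'' of this lemma is the remark that it ``can be easily deduced from the form of $\varphi$'' (within the circular-morphism framework of \cite{Kl12} invoked just before \Cref{lem:ex8_BS_dolu}), and your desubstitution argument via the marked block structure of $\varphi$ --- even letters begin image blocks, odd letters occur only as second letters of blocks, so block boundaries sit exactly before the even letters --- is precisely that deduction carried out in full. The structural facts you rely on (odd letters have unique left and right extensions, each even letter $e$ has exactly one odd and one even follower and both followers' images begin with $\pi(e)$, so in particular the odd and the even right extension of $w$ are each forced) all check out against $\L_2(\uu) = \{54, 62, 47, 12, 04, 76, 65, 40, 01, 23, 30, 26\}$ as listed in the paper, so the argument is complete.
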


According to the last two lemmas, all bispecial factors can be  constructed from  bispecial factors of length $1$ using recursively formula \eqref{eq:novyPalindrom}. In fact, all the letters of $\mathcal{A}$ are bispecial factors  and $G$-palindromes.    We show that the formula \eqref{eq:novyPalindrom}   produces  from a  $G$-palindrome again   a   $G$-palindrome.

\begin{lem} \label{lem:ex8_kom}
For all $i \in \Z_3$ and $w \in \Lu$, $w = w_0 \cdots w_{n-1}$, we have
$$
x_{i-1}(w)\Theta_i\varphi(w) = \varphi\Theta_{i-1}(w)y_{i}(w),
$$
where $y_i(w) := \Theta_i \big( \varphi(w_0) \big)_0$ and $x_i(w) = \big( \varphi \Theta_i (w_{n-1}) \big) _ 0$ and
$\big( v \big)_0$ denotes the first letter of a word $v$.
\end{lem}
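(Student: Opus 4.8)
The plan is to exploit that both $\Theta_i\varphi$ and $\varphi\Theta_{i-1}$ are antimorphisms of $\A^*$ (an antimorphism composed with the morphism $\varphi$), so each is determined by its values on single letters, and then to prove the identity by induction on $|w|$. Read correctly, the identity says that $\Theta_i\varphi(w)$ and $\varphi\Theta_{i-1}(w)$ have the same length and are shifts of one another by a single letter, with $x_{i-1}(w)$ and $y_i(w)$ playing the two ``overhang'' roles; the cyclic shift $i\mapsto i-1$ records how $\varphi$ permutes the three involutions generating $G$. (Recall that $y_i(a)$ is the last letter of $\Theta_i\varphi(a)$, equal to $\Theta_i$ of the first letter of $\varphi(a)$, and $x_{i-1}(a)$ is the first letter of $\varphi\Theta_{i-1}(a)$.)

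First I would record two local facts, each a finite verification over the eight letters of $\A$ and the three values of $i\in\Z_3$. \emph{(A) Single-letter overlap.} For every letter $a$ and every $i$, the words $\Theta_i\varphi(a)$ and $\varphi\Theta_{i-1}(a)$ have equal length and there is a common word $M$ with $\Theta_i\varphi(a)=M\,y_i(a)$ and $\varphi\Theta_{i-1}(a)=x_{i-1}(a)\,M$; equivalently, deleting the last letter of $\Theta_i\varphi(a)$ and the first letter of $\varphi\Theta_{i-1}(a)$ produces the same word. The lengths agree because each $\Theta_i$ preserves the parity of a letter while $|\varphi(a)|$ depends only on that parity ($2$ for even letters, $1$ for odd ones). \emph{(B) Junction.} For every factor $ab\in\L_2(\uu)$ and every $i$ one has $y_i(b)=x_{i-1}(a)$, i.e. the last letter of $\Theta_i\varphi(b)$ equals the first letter of $\varphi\Theta_{i-1}(a)$. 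Here the hypothesis $w\in\Lu$ genuinely enters: this equality fails for arbitrary letter pairs and holds exactly for the two-letter words occurring in $\uu$, which form a finite set computable from $\varphi$.

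The base case $|w|=1$ is immediate from (A): with $a=w$ we get $x_{i-1}(a)\,\Theta_i\varphi(a)=x_{i-1}(a)\,M\,y_i(a)=\varphi\Theta_{i-1}(a)\,y_i(a)$. For the inductive step write $w=aw''$ with $a=w_0$ and $w''=w_1\cdots w_{n-1}$. Since the maps are antimorphisms, $\Theta_i\varphi(w)=\Theta_i\varphi(w'')\,\Theta_i\varphi(a)$ and $\varphi\Theta_{i-1}(w)=\varphi\Theta_{i-1}(w'')\,\varphi\Theta_{i-1}(a)$, while $x_{i-1}(w)=x_{i-1}(w'')$ (same last letter) and $y_i(w)=y_i(a)$ (same first letter). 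Substituting the induction hypothesis for $w''$ into $x_{i-1}(w)\,\Theta_i\varphi(w)$ and cancelling the common prefix $\varphi\Theta_{i-1}(w'')$, the identity reduces to $y_i(w'')\,\Theta_i\varphi(a)=\varphi\Theta_{i-1}(a)\,y_i(a)$; applying (A) and cancelling the common suffix $M\,y_i(a)$ leaves exactly $y_i(w'')=x_{i-1}(a)$. Since $y_i(w'')$ equals $y_i(w_1)$, where $w_1$ is the first letter of $w''$, this is precisely (B) applied to the factor $w_0w_1\in\L_2(\uu)$, closing the induction.

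I expect the main obstacle to be organizational rather than deep. The real content is isolating (B) as the single point where membership in $\Lu$ is used, and recognizing the one-letter shift structure of (A) together with the correct cyclic bookkeeping of the indices $i,i-1$ and the first/last-letter roles of $x_{i-1}$ and $y_i$. Once $\L_2(\uu)$ is determined, both (A) and (B) are routine finite checks; the delicate part is getting these index and boundary conventions exactly right so that the telescoping in the inductive step collapses to (B).
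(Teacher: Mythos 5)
Your proposal is correct and is essentially the paper's own argument: the paper likewise proves the identity by induction on $|w|$, reducing the inductive step to a finite case check over $i \in \Z_3$ and the two-letter factors $\L_2(\uu) = \{54, 62, 47, 12, 04, 76, 65, 40, 01, 23, 30, 26\}$ at the junction, which is precisely what your facts (A) and (B) encapsulate. The only inessential differences are that you extend $w$ on the left rather than on the right and that you isolate the letter-level verifications as two explicit finite checks (both of which indeed hold for $\varphi$, $\Theta_0$, $\Theta_1$, $\Theta_2$).
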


\begin{proof}[Sketch of the proof]
The proof is done by induction on $n$.
Supposing the claim holds for $n$, one needs to deal with different cases according to the value $i$ and possible factors $w_{n-1}w_n \in \L_2(\uu) = \{54, 62, 47, 12, 04, 76, 65, 40, 01, 23, 30, 26\}$.
The claim then follows from the definitions of $\varphi$, $\Theta_i$ and $\Theta_{i-1}$.
\end{proof}

\begin{lem}\label{pasuji}
Let $w = w_0 \cdots w_{n-1}$ be a nonempty bispecial factor of $\uu$.
Then $w$ is a \mbox{$\Theta$-palindrome}, $\Theta \in G$, and $\b(w) = \# \PextT(w) - 1$.
\end{lem}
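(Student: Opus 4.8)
The plan is to prove the statement by induction on $|w|$, using the recursive structure of bispecial factors established in \Cref{lem:ex8_BS_dolu,lem:ex8_BS_nahoru} together with the commutation identity of \Cref{lem:ex8_kom}. The base case consists of the bispecial factors of length $1$, which are exactly the single letters of $\A$; each letter is trivially a $\Theta$-palindrome for a suitable antimorphism $\Theta \in G$, and the bilateral order equality $\b(w) = \# \PextT(w) - 1$ can be checked directly from $\L_2(\uu)$. For the inductive step, given a bispecial factor $w$ with $|w| \geq 2$, \Cref{lem:ex8_BS_nahoru} supplies a shorter bispecial factor $v = v_0 \cdots v_{m-1}$ with $w = \varphi(v)\pi(v_{m-1})$, to which the induction hypothesis applies: $v$ is a $\Theta$-palindrome for some $\Theta = \Theta_j \in G$ and $\b(v) = \#\Pext_{\Theta}(v) - 1$.

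The central task is to show that $w = \varphi(v)\pi(v_{m-1})$ is again a $G$-palindrome. First I would use $v = \Theta_j(v)$ and apply $\varphi$, then invoke \Cref{lem:ex8_kom} (with the appropriate index $i$ so that $\Theta_{i-1} = \Theta_j$) to rewrite $\varphi(\Theta_j(v))$ and move $\varphi$ past the antimorphism. This produces an antimorphic image of $\varphi(v)$ up to boundary letters $x_{i-1}(v)$ and $y_i(v)$; the point is that the prepended and appended letters dictated by \Cref{lem:ex8_kom} match exactly the letter $\pi(v_{m-1})$ adjoined in \eqref{eq:novyPalindrom} and the corresponding first letter, so that $\varphi(v)\pi(v_{m-1})$ comes out invariant under the antimorphism $\Theta_i \in G$. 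Thus $w$ is a $\Theta_i$-palindrome. Finally, the bilateral order equality transfers from $v$ to $w$: by \Cref{lem:ex8_BS_dolu} we have $\b(w) = \b(v)$, and one must check that the number of palindromic extensions is preserved as well, i.e. $\#\Pext_{\Theta_i}(w) = \#\Pext_{\Theta_j}(v)$, which follows because $\varphi$ is injective on the relevant extensions and maps $\Theta_j$-palindromic extensions of $v$ bijectively to $\Theta_i$-palindromic extensions of $w$.

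I expect the main obstacle to be the bookkeeping in the inductive step: one must track precisely how the index $i$ of the antimorphism changes under one application of $\varphi$ (i.e. verify that $\Theta$-palindromicity of $v$ forces a specific, correctly-indexed $\Theta_i$ for $w$ via the cyclic index shift $i \mapsto i-1$ built into \Cref{lem:ex8_kom}), and to confirm that the boundary letters $x_{i-1}(v)$, $y_i(v)$ genuinely coincide with the letters appearing in \eqref{eq:novyPalindrom}. The verification of $\#\Pext_{\Theta_i}(w) = \#\Pext_{\Theta_j}(v)$ is conceptually routine but requires care, since it rests on $\varphi$ preserving the combinatorial pattern of two-sided palindromic extensions; this is where a short case analysis over the admissible factors in $\L_2(\uu)$ will be needed, mirroring the one already used in the sketch of \Cref{lem:ex8_kom}.
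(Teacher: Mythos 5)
Your proposal coincides with the paper's proof for most of its length: the paper also inducts along the recursion of \Cref{lem:ex8_BS_dolu,lem:ex8_BS_nahoru}, starting from the bispecial factors of length one, and it handles palindromicity exactly as you plan, via \Cref{lem:ex8_kom} (if $v$ is a $\Theta_{i-1}$-palindrome, then $\pi(v_{m-1})=y_i(v)$ and hence $\varphi(v)\pi(v_{m-1})$ is a $\Theta_i$-palindrome). The genuine divergence is in how the equality $\b(w)=\#\Pext_{\Theta}(w)-1$ is obtained. You propagate this equality inductively, which forces you to prove that $\varphi$ induces a bijection between $\Theta_{i-1}$-palindromic extensions of $v$ and $\Theta_i$-palindromic extensions of $w$. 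The paper avoids this entirely: it propagates only the value $\b(w)=0$ (together with the fact that every bispecial factor has exactly two left and two right extensions), deduces from the existence of infinitely many $\Theta_i$-palindromes for every $i\in\Z_3$ that $\Lu$ is closed under $G$, and then concludes with what is in effect a parity argument: $\#\Bext(w)=3$, the map $awb\mapsto\Theta(b)w\Theta(a)$ is an involution of $\Bext(w)$ whose fixed points are precisely the $\Theta$-palindromic extensions, an involution of a $3$-element set has $1$ or $3$ fixed points, and $3$ is impossible because there are only two left extensions; hence $\#\Pext_{\Theta}(w)=1$ and $\b(w)=0=\#\Pext_{\Theta}(w)-1$. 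The paper's route buys two things: no analysis of how two-sided extensions behave under $\varphi$ is needed, and closure of $\Lu$ under $G$ --- required later when \Cref{G_rich_Pext_rovnosti} is applied --- comes out as a byproduct.

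If you carry out your route, be aware that the deferred case analysis is the real crux and is less local than ``injectivity of $\varphi$'' suggests. Since $\varphi(1),\varphi(3),\varphi(5),\varphi(7)$ are single letters, the right-extension letter of $w$ attached to a right extension $b$ of $v$ is read not from $\varphi(b)$ but from the image of the letter following $b$; this works only because $1,3,5,7$ are not right special, so that next letter is forced. Moreover, surjectivity of your correspondence needs $\#\Bext(v)=\#\Bext(w)=3$, i.e., you must propagate $\b=0$ and the $2+2$ extension counts alongside your induction hypothesis anyway. Two further points: the base case consists of the letters $0,2,4,6$ only, since by $\L_2(\uu)$ the letters $1,3,5,7$ are neither left nor right special (the paper's surrounding prose has the same slip); and your proof as written would leave closure of $\Lu$ under $G$ unestablished, so it has to be derived separately (as in the paper, from the abundance of $\Theta_i$-palindromes for each $i$) before \Cref{G_rich_Pext_rovnosti} can be invoked in the final proof of $G$-richness.
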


\begin{proof}
As the bilateral order of a bispecial factor of length $1$ equals $0$,
according to \Cref{lem:ex8_BS_dolu,lem:ex8_BS_nahoru},
all bispecial factors have their bilateral order equal to $0$.
It is also clear that they have $2$ right and $2$ left extensions.

If $w = w_0 \cdots w_{n-1}$ is a nonempty bispecial factor and if $w$ is a $\Theta_{i-1}$-palindrome
for $i \in \Z_3$,
one can show that $\pi(w_{n-1}) =  y_{i}(w)$ and
thus, $\varphi(w) \pi(w_{n-1})$ is a $\Theta_i$-palindrome.
Since the bispecial factors of length $1$ are $\Theta_2$-palindromes,
all bispecial factors are $\Theta_i$-palindromes for some $i \in \Z_3$.

It follows that for all $i \in \Z_3$, $\Lu$ contains infinitely many $\Theta_i$-palindromes.
Therefore, $\Lu$ is closed under $G$.

Let $w$ be a nonempty bispecial factor.
Since $\b(w) = 0$, $w$ has $2$ left and $2$ right extensions, $w$ is a $\Theta$-palindrome for a unique $\Theta \in G$,
and $\Lu$ is closed under $G$,
one can see that the number of $\Theta$-palindromic extensions of $w$ is $1$.
\end{proof}

\begin{proof} [Proof of $G$-richness of $\uu$ defined  in  \Cref{ex:8}]

 At first, we realize that
 the  generators  $\Theta_0, \Theta_1$ and $\Theta_2$  of the group $G$ guarantee the  number $1$ to be  $G$-distinguishing on any infinite word over $\A$.
Because of \Cref{pasuji} and  \Cref{G_rich_Pext_rovnosti},
it remains to verify that  $\Delta \C(1) +\# G $ equals the number of all $G$-palindromes of length $1$ and $2$.
One can readily  see that  $\Delta \C(1) = 4$, $\# G = 8$,  the number of $G$-palindromes of length $1$ is $8$ and the number of $G$-palindromes of length $2$ is $4$.
\end{proof}

\subsection{$H$-richness of $\vv$} \label{sec:ex_v}

The proof of $H$-richness of $\vv$ is very similar to the previous proof.
In order to use \Cref{G_rich_Pext_rovnosti},
we explore the bilateral orders and $H$-palindromic extensions of bispecial factors of $\vv$.

We define the morphism $\eta:  \A^* \mapsto \B^*$ as
$$ \eta: 0 \mapsto 041, 1\mapsto 120, 2 \mapsto 031, 3\mapsto 150, 4 \mapsto 150, 5 \mapsto 041, 6\mapsto 120, 7 \mapsto 031.$$
Let $w = w_0 \cdots w_{n-1} \in \A^*$ be a nonempty factor of $\uu$.
It follows from $\L_2(\uu)$ and the definition of $\mu$ that $\mu(w) \eta( w_{n-1})$ is a factor of $\vv$.
The following lemma summarizes the relation between the bispecial factors of $\vv$ and of $\uu$.

\begin{lem} \label{lem:ex6_BS}
Let $w \in \Lu$, $w = w_0 \cdots w_{n-1}$, be a nonempty bispecial factor.
Then $\mu(w) \eta( w_{n-1})$ is a bispecial factor of $\vv$.

On the other hand, if $v \in \L(\vv)$, $|v| \geq 5$, is a bispecial factor of $\vv$, then there exists a unique nonempty bispecial factor $w \in \Lu$ such that $v = \mu(w) \eta( w_{n-1})$.
\end{lem}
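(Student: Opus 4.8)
The whole argument rests on a clean synchronization property of the coding morphism $\mu$. The four images $\mu(\ell)$ are exactly the blocks $15,04,12,03$; their first letters lie in $\{0,1\}$, their second letters in $\{2,3,4,5\}$, and the second letter alone determines the block. Hence every factor of $\vv$ parses uniquely into $\mu$-blocks, a letter in $\{0,1\}$ being always the first letter of a block and a letter in $\{2,3,4,5\}$ always the second. The second structural fact I would isolate concerns the four fibres of $\mu$, namely $\{0,7\}$, $\{1,4\}$, $\{2,5\}$ and $\{3,6\}$: reading off $\L_2(\uu)$ one sees that (a) two letters in one fibre have disjoint sets of right extensions in $\uu$, (b) each fibre meets the set of (left and right) special letters $\{0,2,4,6\}$ in exactly one letter, and (c) the two right extensions of each right special letter form precisely one fibre and therefore have equal $\mu$-image. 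These observations drive both directions.

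For the forward direction, let $w=w_0\cdots w_{n-1}$ be a nonempty bispecial factor; as noted just before the lemma, $\mu(w)\eta(w_{n-1})$ is already a factor of $\vv$. Right-specialness I would settle by a finite case analysis over $w_{n-1}\in\{0,2,4,6\}$ (the only right special letters, by \Cref{lem:ex8_BS_dolu}). By (c) the two right extensions of $w$ have the same $\mu$-image, so the two continuations of $w$ in $\vv$ coincide up to and including the block $\eta(w_{n-1})$ and split exactly one letter later; this is precisely the information that $\eta$ encodes, and it gives $\mu(w)\eta(w_{n-1})$ two right extensions. Left-specialness is cleaner: since the word begins with $\mu(w_0)$, whose first letter lies in $\{0,1\}$, it starts at a block boundary, so its left extensions in $\vv$ are the second letters of $\mu(a)$ over the left extensions $a$ of $w$; two such $a$ cannot share a fibre, since by (a) they could not then share the right extension $w_0$, so their images differ and the word is left special.

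For the backward direction, let $v\in\L(\vv)$ be bispecial with $|v|\ge 5$. I would first place $v$ in the block grid. A factor beginning with a letter of $\{2,3,4,5\}$ has a forced preceding letter and hence a unique left extension, so a left special $v$ must begin in $\{0,1\}$, i.e.\ at a block boundary. At the right end, (c) shows that for every letter the $\mu$-images of its right extensions share the same first letter, so a factor ending with a complete block has a unique right extension; thus a right special $v$ ends in an incomplete block whose last letter lies in $\{0,1\}$. The unique parse is then $n+1$ full blocks followed by one letter, whence $|v|=2n+3$ and $v=(B_0\cdots B_{n-1})\,t$ with $t$ of length $3$, and the reversed form of the right-special computation identifies $t$ with one of $041,031,150,120=\eta(0),\eta(2),\eta(4),\eta(6)$.

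The crux, and the only genuine obstacle, is uniqueness of the $\uu$-preimage, because $\mu$ and $\eta$ are both two-to-one; here (a) and (b) force a right-to-left reconstruction. The last letter is pinned down since $w_{n-1}\in\{0,2,4,6\}$ meets the fibre of the block $B_{n-1}$ in a single element, equivalently it is read off from $t=\eta(w_{n-1})$. Having fixed $w_{i+1}$, the block $B_i$ leaves two candidates for $w_i$ forming one fibre, and by (a) exactly one of them admits $w_{i+1}$ as a right extension; so each $w_i$, and hence the whole factor $w=w_0\cdots w_{n-1}$ of $\uu$, is uniquely determined and satisfies $v=\mu(w)\eta(w_{n-1})$. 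Reversing the forward arguments then shows that $w$ is bispecial, the left- and right-specialness of $v$ transferring back through the same fibre properties, which yields both existence and uniqueness. I expect the left/right speciality bookkeeping in the presence of the collapsing fibres to demand the most care, whereas the synchronization makes the parsing itself routine.
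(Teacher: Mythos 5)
Your groundwork is verifiably correct, and for the record the paper contains no proof of this lemma at all (that section explicitly gives ``just sketches or hints''), so there is no authorial argument to compare against; your synchronization-plus-fibre route is the natural one the authors leave to the reader. I checked your structural facts: the blocks $15,04,12,03$ have first letters in $\{0,1\}$, last letters in $\{2,3,4,5\}$, the last letter determines the block; the $\mu$-fibres are $\{0,7\},\{1,4\},\{2,5\},\{3,6\}$; and (a), (b), (c) all hold (the right-extension sets of $0,2,4,6$ are exactly the fibres $\{1,4\},\{3,6\},\{0,7\},\{2,5\}$). Your forward direction is complete as written, and in the backward direction the parsing of $v$ and the identification $t\in\{041,031,150,120\}$ are also correct.

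The gap sits in your reconstruction step, precisely at ``meets the fibre of the block $B_{n-1}$ in a single element, \emph{equivalently} it is read off from $t=\eta(w_{n-1})$.'' These two pinnings of $w_{n-1}$ are \emph{not} equivalent as letter-level bookkeeping: take $B_{n-1}=15$ and $t=031$; the special letter of the fibre $\{0,7\}$ is $0$, yet the letter of that fibre with $\eta$-image $031$ is $7$, and the word $\cdots 15\,031$ genuinely occurs in $\vv$ (preimage $\cdots 76$ followed by a letter of $\{2,5\}$). Such a factor is not right special, which is exactly the point: the agreement of the two pinnings is where right-specialness of $v$ must be invoked, and your proposal never does so. The missing argument runs as follows: writing $t=Cx$ with $C$ the last full block, right-specialness of $v$ forces both letters $b,b'$ of $\mu^{-1}(C)$ to occur over $C$ in occurrences of $v$; each is a right extension of the letter over $B_{n-1}$, which lies in $\{a,a'\}$ with $a$ special; since $a'$ has a single right extension and, by (a) and the disjointness of distinct fibres, $a$ accounts for either both of $b,b'$ or neither, the only possibility is that the right-extension fibre of $a$ equals $\{b,b'\}$ and that \emph{every} occurrence of $v$ carries $a$ over $B_{n-1}$. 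This simultaneously yields $\eta(a)=t$, anchors your downward induction to the $\uu$-preimage of an actual occurrence of $v$ (upgrading the ``at most one'' that (a) gives to the ``exactly one'' you assert, and certifying that the reconstructed word is a factor of $\uu$), and makes the transfer of bispecialness back to $w$ legitimate. Without this insertion the reconstruction produces a string of letters that is not yet known to lie in $\L(\uu)$, nor to satisfy $\eta(w_{n-1})=t$; with it, both your existence and uniqueness claims go through.
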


\begin{lem} \label{lem:ex6_kom}
Let $i \in \Z_3$.
If $w \in \Lu$ is a nonempty $\Theta_i$-palindrome,
then the factor $\mu(w) \eta(w_{n-1}) \in \L(\vv)$ is a $\Psi_i$-palindrome.
\end{lem}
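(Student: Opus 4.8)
The plan is to derive the lemma from a \emph{commutation identity} valid for every factor, not merely for palindromes: for each $i \in \Z_3$ and each nonempty $w = w_0 \cdots w_{n-1} \in \Lu$,
\begin{equation}\label{eq:komut_mu}
\Psi_i\bigl(\mu(w)\eta(w_{n-1})\bigr) = \mu\bigl(\Theta_i(w)\bigr)\,\eta\bigl(\Theta_i(w_0)\bigr),
\end{equation}
where $\Theta_i(w_0)$ is exactly the last letter of $\Theta_i(w)$, so that the right-hand side has again the shape ``$\mu$ of a word, followed by $\eta$ of its last letter''. Granting \eqref{eq:komut_mu}, the lemma is immediate: if $w$ is a $\Theta_i$-palindrome then $\Theta_i(w) = w$, so $\mu(\Theta_i(w)) = \mu(w)$, and comparing last letters in $\Theta_i(w) = w$ gives $\Theta_i(w_0) = w_{n-1}$, whence $\eta(\Theta_i(w_0)) = \eta(w_{n-1})$. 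The right-hand side of \eqref{eq:komut_mu} is then precisely $\mu(w)\eta(w_{n-1})$, i.e.\ this word is a $\Psi_i$-palindrome.

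To prove \eqref{eq:komut_mu} I would induct on $n = |w|$, peeling off the first letter. The base case $n=1$ is the single-letter identity $\Psi_i(\mu(a)\eta(a)) = \mu(\Theta_i(a))\eta(\Theta_i(a))$, checked directly for all $a \in \A$ and $i \in \Z_3$ from the tables defining $\mu$, $\eta$, $\Theta_i$, $\Psi_i$. For the inductive step write $w = w_0 w'$ with $w' = w_1 \cdots w_{n-1}$; since $\Psi_i$ is an antimorphism and $\mu$ a morphism, and since $w'$ has the same last letter $w_{n-1}$, the induction hypothesis applied to $w'$ yields
\[
\Psi_i\bigl(\mu(w)\eta(w_{n-1})\bigr) = \Psi_i\bigl(\mu(w')\eta(w_{n-1})\bigr)\,\Psi_i\bigl(\mu(w_0)\bigr) = \mu\bigl(\Theta_i(w')\bigr)\,\eta\bigl(\Theta_i(w_1)\bigr)\,\Psi_i\bigl(\mu(w_0)\bigr).
\]
As $\mu(\Theta_i(w)) = \mu(\Theta_i(w'))\,\mu(\Theta_i(w_0))$, cancelling the common prefix $\mu(\Theta_i(w'))$ reduces \eqref{eq:komut_mu} to the \emph{local identity}
\begin{equation}\label{eq:local}
\eta\bigl(\Theta_i(w_1)\bigr)\,\Psi_i\bigl(\mu(w_0)\bigr) = \mu\bigl(\Theta_i(w_0)\bigr)\,\eta\bigl(\Theta_i(w_0)\bigr),
\end{equation}
which involves only the first two letters $w_0 w_1$ of $w$.

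I expect \eqref{eq:local} to be the main obstacle, and it is exactly where the hypothesis $w \in \Lu$ is used: the identity fails for arbitrary pairs of letters but holds once $w_0 w_1 \in \L_2(\uu) = \{54,62,47,12,04,76,65,40,01,23,30,26\}$. Its verification is a finite case analysis over these twelve admissible two-letter factors and the three values of $i$. The structural reason it works is that the right-hand side of \eqref{eq:local} is independent of $w_1$, while on the left the word $\eta(\Theta_i(w_1))$ turns out to be the same for \emph{every} admissible continuation $w_1$ of a fixed $w_0$ (this reflects the overlap built into $\eta$, whose image of a letter is $\mu$ of the following letter with one extra symbol appended). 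Once this finite check and the base case are in place, the induction closes, \eqref{eq:komut_mu} holds, and the lemma follows; this is entirely parallel to \Cref{lem:ex8_kom}, with the role of the boundary letters $x_i,y_i$ now played by the single trailing block $\eta(w_{n-1})$.
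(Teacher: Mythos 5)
Your proof is correct, but it takes a genuinely different route from the paper's. The paper proves the lemma by induction on the length of the palindrome itself: fixing $i$, it assumes the claim for a $\Theta_i$-palindrome $w$, passes to a two-sided extension $zw\Theta_i(z) \in \Lu$, and runs the finite case analysis over the possible factors $zw_0 \in \L_2(\uu)$; the induction thus only ever visits palindromic factors, grown outward from the center. You instead establish an unconditional commutation identity, $\Psi_i\bigl(\mu(w)\eta(w_{n-1})\bigr) = \mu\bigl(\Theta_i(w)\bigr)\,\eta\bigl(\Theta_i(w_0)\bigr)$, valid for \emph{every} nonempty factor $w$ of $\uu$, by peeling off the first letter, and palindromicity enters only at the very end as a one-line corollary. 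This is exactly the strategy the paper itself uses for the analogous statement about $\uu$ (\Cref{lem:ex8_kom}), so your argument treats the two examples uniformly, and it confines all finite checking to the single-letter base case ($8$ letters $\times$ $3$ antimorphisms) and the local two-letter identity ($12$ factors of $\L_2(\uu)$ $\times$ $3$ antimorphisms). I verified a substantial number of these cases from the defining tables (e.g.\ for $w_0w_1 = 01$ and $i=0$ both sides equal $12031$; for $w_0w_1=54$ and $i=2$ both sides equal $15031$) and they hold, as does your claim that the identity genuinely fails off $\L_2(\uu)$ (for $w_0w_1 = 00$, $i=0$ the two sides are $03131$ and $12031$). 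The trade-off: your induction must carry a statement about non-palindromic factors, which the paper's center-outward induction avoids, but in exchange the inductive step reduces cleanly to a local identity independent of the palindromic structure, and the total verification burden is comparable.
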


\begin{proof}[Sketch of the proof]
We induce on the length of $w$.
Fix $i \in \Z_3$.
Suppose the claim holds for $w = w_0 \cdots w_{n-1} = \Theta_i(w)$.
Take $z \in \A$ such that $zw\Theta_i(z) \in \Lu$.
The proof follows from the definition of $\mu$, $\eta$, and possible factors $zw_0 \in \L_2(\uu)$.
\end{proof}

\begin{proof}[Proof of $H$-richness of $\vv$ defined  in  \Cref{ex:6}]
According to the previous lemma, it is clear that $\L(\vv)$ is closed under $H$.
The properties of $\Lu$ also imply that all bispecial factors of $\vv$ of length greater than or equal to $5$ have bilateral order $0$ and one $\Theta$-palindromic extension, where $\Theta \in H$ is the unique antimorphism fixing the bispecial factor.
For shorter bispecial factors, of length greater than $1$, this property needs to be verified by hand and is left to the reader.

Since $2$ is an $H$-distinguishing number on $\vv$, \Cref{G_rich_Pext_rovnosti} requires to evaluate $\Delta \C(2)$, $\PT(2)$ and $\PT(3)$ for all involutive antimorphism $\Theta \in H$.
It is easy to verify that $\Delta \C(2) = 4$, $\sum_{\Theta \in H^{(2)}} \PT(2) = 0$ and $\sum_{\Theta \in H^{(2)}} \PT(3) = 12$.
Since $\# H = 8$, according to \Cref{G_rich_Pext_rovnosti}, $\vv$ satisfies Property $\Gg{H}{2}$.

To claim that $\vv$ is $H$-rich, we need to verify that $\vv$  satisfies $\Gg{H}{1}$.
Thus, it remains to show that all loops in $\Gamma_1(\vv)$ are $H$-palindromes and the graph obtained from $\Gamma_1(\vv)$ by removing loops is a tree.
Since it can be easily verified by hand, the word $\vv$ is $H$-rich.
\end{proof}

Denote for all $i \in \Z_3$ by $H_i$ the subgroup of $H$ generated by $\Psi_i$ and $\Psi_{(i+1 \mod 3)}$.
It is easy to verify that $\# H_i = 4$ for all $i$, the number $1$ is $H_0$-distinguishing and $H_1$-distinguishing, and the number $2$ is $H_2$-distinguishing.
It follows from the last proof that $\P_{\Psi_i}(3) = 4$ and $\P_{\Psi_i}(2) = 0$ for all $i \in \Z_3$.
One can also verify that $\P_{\Psi_0}(1) = \P_{\Psi_2}(1) = 2$ and $\P_{\Psi_1}(1) = 4$.
Since $\Delta \C(1) = 2$, using \Cref{G_rich_Pext_rovnosti} we get that the word $\vv$ is $H_0$-rich, $H_1$-rich.
Since $\Delta \C(2) = 4$, again using \Cref{G_rich_Pext_rovnosti} we get that the word $\vv$ is almost $H_2$-rich (it satisfies $\Gg{H_2}{2}$). In fact, it can be shown that the word $\vv$ satisfies $\Gg{H_2}{1}$ and thus it is also $H_2$-rich.

\section{Comments and open problems}

\begin{itemize}

\item  The dihedral groups $I_2(m)$  form a special class of finite
Coxeter groups which belong to a broader class of groups generated by involutive elements.
As shown in \cite{Sta2011} and recalled in \Cref{ex:genTM}, for any dihedral group there exists a
$I_2(m)$-rich word.
Is it possible for any given finite group generated by involutive antimorphisms or at least a given finite Coxeter group $G$ to find a $G$-rich word?

We believe that an approach using a generalized palindromic closure operator as introduced in the last chapter of \cite{LuLu} might be helpful.

\item For $\# G > 2$, the list of examples of $G$-rich words is very short and the
list of almost $G$-rich words (which are not $G$-rich) is empty.
In \cite{GlJuWiZa}, Glen et al. described  a class of morphisms such that
morphic image of a rich word under a morphism from this class  has a finite
 nonzero defect. Find a class of morphisms producing almost
$G$-rich words with finite nonzero defect by applying a
morphism from this class to a $G$-rich word.

\item For a word $\uu$ with language closed under reversal, Brlek
and Reutenauer conjectured  in  \cite{BrRe-conjecture}  for
the defect $D(\uu)$ that
$$ 2D(\uu) = \sum_{n\in \mathbb{N}} T(n), \quad
\hbox{where}\ \  T(n) := \Delta \C (n) + 2 - \P(n+1) - \P(n).$$
The conjecture was shown in \cite{BaPeSta5}.

Can the $G$-defect of an infinite word $\uu$ be expressed  using
the differences between right and left sides of inequalities  in
\eqref{mistoStromovi}?

\item In \Cref{sec:clasika}, definitions of rich words and
 $\Theta$-rich words were reminded. In our new terminology,
they are  $\{\id,R\}$-rich words and  $\{\id,\Theta\}$-rich words
respectively. The groups $\{\id, R\}$ and $\{\id,\Theta\}$ are
clearly isomorphic.
In \cite{BuLu}, it is shown that a so-called $\Theta$-standard word with seed,
which is almost $\Theta$-rich, is a morphic image of a standard Arnoux-Rauzy words, which is rich.
In \cite{PeSta_Milano_IJFCS},  we have a more general case:
any uniformly recurrent almost $\Theta$-rich word is a
morphic image of a rich word. Is an almost $G_1$-rich word
related to a $G_2$-rich word for some group $G_2$ isomorphic to
$G_1$?

\item Let $\uu$ be an infinite word having language closed under a group $G$.
The closedness under $G$ can be exploited to estimate the number of distinct frequencies of factors of the same length $n$.
In \cite{Ba2011}, an upper bound on this number is given (for $n$ being a $G$-distinguishing number).
The estimate is based on the inequality from \Cref{nerovnostProVice}.
Looking at the proof of the estimate, it can be seen that the only candidates for reaching the upper bound for all sufficiently large $n$ are almost $G$-rich words. However, as noted in \cite{Ba2011}, in our words, almost $G$-richness does not imply the upper bound to hold for all sufficiently large $n$.

\item The Thue-Morse word is $G$-rich, where $G$ is generated by two commuting antimorphisms $R$ and $E$.
However, it is not $G_1$-rich while taking a proper subgroup $G_1$ of $G$.
In our considerations, we did not assume the group $G$ to be the maximal group of symmetries such that an infinite word $\uu$ is closed under $G$.

Suppose $\uu$ is an infinite word having language closed under a group $G$.
Let $G_1$ be a proper subgroup of $G$ containing at least one antimorphism.
Suppose $\uu$ is both almost $G$-rich and almost $G_1$-rich.
Let $N$ be a $G$-distinguishing number.
Then, according to \Cref{almostG-Rich}, we can for all $n \geq N$ write
\begin{eqnarray*}
\Delta \C (n) + \# G &=&  \sum_{\Theta \in G^{(2)}} \Bigl(\P_{\Theta}(n) + \P_{\Theta}(n+1)\Bigr) \text \quad { and } \\
\Delta \C (n) + \# G_1  &=&  \sum_{\Theta \in G_1^{(2)}} \Bigl(\P_{\Theta}(n) + \P_{\Theta}(n+1)\Bigr).
\end{eqnarray*}
Thus, we get
\begin{equation}\label{com:odecetene}
\#G - \#G_1 = \sum_{\Theta \in G^{(2)} \setminus G_1^{(2)}} \Bigl(\P_{\Theta}(n) + \P_{\Theta}(n+1)\Bigr).
\end{equation}
Since $G_1$ is a proper subgroup of $G$, we have $\#G = \ell \# G_1$ for $\ell > 1$.
Take $\Theta \in G^{(2)} \setminus G_1^{(2)}$ and $w \in \Lu$, $|w| \geq N$, such that $\Theta(w) = w$.
Then for all $v \in [w]$, it can be show that there exists $\Psi \in G^{(2)} \setminus G_1^{(2)}$ such that $v$ is a $\Psi$-palindrome.
Since $\# [w] = \frac{\#G}{2}$, the right side of \eqref{com:odecetene} equals $k(n) \frac{\#G}{2}$ for $k(n) \in \N$.
We get from \eqref{com:odecetene} that
$$
( \ell - 1) \# G_1 = k(n) \ell \frac{\#G_1}{2}.
$$
The only solution is $\ell = 2$ and $k(n) = 1$ for all $n$.
Thus, we obtain the following condition
$$
\# G_1 = \frac{1}{2} \# G = \sum_{\Theta \in G^{(2)} \setminus G_1^{(2)}} \Bigl(\P_{\Theta}(n) + \P_{\Theta}(n+1)\Bigr)
\quad \text{ for all $n \geq N$.}
$$
Indeed, these conditions are satisfied for the three subgroups $H_0$, $H_1$ and $H_2$ of the group $H$ and the word $\vv$, see the last part of \Cref{sec:ex_v}.

Further characterization of such group and examples of such infinite words is an open problem.

\end{itemize}

\section*{Acknowledgments}

We would like to express our gratitude to the anonymous referee of this articles.
His or her review helped us to improve the presentation and also remove some flaws concerning Coxeter groups.
This work was supported by the Czech Science Foundation grants GA\v
CR 201/09/0584, 13-03538S, 13-35273P
and by the grant of the Grant Agency of the Czech Technical
University in Prague grant No. SGS11/162/OHK4/3T/14.

\end{document}